\newtheorem{hypothesis}[definition]{Hypothesis}
\renewcommand{\1}{{{\mathchoice {\rm 1\mskip-4mu l} {\rm 1\mskip-4mu l}
{\rm 1\mskip-4.5mu l} {\rm 1\mskip-5mu l}}}}
\newcommand{\C}{{\mathbb{C}}}
\newcommand{\N}{{\mathbb{N}}}
\newcommand{\R}{{\mathbb{R}}}
\newcommand{\Z}{{\mathbb{Z}}}
\newcommand{\Bb}{{\mathcal{B}}}
\newcommand{\Dd}{{\mathcal{D}}}
\newcommand{\Gg}{{\mathcal{G}}}   
\newcommand{\Ll}{{\mathcal{L}}}   
\newcommand{\Oo}{{\mathcal{O}}}
\newcommand{\Ss}{{\mathcal{S}}}
\newcommand{\Uu}{{\mathcal{U}}}
\newcommand{\Vv}{{\mathcal{V}}}
\newcommand{\range}{{\rm range\, }}  
\newcommand{\IND}{{\rm ind}}       
\newcommand{\grad}{{\rm grad }}    
\newcommand{\Ho}{{\rm H}}             
\newcommand{\norm}{{\rm norm}}
\newcommand{\eps}{{\varepsilon}}
\def\NABLA#1{{\mathop{\nabla\kern-.5ex\lower1ex\hbox{$#1$}}}}
\def\Nabla#1{\nabla\kern-.5ex{}_{#1}}
\def\Tabla#1{\Tilde\nabla\kern-.5ex{}_{#1}}
\def\abs#1{\mathopen|#1\mathclose|}   
\def\Abs#1{\left|#1\right|}            
\def\norm#1{\mathopen\|#1\mathclose\|}
\def\Norm#1{\left\|#1\right\|}
\renewcommand{\Tilde}{\widetilde}
\newcommand{\p}{{\partial}}
\begin{document}

\title{A backward $\lambda$-Lemma for the forward heat
  flow\thanks{Research supported by Universit\"at Bielefeld and
    Funda\c{c}\~{a}o de Amparo \`{a} Pesquisa do Estado de S\~{a}o
    Paulo, FAPESP grants 2011/01830-1 and
    2013/20912-4}}

\subtitle{Dedicated to the memory of V.I.~Arnol'd}

\titlerunning{Backward $\lambda$-Lemma}

\author{Joa Weber 
}


\institute{Joa Weber \at
                IMECC UNICAMP,
                Rua S\'{e}rgio Buarque de Holanda 651,
                CEP 13083-859, Campinas-SP, Brasil
              \\
              \email{joa@math.sunysb.edu}           
}

\date{Submitted: 14 December 2012 
         / Revised: 07 February 2014 / Accepted: February 2014}

\maketitle

\begin{abstract}
The inclination or $\lambda$-Lemma is a
fundamental tool in finite dimensional
hyperbolic dynamics.
In contrast to finite dimension, we consider the forward
semi-flow on the loop space of a closed Riemannian manifold
$M$ provided by the heat flow.
The main result is a backward
$\lambda$-Lemma for the heat flow near a hyperbolic
fixed point $x$. There are the following novelties.
Firstly, infinite versus finite dimension.
Secondly, semi-flow versus flow.
Thirdly, suitable adaption provides a new proof
in the finite dimensional case.
Fourthly and a~priori most surprisingly,
our $\lambda$-Lemma moves the given disk transversal
to the unstable manifold backward in time, although
there is no backward flow.
As a first application we propose
a new method to calculate
the Conley homotopy~index~of~$x$.

\subclass{37L05 \and 35K05}

\end{abstract}

\section{Introduction and main results}
Assume $M$ is a closed smooth manifold of dimension
$n\ge 1$ equipped
with a Riemannian metric $g$ and the Levi-Civita
connection $\nabla$. Throughout smooth means
$C^\infty$ smooth.
The {\bf loop space} is the Hilbert manifold
$\Lambda M:=W^{1,2}(S^1,M)$ of absolutely continuous
loops in $M$ with square integrable derivative.
We identify
$S^1=\R/\Z$ and think of $\gamma\in\Lambda M$
as a map $\gamma:\R\to M$ that satisfies
$\gamma(t+1)=\gamma(t)$. Pick a smooth function
$V:S^1\times M\to \R$ and set $V_t(q):=V(t,q)$.

Consider the {\bf heat equation}
\begin{equation}\label{eq:heat}
   \p_su - \Nabla{t}\p_tu - \grad V_t(u) = 0
\end{equation}
for smooth maps
$\R\times S^1\to M:(s,t)\mapsto u(s,t)$.
It is well known that the corresponding Cauchy problem
for the map $[0,\infty)\to\Lambda M:s\mapsto u_s:=u(s,\cdot)$
admits a unique solution.
The associated forward semi-flow $\varphi$ on $\Lambda M$
is called the {\bf heat flow}.
It is a continuous map
\begin{equation*}
     \varphi:[0,\infty)\times\Lambda M\to\Lambda M
\end{equation*}
which is of class $C^1$ on $(0,\infty)$;
see~(\ref{eq:cauchy-local}) for its
representative $\phi$ in local coordinates.
For $\gamma\in\Lambda M$ abbreviate
$\dot\gamma=\frac{d}{dt}\gamma$.
Because~(\ref{eq:heat}) is the downward
$L^2$ gradient equation of the {\bf action functional}
$\Ss_V:\Lambda M\to\R$ given by
$$
     \Ss_V(\gamma)=\int_0^1\left(\frac12\abs{\dot\gamma (t)}^2
     -V_t(\gamma(t))\right) dt,
$$
the fixed points of the heat flow are
the critical points of the action. The latter
are (perturbed) closed geodesics, that is solutions
$x:S^1\to M$ to the second order ODE
$$-\Nabla{t}\dot x-\nabla V_t(x)=0.$$
By $\IND(x)$ we denote the Morse index of $x$.
{\bf Nondegeneracy} of the critical point $x$,
that is nondegeneracy of the Hessian of $\Ss_V$ at $x$,
corresponds to hyperbolicity of the fixed point $x$.

Fix a nondegenerate critical point $x$ of the action
$\Ss_V$. While the
{\bf\boldmath stable manifold $W^s(x)$} is
defined in the usual way as the set of all points which
flow in forward time asymptotically into $x$,
it is of infinite dimension. The fact that $W^s(x)$ is
\emph{globally} embedded
is, firstly, remarkable since the standard method
of pulling back coordinates near $x$ using the
backward flow is obviously not available. Secondly,
except for~\cite[Thm.~6.1.9]{Henry-81-GeomTheory}
this fact is usually not mentioned at all in the literature---unlike
the widely known \emph{local} submanifold property near
$x$; see Section~\ref{sec:loc-stab-mf}.
In contrast, without a backward flow the definition of the
{\bf\boldmath unstable manifold $W^u(x)$}
becomes somewhat awkward: It is the set of
endpoints of all heat flow trajectories parametrized by
$(-\infty,0]$ and emanating at time $-\infty$ from $x$.
On the other hand, this definition lends itself
to define a backward flow on $W^u(x)$ which immediately
implies that the unstable manifold is globally embedded.
Most importantly, its dimension given by $\IND(x)$ is finite;
see e.g.~\cite{Joa-HEATMORSE-II}.
\emph{It is this finite dimensionality which is one of two pillars on which this paper is based.}
A key consequence is smoothness of every
$\gamma\in W^u(x)$; see Remark~\ref{rem:mixed-Cauchy}.

\subsection{Some history}
In finite dimensional hyperbolic dynamics
there are two fundamental tools:
The Grobman-Hartman
Theorem~\cite{Grobman-59,Hartman-60} and 
the $\lambda$-Lemma~\cite{Palis-MS}.
While the first is powerful concerning topological
questions the latter reigns in the differentiable world.
It even implies the former.
The {\bf\boldmath$\lambda$-Lemma}
asserts that the backward flow applied
to any disk $\Dd$ transversal to
the unstable manifold and of complementary dimension
converges in $C^1$ to the local stable manifold, see
Figure~\ref{fig:fig-stable-fibr-global},
and similarly for the forward flow.
For a beautiful presentation
see~\cite{Palis-deMelo}.
Since convergence is in $C^1$, the $\lambda$-Lemma
is also called {\bf inclination lemma}. 

\emph{The second pillar on which this paper is based is the replacement of the absent backward flow on the loop space by the family of preimages $s\mapsto{\varphi_s}^{-1}\Dd$.}
This idea was born when we attempted 
to construct a Morse filtration of the loop space
using the method of Abbondandolo and 
Majer~\cite{AM-LecsMCInfDimMfs}.
Their construction builds on
open sets being mapped to open sets under a forward
flow. But this is not true for $\varphi_s$---from a
topological point of view the heat semi-flow is useless!
The way out was the simple observation
that preimages of open sets are open
by continuity of $\varphi_s$.
Unfortunately, still
the Abbondandolo-Majer method would
not apply, because things were moving in the
wrong direction now. However, the definition given
in~\cite[proof of Lemma~3.2]{Dietmar-BLMS}
in finite dimensions
carries over providing a Conley pair $(N,L)$ for the
semi-flow invariant set given by the critical point $x$.
Now the backward $\lambda$-Lemma enters. 
In~\cite{Joa-JOAOPESSOA,Joa-CONLEY} we use it
to define an invariant stable foliation of $(N,L)$
which is a fundamental ingredient in our construction
of a Morse filtration of $\Lambda M$
by open semi-flow invariant sets. In
Section~\ref{sec:first-application} we
discuss the key calculation.
\\
In other words, we were led to discover
the backward $\lambda$-Lemma through
the attempt to solve a very different problem---thereby
reconfirming a major principle advocated
by V.I.~Arnol'd throughout his mathematical life.
\begin{figure}
  \includegraphics{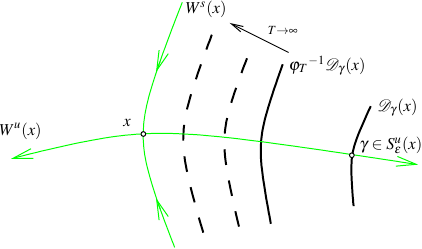}
  \caption{Preimage of disk $\Dd_\gamma(x)$ converges
           in $C^1$ and locally near
           $x$ to the stable manifold $W^s(x)$
           }
  \label{fig:fig-stable-fibr-global}
\end{figure}

\subsection{Main results}
Assume $\Dd_\gamma(x)$ is a disk in the Hilbert
manifold $\Lambda M=W^{1,2}(S^1,M)$
which intersects the
unstable manifold $W^u(x)$ transversally in a point
$\gamma$ near $x$. Our main
goal is to prove that the preimage
${\varphi_T}^{-1}\Dd_\gamma(x)$ converges, as
$T\to\infty$, uniformly in $C^1$ and locally near $x$ to
the stable manifold $W^s(x)$; see
Figure~\ref{fig:fig-stable-fibr-global}. In fact, we
prove right away a family version where $\Dd$ is fibered
over a descending sphere
$S^u_\eps(x)=W^u(x)\cap\{\Ss_V=\Ss_V(x)-\eps\}$.

Since the $\lambda$-Lemma is a local result
we choose a local parametrization
\begin{equation*}
      \Phi
      :=\exp_x:
      X 
      \to\Lambda M
      ,\qquad
      X=T_x \Lambda M=W^{1,2}(S^1,x^*TM),
\end{equation*}
of an open neighborhood of $x$ in $\Lambda M$
in terms of the exponential map; here compactness of
$M$ enters. The orthogonal splitting
\begin{equation*}
     X
     \simeq T_x W^u(x)\oplus T_x W^s(x)
     =X^-\oplus X^+
\end{equation*}
with associated orthogonal projections $\pi_\pm$
is a key ingredient to make the analysis work;
at this stage take the final identity as a definition.
By a standard graph argument we
may assume without loss
of generality that $\Uu$ is of the form $W^u\times \Oo^+$.
Here $W^u\subset X^-$ represents
a descending disk
$W^u(x)\cap\{\Ss_\Vv>\Ss_\Vv(x)-\delta\}$
for some $\delta>0$ sufficiently small
and $\Oo^+\subset X^+$ is an open ball about $0$.
By $\phi$ we denote the local semi-flow on $\Uu$
which represents the heat flow $\varphi$ on $\Lambda M$
with respect to the local
parametrization $\Phi$;
see~(\ref{eq:cauchy-local}).

\begin{hypothesis}[Local setup---Figure~\ref{fig:fig-local-setup}]\label{hyp:loc-coord-lambda}
Fix a perturbation $V\in C^\infty(S^1\times M)$
and a nondegenerate critical point $x$ of $\Ss_V$
of Morse index $k$.
\\
(a)~Consider the coordinates on $\Lambda M$
provided by $\Phi$ and modelled on the open
subset $\Uu=W^u\times \Oo^+$ of $X$.
In these coordinates the origin $0\in X$ represents
$x$ and $\Ss:=\Ss_V\circ\Phi^{-1}$ represents the
action. We denote closed radius $R$ balls about $0$ by
$$
     \Bb_R
     :=\{\Norm{\cdot}_X\le R\}
     ,\qquad
     \Bb^+_R
     :=\{\Norm{\cdot}_{X^+}\le R\}.
$$
Choose the constant $\rho_0>0$ in the Lipschitz
Lemma~\ref{le:f} smaller, if necessary, such that
$\Bb_{\rho_0}\subset\Uu$.
Pick a sufficiently small constant $\eps_0>0$
such that for each $\eps\in(0,\eps_0]$ the descending
and ascending disks
$$
     W^u_\eps(x)
     :=W^u(x)\cap\{\Ss_V>\Ss_V(x)-\eps\}
     ,\quad
     W^s_\eps(x)
     :=W^s(x)\cap\{\Ss_V<\Ss_V(x)+\eps\},
$$
are contained in the coordinate patch
$\Phi(\Bb_{\rho_0})$ and such that their closures are
diffeomorphic to the closed unit disks in $\R^k$
and $X^+$, respectively.
Existence of $\eps_0$ follows by the Morse-
and the Palais-Morse lemma.
\\
(b)~Fix $\mu\in(0,d)$ in the spectral gap~(\ref{eq:mu})
of the Jacobi operator. Pick $\varkappa\in(0,\rho_0)$ so small
that $\Dd:=S^u_\eps\times \Bb^+_\varkappa$ is
contained in $\Bb_{\rho_0}$ and set
$\Dd_\gamma:=\{\gamma\}\times\Bb^+_\varkappa$.
\\
(c)~Our notation for objects expressed in coordinates
will be the global notation with $x$ omitted,
for example $W^s_\eps$ and $\Dd_\gamma$.
\end{hypothesis}

\begin{theorem}[Backward $\lambda$-Lemma]
\label{thm:backward-lambda-Lemma}
Assume the local setup of
Hypothesis~\ref{hyp:loc-coord-lambda}.
In particular, consider the hyperbolic fixed point
$0$ of the local semi-flow $\phi$
defined by~(\ref{eq:cauchy-local})  on $\Uu\subset X$
and the hypersurface 
$\Dd=S^u_\eps\times\Bb^+_\varkappa\subset\Bb_{\rho_0}\subset\Uu$;
see Figure~\ref{fig:fig-lambda-Lemma}.
Then the following is true.
There is a closed ball $\Bb^+\subset X^+$
of radius $r>0$ about zero, a constant
$T_0>0$, and a Lipschitz continuous map
\begin{equation*}
\begin{split}
     \Gg:(T_0,\infty)\times S^u_\eps\times\Bb^+
    &\to W^u\times\Bb^+\subset\Uu
   \\
     (T,\gamma,z_+)
    &\mapsto 
     \left(G^T_\gamma(z_+),z_+\right)
     =:\Gg^T_\gamma(z_+)
\end{split}
\end{equation*}
of class $C^1$ and defined by~(\ref{eq:G^T}).
Each map $\Gg^T_\gamma:\Bb^+\to X$
is bi-Lipschitz, a diffeomorphism onto its image, and
$\Gg^T_\gamma(0)=\phi_{-T}\gamma=:\gamma_T$.
The graph of $G^T_\gamma$ consists of those $z\in \Uu$
which satisfy $\pi_+ z\in \Bb^+$ and reach the fiber 
$\Dd_\gamma=\{\gamma\}\times\Bb^+_\varkappa$
at time $T$, that is
$$
     \Gg^T_\gamma(\Bb^+)
     ={\phi_T}^{-1}
     \Dd_\gamma\cap
     \left( X^-\times\Bb^+\right).
$$
Furthermore, the graph map $\Gg^T_\gamma$ converges
uniformly, as $T\to\infty$, to the stable manifold
graph map $\Gg^\infty$ of 
Theorem~\ref{thm:local-stable-manifold-graph}.
More precisely, it holds that~\footnote{
  Note that the difference lies in $X^-$, hence in $C^\infty$.
  Therefore it makes sense to take the $C^1$
  norm.
  }
\begin{equation*}
\begin{split}
     \Norm{\Gg^T_\gamma(z_+)-\Gg^\infty(z_+)}_{C^1(S^1)}
     \le \rho_0 e^{-T\frac{\mu}{16}}
\end{split}
\end{equation*}
for all $T>T_0$, $\gamma\in S^u_\eps$, and $z_+\in\Bb^+$.
\end{theorem}

\begin{figure}
  \includegraphics{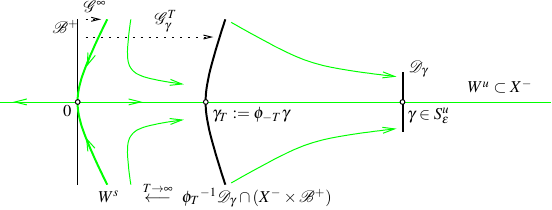}
  \caption{Backward $\lambda$-Lemma}
  \label{fig:fig-lambda-Lemma}
\end{figure}

\begin{theorem}
[Uniform $C^1$ convergence on $X$]
\label{thm:uniform-C1}
Under the assumptions of
Theorem~\ref{thm:backward-lambda-Lemma}
the linearized graph maps
$d\Gg^T_\gamma(z_+):X^+\to X$
extend to bounded linear operators
on the $L^2$ completions
and their limit, as $T\to\infty$, is
$d\Gg^\infty(z_+)$, uniformly in $z_+\in\Bb^+$.
More precisely,
$$
     \Norm{d\Gg^T_\gamma(z_+)
     v}_2
     \le 2 \Norm{v}_2
$$
and
$$
     \Norm{d\Gg^T_\gamma(z_+)v-d\Gg^\infty(z_+)v}_2
     \le e^{-T\frac{\mu}{16}}\Norm{v}_2
$$
for all $T>T_0$,  $\gamma\in S^u_\eps$,
$z_+\in\Bb^+$, and $v$ in the $L^2$ closure of $X^+$.
\end{theorem}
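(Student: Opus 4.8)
The plan is to differentiate the fixed point problem defining the graph map $\Gg^T_\gamma$ and to rerun the contraction argument one level up, now on an $L^2$-based space of paths. Recall from the proof of theorem~\ref{thm:backward-lambda-lemma} that the component $G^T_\gamma(z_+)\in X^-$ is characterized, through the variation of constants (Duhamel) formula for~(\ref{eq:heat}) split along $X=X^-\oplus X^+$, as the unique fixed point $z=z(T,\gamma,z_+)$ of a uniform contraction $\Psi^T_{\gamma,z_+}$ acting on paths $[0,T]\to X$ whose $X^+$-component at time $0$ is $z_+$ and which reach the fiber $\Dd_\gamma$ at time $T$. The operator $\Psi^T_{\gamma,z_+}$ is assembled from the linear semigroup $e^{-sA}$ generated by the Jacobi operator $A$ at $x$ (split into its stable and unstable parts) and from the quadratic remainder $f$ of the Lipschitz lemma~\ref{le:f}, which satisfies $f(0)=0$ and $df(0)=0$. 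By the spectral gap, $e^{-sA}$ contracts at rate $\mu$ forward in time on $X^+$ and backward in time on $X^-$, and parabolic smoothing gives $\Norm{e^{-sA}}_{\Ll(L^2,X)}\lesssim s^{-1/2}$ with $s^{-1/2}$ integrable at $s=0$. Shrinking $\rho_0$ and $r$ in hypothesis~\ref{hyp:loc-coord-lambda} makes the Lipschitz constant of $f$ so small that $\Psi^T_{\gamma,z_+}$ is a $\tfrac12$-contraction, uniformly in $(T,\gamma,z_+)$; the same holds with $T=\infty$ for the stable graph map $G^\infty$ of theorem~\ref{thm:local-stable-manifold-graph}.

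First I would differentiate the identity $z=\Psi^T_{\gamma,z_+}(z)$ with respect to $z_+$. Since $\Psi^T_{\gamma,z_+}$ is smooth in its data and uniformly contracting, the implicit function theorem shows that $z_+\mapsto G^T_\gamma(z_+)$ is $C^1$ and that $v\mapsto dG^T_\gamma(z_+)v$ is the unique fixed point of the linearized operator $D\Psi^T$; this operator is affine in $v$, its linear part is again a $\tfrac12$-contraction built from $e^{-sA}$ and $df$, and its inhomogeneous term is the boundary path $s\mapsto e^{-sA}\pi_+v$. Because both the linear part and the inhomogeneity only ever apply $e^{-sA}$ to $L^2$ data, $D\Psi^T$ extends to a bounded operator on the $L^2$ completion of the path space; this is precisely the source of the ``$L^2$ completions'' in the statement. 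Summing the resulting Neumann series, and shrinking the radii once more so that the geometric-series constant stays in range, yields $\Norm{dG^T_\gamma(z_+)v}_2\le\Norm{v}_2$, hence $\Norm{d\Gg^T_\gamma(z_+)v}_2\le 2\Norm{v}_2$, for all $T>T_0$, $z_+\in\Bb^+$ and $v$ in the $L^2$ closure of $X^+$; the same bound holds for $d\Gg^\infty$.

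For the convergence estimate I would put $w^T(z_+):=d\Gg^T_\gamma(z_+)-d\Gg^\infty(z_+)$ and subtract the two linearized fixed point identities, obtaining $w^T=D\Psi^\infty(w^T)+R^T$. The inhomogeneity $R^T$ collects two effects. First, one replaces the coefficient $df$ evaluated along $\Gg^\infty(z_+)$ by the coefficient along $\Gg^T_\gamma(z_+)$; by the Lipschitz bound on $df$, the $C^0$ estimate $\Norm{\Gg^T_\gamma(z_+)-\Gg^\infty(z_+)}_X\le e^{-T\mu/4}$ of theorem~\ref{thm:backward-lambda-lemma}, and the bound $\Norm{d\Gg^T_\gamma(z_+)v}_2\le 2\Norm{v}_2$, this costs at most $\const\cdot e^{-T\mu/4}\Norm{v}_2$. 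Second, the finite-time problem carries a terminal condition at $s=T$ while the limiting one carries a decay condition at $s=\infty$, and the discrepancy between them is a tail term supported in $[T,\infty)$ together with a terminal boundary term, both of which decay like $e^{-\mu T}$ by the rate-$\mu$ dichotomy of $e^{-sA}$. Hence $\Norm{R^T}_2\le \tfrac32 c\,e^{-T\mu/4}\Norm{v}_2$ for $T_0$ large, and the $\tfrac12$-contraction property of $D\Psi^\infty$ gives $\Norm{w^T}_2\le 2\Norm{R^T}_2\le 3c\,e^{-T\mu/4}\Norm{v}_2$; uniformity in $z_+$ and $\gamma$ is automatic, since every constant above was chosen independently of them. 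In particular $w^T\to 0$ uniformly as $T\to\infty$, which is the asserted uniform $C^1$-convergence.

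The main obstacle is the step ``$D\Psi^T$ is a $\tfrac12$-contraction on the $L^2$ path space'': one has to combine the short-time smoothing $\Norm{e^{-sA}}_{\Ll(L^2,X)}\lesssim s^{-1/2}$ with the rate-$\mu$ exponential dichotomy of $e^{-sA}$ along $X=X^-\oplus X^+$ so that the relevant convolution kernels --- of the schematic shape $\int_0^s(s-\sigma)^{-1/2}e^{-\mu(s-\sigma)}\,d\sigma$ on the stable side and $\int_s^T(\sigma-s)^{-1/2}e^{-\mu(\sigma-s)}\,d\sigma$ on the unstable side --- are finite and, multiplied by the small Lipschitz constant of $f$, remain $\le\tfrac12$ uniformly in $T$; one must also check that $D\Psi^T$ converges strongly, as $T\to\infty$, to $D\Psi^\infty$, so that the limiting fixed point really is $d\Gg^\infty$. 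The remaining points --- that differentiation commutes with the Duhamel integral, that the image of $\Gg^T_\gamma$ lies automatically in $W^{1,2}$ so that $d\Gg^T_\gamma(z_+)$ is genuinely defined on all of $L^2$, and the bookkeeping that produces the constants $2$ and $3c$ --- are routine once this estimate is in hand.
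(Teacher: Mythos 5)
Your outline matches the paper's in broad strokes (linearize the fixed point identity in $z_+$, bound the linearization using only $\Norm{v}_2$, then subtract the finite-$T$ and $T=\infty$ linearized equations and use $C^0$ convergence plus the $\kappa_*$-estimate of lemma~\ref{le:f}), but two points do not go through as you state them. First, there is no contraction ``on an $L^2$-based space of paths'': the coefficient $df|_{\xi(\sigma)}$ of the linearized equation is only defined, and only Lipschitz, on $W^{1,2p}$ (lemma~\ref{le:f}, corollary~\ref{cor:f} map $W^{1,2p}\to L^p$), so the iteration must be run on $X$-valued paths. The mechanism that makes the $L^2$ extension work is a \emph{weighted} norm such as $\sup_{s}s^{\frac12}e^{s\frac{\mu}{2}}\Norm{X_v(s)}_X$, whose $s^{\frac12}$ weight absorbs the singularity $\Norm{e^{-sA}\pi_+}_{\Ll(L^2,W^{1,2})}\le cs^{-\frac12}e^{-s\mu}$ created by feeding the merely-$L^2$ datum $v$ into the semigroup; moreover with $p=1$ the relevant kernels carry $(s-\sigma)^{-\frac34}$ (from $\Ll(L^1,W^{1,2})$) together with the weight ratio $(s/\sigma)^{\frac12}$, and their boundedness needs a split of the domain as in~(\ref{eq:3/4}), not just finiteness of $\int_0^s(s-\sigma)^{-\frac12}e^{-\mu(s-\sigma)}d\sigma$. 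This part is repairable along the lines you hint at, but as written the space and kernels are not the right ones.

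The genuine gap is in your convergence step. You bound the coefficient change $df|_{\xi^T(\sigma)}-df|_{\eta_{z_+}(\sigma)}$ by the theorem-level estimate $\Norm{\Gg^T_\gamma(z_+)-\Gg^\infty(z_+)}_X\le e^{-T\frac{\mu}{4}}$, but that estimate controls only the difference of the two trajectories at time $s=0$, whereas your remainder $R^T$ involves $\Norm{\xi^T(\sigma)-\eta(\sigma)}_X$ for \emph{all} $\sigma$ in the domain of integration. This difference is \emph{not} exponentially small on the second half of $[0,T]$: near $\sigma=T$ the trajectory $\xi^T$ ends on the fiber $\Dd_\gamma$ with $\gamma\in S^u_\eps$, while $\eta$ is exponentially close to $0$, so the coefficient change there is of order one and your bound $\Norm{R^T}\le\tfrac32 c\,e^{-T\frac{\mu}{4}}\Norm{v}_2$ is unjustified. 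What is actually needed is the trajectory-level estimate~(\ref{eq:xi-eta}) from step~6 of the proof of theorem~\ref{thm:backward-lambda-lemma}, which gives $\Norm{\eta-\xi^T}_{C^0([0,\frac{T}{2}],X)}\le 4c\rho_0e^{-T\frac{\mu}{4}}$ only on $[0,\tfrac12 T]$, combined with a time splitting: on $[0,\tfrac12 T]$ one uses~(\ref{eq:xi-eta}) and the $\kappa_*$-Lipschitz estimate for $df$, while on $[\tfrac12 T,T]$ and $[\tfrac12 T,\infty)$ one abandons the comparison of coefficients altogether and instead exploits the decay $\Norm{X_v(\sigma)}_X\le 2c\sigma^{-\frac12}e^{-\sigma\frac{\mu}{2}}\Norm{v}_2$ from the weighted bound (and its $T=\infty$ analogue for $Y_v$), which yields the factor $e^{-T\frac{\mu}{4}}$ precisely because $\sigma\ge\tfrac12 T$ there; a weighted seminorm $\sup_{s\in[0,\frac12 T]}s^{\frac12}\Norm{X_v(s)-Y_v(s)}_X$ is then needed to close the resulting inequality. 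Without the time decomposition and the trajectory-level (not time-$0$) comparison, the contraction step for $w^T$ does not close.
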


\begin{remark}\label{rem:general-p}
(i)~The reason why the $W^{1,2}$ norm
has been replaced by $C^1$ ($\hookleftarrow W^{1,4}$) in
Theorem~\ref{thm:backward-lambda-Lemma} and by
$L^2$ in Theorem~\ref{thm:uniform-C1}
is the application in Section~\ref{sec:first-application}.
Here the $L^2$ nature of~(\ref{eq:heat}) requires
to estimate the nonlinearity $f$
in~(\ref{eq:cauchy-local}) in the $L^2$ norm.
But $f$ maps $W^{1,4}$ to $L^2$.

(ii)~All results in this paper extend to the more general
class of perturbations satisfying axioms~(V0-V3)
in~\cite{SaJoa-LOOP}; see~\cite{Joa-LOOPSPACE}.
\end{remark}

\begin{remark}
(a)~Theorem~\ref{thm:backward-lambda-Lemma}
recovers the common case of a single disk intersecting
the unstable manifold transversely in one point
$\gamma$ near $0$.
Apply the implicit function theorem to bring
the disk into the normal form $\{\gamma\}\times\Bb^+_\varkappa$.
Observe that $\Gg^T_\gamma$ is defined without
reference to any neighbors of $\gamma$.
To formally get a bundle
$\Dd=S^u_\eps\times\Bb^+_\varkappa$
as in the hypothesis just add disks artificially.
\\
(b)~Theorem~\ref{thm:backward-lambda-Lemma}
for endpoint time $T=\infty$ and radius $\varkappa=0$
recovers two known results.
These appear as extreme cases concerning the radius
$0$ disk bundle $\Dd=S^u_\eps\times\{0\}$.
\begin{enumerate}
\item[I]
{\it Disk of radius $0$ sitting at the origin}:
  In this case $S^u_\eps=\{0\}$, that is
  the disk bundle degenerates to
  just one radius $0$ disk sitting at the origin.
  This recovers the local stable manifold
  Theorem~\ref{thm:local-stable-manifold-graph}
  and inspires the notation $\Gg^\infty$ for the
  stable manifold graph map.
  The preimage ${\phi_T}^{-1}(0)$ for $T=\infty$
  corresponds to the local stable manifold.
\item[II]
{\it Radius $0$ disk bundle sitting at $\infty$}:
  This recovers the stable foliation
  in~\cite{CHOW-LIN-LU}.
  Two points belong to the
  same leaf if under the semi-flow $\phi_s$
  their difference converges exponentially
  to zero, as $s\to\infty$.
  The leaf over $0$ is the local stable manifold.
\end{enumerate}
\end{remark}

\begin{remark}[Mixed Cauchy problem]
\label{rem:mixed-Cauchy}
We motivate why the map $\Gg$ in the backward
$\lambda$-Lemma should exist.
Assume the hypotheses of the backward
$\lambda$-Lemma and fix $T>0$. Each point 
$z$ in the preimage ${\phi_T}^{-1}\Dd_\gamma$
corresponds to a unique semi-flow line $\xi$ such that
$\xi(0)=z$ and $\xi$ hits the fiber
$\Dd_\gamma$ precisely at time $T$, say in the point
$q:=\xi(T)$. Of course, we cannot change the order,
i.e. first choosing an end point $q\in\Dd_\gamma$ and
then determining a semi-flow line $\xi$ with $\xi(T)=q$.
This would amount to solve the Cauchy
problem for the heat equation in \emph{backward} time,
a problem well known to be ill defined in general:
Indeed any non-smooth element
$q\in\Dd_\gamma\subset W^{1,2}$ cannot be reached, since
the point $\xi(T)$ on any heat flow trajectory
$\xi$ is necessarily a smooth loop in $M$---due to the strongly
regularizing effect of the heat flow for $T>0$; see
e.g.~\cite{Joa-LOOPSPACE}.
However, consider the splitting $X=X^-\oplus X^+$
in unstable and stable tangent spaces.
In Section~\ref{sec:splittings} we will see that
each element of $X^-$ is smooth. So specifying
only the $X^-$ part of the endpoint does
not contradict regularity to start with.
The key idea is to introduce the notion
of a {\bf mixed Cauchy problem}: Apart from time
$T$ only the stable part $z_+$ of the initial point
is prescribed---in exchange of prescribing
in addition the unstable part $q_u$
of the end point; see Figure~\ref{fig:fig-mixed-Cauchy}.
Indeed the representation formula in
Proposition~\ref{prop:representation-formula}
shows that the mixed Cauchy problem is equivalent to the
usual Cauchy problem with initial value $z$.
Since the latter admits a unique solution,
so does the mixed Cauchy problem.
\end{remark}

\begin{figure}
  \includegraphics{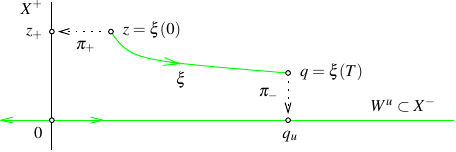}
  \caption{Data $(T,q_u,z_+)$
           for mixed Cauchy problem
           }
  \label{fig:fig-mixed-Cauchy}
\end{figure}

\subsection{Outlook}
To put the backward $\lambda$-Lemma and the
associated stable foliations~\cite{Joa-CONLEY} in perspective
recall the celebrated proof by Palis
in his 1967 PhD thesis~\cite{Palis-PhD}
of Andronov-Pontryagin
structural stability of hyperbolic dynamical systems
in small dimensions.
Key innovations and tools in his proof
were the notions of stable and unstable foliations
which led to numerous applications ever since.

Another interesting perspective of the backward
$\lambda$-Lemma is that, to the best of our
knowledge, it provides the first
\emph{backward} time information
concerning the heat flow---apart from the obvious
backward flow on the (finite dimensional) unstable manifolds.
In fact the backward $\lambda$-Lemma provides
backward time information on \emph{open} subsets.

Going from~(\ref{eq:heat}) and $W^{1,2}$ to general semilinear
parabolic PDEs and $W^{k,p}$ will be investigated elsewhere.

\subsection{Application}
\label{sec:first-application}
The method in~\cite{Joa-JOAOPESSOA,Joa-CONLEY} to construct
a Morse filtration of the loop space
is inspired
by Conley theory~\cite{CONLEY-CBMS}.
For reals $\eps,\tau>0$ denote by
$N$ the path connected component
of $x$ of the open set
$
     \{\gamma\in\Lambda M\mid\text{$\Ss_V(\gamma)
      <\Ss_V(x)+\eps$,
     $\Ss_V(\varphi_\tau \gamma)>\Ss_V(x)-\eps$}\}
$.
For $\eps>0$ small and $\tau>0$ large
the closed subset
$L:=\{\gamma\in N\mid\text{$\Ss_V(\varphi_{2\tau}\gamma)
\le \Ss_V(x)-\eps$}\}$ of $N$ is an {\bf exit set} of $N$
and $(N,L)$ is a {\bf Conley pair} for the
semi-flow invariant set $\{x\}$;
see~\cite{Joa-CONLEY,Joa-LOOPSPACE}.
Concerning the Morse filtration a 
fundamental step is to prove that
relative singular homology is given by
\begin{equation*}
     \Ho_\ell(N,L)
     \simeq
     \begin{cases}
        \Z& ,\ell=k:=\IND(x),
        \\
        0&  \text{, otherwise.}
     \end{cases}
\end{equation*}
Because the part of $N$ in the unstable manifold
is an open $k$ disk bounded by the
(relatively) closed annulus $L\cap W^u(x)$,
the relative homology of these parts
has the required property
and it suffices to show that
$(N\cap W^u(x),L\cap W^u(x))$ is a
deformation retract of $(N,L)$.
Observe that $N$ contains the ascending disk
$W^s_\eps(x)$---from now on abbreviated
$W^s_\eps$---whose part in the unstable manifold
is precisely the critical point $x$ itself.
Thus on $W^s_\eps$ the semi-flow $\varphi_s$ itself
provides the desired deformation.
Obviously this fails on the complement of
the ascending disk.
Now the backward $\lambda$-Lemma comes in.
It naturally endows $(N,L)$, as we show
in~\cite{Joa-CONLEY}, with the structure of a 
codimension $k$ foliation
whose leaves $N(\gamma_u)$ are
parametrized by $\gamma_u\in N\cap W^u(x)$.
Furthermore, each leaf is
diffeomorphic to a neighborhood $U_{\gamma_u}$
of $W^s_\eps$ in $W^s(x)$. The leaf through $x$
is given by $N(x)=W^s_\eps=U_x(W^s_\eps)$.
These diffeomorphisms, denoted by
$$
    \Psi_{\gamma_u}:U_{\gamma_u}(W^s_\eps)
     \stackrel{\simeq\,\,}{\longrightarrow}
     N(\gamma_u)
     ,\qquad
     \Psi_{\gamma_u}(x)=\gamma_u
     ,\qquad
     \Psi_0=id_{W^s_\eps},
$$
allow to extend to all of $N$ the desirable
deformation property provided by $\varphi_s$
on $W^s_\eps$.
Indeed pick $\gamma\in N$. By the foliation
property $\gamma$ lies on some leaf,
say on $N(\gamma_u)$. Now the map
$
     \theta_s(\gamma)
     :=\Psi_{\gamma_u}\circ\varphi_s
     \circ{\Psi_{\gamma_u}}^{-1}(\gamma)
$
for
$
     s\in[0,\infty]
$
deforms $N$ onto its part in the unstable manifold.
So we are done.
Well, note the subtlety arising due to
the deformation having to take place
entirely in $N$ which is equivalent to invariance
of $U_{\gamma_u}(W^s_\eps)$ under $\varphi_s$.
For $U_x(W^s_\eps)=W^s_\eps$ this follows 
immediately from the
fact that the action decreases along the heat flow.
Since $\dim U_{\gamma_u}(W^s_\eps)=\infty$,
the general case is non-trivial.
Apart from the Palais-Smale condition,
the analytic properties of the graph maps
$\Gg^T_\gamma$ provided by
Theorems~\ref {thm:backward-lambda-Lemma}
and~\ref{thm:uniform-C1} enter heavily.
We refer to~\cite{Joa-CONLEY} for details
and to~\cite{Joa-JOAOPESSOA} for a survey.

\section{Toolbox}\label{sec:toolbox}

Throughout we fix a nondegenerate critical point $x$
of $\Ss_V:\Lambda M\to\R$.
Representing the Hessian of
$\Ss_V$ at $x$ with respect to the $L^2$ inner product
on the loop space gives rise to the
{\bf\boldmath Jacobi operator $A_x$} defined by
\begin{equation}\label{eq:Jacobi-1}
     A_x\xi
     =-\Nabla{t}\Nabla{t}\xi-R(\xi,\dot x)\dot x
     -\Nabla{\xi}\nabla V_t(x)
\end{equation}
for every smooth vector field $\xi$ along the loop $x$.
Here $R$ denotes the Riemannian curvature tensor.
Viewed as unbounded operator
on a general Sobolev space
$
     W^{k,q}:=W^{k,q}(S^1,x^*TM)
$
with dense domain $W^{k+2,q}$, where $k\in\N_0$
and $q\ge 1$, the spectrum of $A_x$ does not
depend on $(k,q)$ and takes the
form of a sequence
of real eigenvalues (counting multiplicities)
\begin{equation}\label{eq:spec-A}
     \lambda_1\le\lambda_2\le\ldots\le\lambda_k<0<
     \lambda_{k+1}\le\lambda_{k+2}\le\ldots
\end{equation}
which converges to $\infty$.
Calculation of the spectrum is standard:
One picks the Hilbert space
case $(k,q)=(0,2)$ and proves first that $A_x$ admits
a compact self-adjoint resolvent. In the second step
it remains to prove $C^\infty$ regularity of
eigenfunctions.
The {\bf spectral gap} $(0,d)$ of $A_x$
is determined by
\begin{equation}\label{eq:mu}
     d:=dist\left(0,\sigma(A)\right)
     =\min\{-\lambda_k,\lambda_{k+1}\}>0.
\end{equation}
By $\sigma_\pm$ we denote the {\bf positive} and
{\bf\boldmath negative part of the spectrum of $A_x$}.
Note that nondegeneracy of the critical point $x$
means that zero is not in the spectrum of $A_x$.
Equivalently $x$ is a {\bf hyperbolic} fixed
point of $\varphi_s$ whenever $s>0$, that is the
spectrum of the linearized flow $d\varphi_s(x)$ does
not contain $1$.
The {\bf Morse index} of $x$ is the number $k$
of negative eigenvalues of $A_x$ counted with
multiplicities.

It is worthwhile to mention some of the useful
properties enjoyed by the action functional:
It strictly decreases along non-constant
heat flow trajectories. It is bounded below
and satisfies the Palais-Smale condition.

In the following subsections we provide
the analytical tools required in the proof 
of the backward $\lambda$-Lemma.
Apart from Lemma~\ref{le:f} they are all
well known, surely by the experts,
and so we simply list
them without proofs. On the other hand,
some are difficult to find in the literature,
e.g. sectoriality of $A_x$ in the relevant
periodic case. So here is some
good news for non-experts:

\noindent
{\bf Convention.} {The proof of any assertion
attributed {\bf well known} in
Section~\ref{sec:toolbox}
is given in~\cite{Joa-LOOPSPACE}.
The same holds for facts stated
{\bf without reference}.}

\subsection{Local semi-flow}

Recall that $\Uu=W^u\times \Oo^+$
by Hypothesis~\ref{hyp:loc-coord-lambda}.
Any path $[0,T]\ni s\mapsto u_s$ in the
neighborhood $\Uu(x):=\Phi(\Uu)$ of $x$ in
the Hilbert manifold $\Lambda M$ 
corresponds to a path $\zeta:[0,T]\to \Uu$,
$s\mapsto\zeta(s)$, determined uniquely
by the identity
$
     u_s=\exp_{x}\zeta(s)=:\Phi(\zeta(s))
$
pointwise for $t\in S^1$.
Applying the operators $\p_s$ and $\Nabla{t}\Nabla{t}$
to this identity transforms the Cauchy
problem on $\Lambda M$ associated
to~(\ref{eq:heat}) into the
equivalent Cauchy problem
\begin{equation}\label{eq:cauchy-local}
     \frac{d}{ds}\zeta(s)+A_x\zeta(s)
     =f(\zeta(s)),\qquad
     \zeta(0)=z:=\Phi^{-1}(\gamma)\in\Uu,
\end{equation}
for maps $\zeta:[0,T]\to\Uu\subset X=W^{1,2}(S^1,x^*TM)$
where $T$ depends on $z\in\Uu$ and $A_x$ denotes
the Jacobi operator~(\ref{eq:Jacobi-1}) on $W^{1,2}$
with dense domain $W^{3,2}$.
The solution to~(\ref{eq:cauchy-local})
defines the {\bf local semi-flow} $\phi_sz:=\zeta(s)$
on $\Uu$ that represents the heat flow.
The nonlinearity
$$
     f:X\supset\Uu\to Y
     ,\qquad X=W^{1,2}
     ,\qquad Y=L^1,
$$
actually maps $W^{1,2p}$ to $L^p$
for $p\ge1$ and is given by the identity
\begin{equation}\label{eq:f}
\begin{split}
     f(\zeta)
    &=
     E_2(x,\zeta)^{-1}
     \Bigl(
     E_{11}(x,\zeta)\bigl(\dot x,\dot x\bigr)
     +2E_{12}(x,\zeta)
     \bigl(\dot x,\Nabla{t}\zeta\bigr)
     \\
    &\quad
     +E_{22}(x,\zeta)
     \bigl(\Nabla{t}\zeta,
     \Nabla{t}\zeta\bigr)
     +\nabla V_t(\exp_x\zeta)
     -E_1(x,\zeta)\,\nabla V_t(x)
     \Bigr)
     \\
    &\quad
     -R(\zeta,\dot x)\dot x-\Nabla{\zeta}\nabla V_t(x)
\end{split}
\end{equation}
pointwise at $(s,t)$.
To arrive at this form of $f$ we used the well known
covariant partial derivatives
of the exponential map $E(q,v):=\exp_q v$.
These are multilinear maps
$$
     E_{i_1\dots i_j}(q,v):\left(T_qM\right)^{\times j}
     \to T_{\exp_q v}M
$$
which depend smoothly on $(q,v)\in TM$ for each $j\in\N$.
Those up to order two are characterized by the identities
\begin{equation*}
\begin{split}
     \tfrac{d}{dt}\exp_\gamma(\xi)
    &=E_1(\gamma,\xi)\p_t\gamma
     +E_2(\gamma,\xi)\Nabla{t}\xi
    \\
     \Nabla{t}\left( E_1(\gamma,\xi)\eta\right)
    &=E_{11}(\gamma,\xi)\left(\eta,\p_t \gamma\right)
      +E_{12}(\gamma,\xi)\left(\eta,\Nabla{t}\xi\right)
      +E_1(\gamma,\xi)\Nabla{t}\eta
    \\
     \Nabla{t}\left( E_2(\gamma,\xi)\eta\right)
    &=E_{21}(\gamma,\xi)\left(\eta,\p_t \gamma\right)
      +E_{22}(\gamma,\xi)\left(\eta,\Nabla{t}\xi\right)
      +E_2(\gamma,\xi)\Nabla{t}\eta
\end{split}
\end{equation*}
whenever $\gamma:\R\to M$, $t\mapsto\gamma(t)$,
is a smooth curve in $M$ and $\xi,\eta$ are smooth
vector fields along $\gamma$.
These covariant derivatives satisfy
\begin{equation}\label{eq:E_ij(0)}
     E_1(q,0)=E_2(q,0)=\1
     ,\quad
     E_{11}(q,0)=E_{21}(q,0)=E_{22}(q,0)=0,
\end{equation}
and admit symmetries
\begin{equation*}
     E_{12}(q,z)\left(v,w\right)
     =E_{21}(q,z)\left(w,v\right)
     ,\quad
     E_{22}(q,z)\left(v,w\right)
     =E_{22}(q,z)\left(w,v\right)
     ,
\end{equation*}
and
\begin{equation*}
     E_{11}(q,z)\left(v,w\right)
     -E_{11}(q,z)\left(w,v\right)
     =E_{2}(q,z) R(v,w)z
\end{equation*}
for all $q\in M$ and $z,v,w\in T_qM$.
Furthermore, it holds that
\begin{equation}\label{eq:E112=R}
     E_{112}(\gamma,0)
     \left(\dot\gamma,\dot\gamma,\xi\right)
     :={\textstyle \left.\frac{D}{d\tau}\right|}_{\tau=0}
     E_{11}(\gamma,\tau\xi)
     \left(\dot\gamma,\dot\gamma\right)
     =R(\xi,\dot\gamma)\dot\gamma
\end{equation}
pointwise for every $t\in\R$. For more details
see e.g.~\cite[Appendix]{Joa-LOOPSPACE}.

After all what is the advantage of
reformulating the Cauchy problem?
Obviously the linear structure of $X$ to start with.
However, the really great features
are a)~ the spectral splitting
$X\simeq X^-\oplus X^+$ induced by $A_x$
is preserved by the semigroups of
Section~\ref{sec:splittings} and b)~the part
$X^-$ is of finite dimension $k$
and consists of smooth elements.

\subsection{Lipschitz estimate for the nonlinearity}

\begin{lemma}[Locally Lipschitz]\label{le:f}
   There are constants $\rho_0,\kappa_*>0$
   and a continuous nondecreasing function $\kappa$
   on the interval $[0,\rho_0]$ with $\kappa(0)=0$
   such that the following is true for any constant
   $p\ge 1$. In the Sobolev space
   $W^{1,2p}(S^1,x^*TM)$ consider the
   closed ball $\Bb^{1,2p}_{\rho_0}$ of radius $\rho_0$.
   Then $\Bb^{1,2p}_{\rho_0}\subset\Uu$ and the nonlinearity
   $f:\Bb^{1,2p}_{\rho_0}\to L^p$ given
   by~(\ref{eq:f}) is of class $C^1$ and satisfies
   $f(0)=0$, $df(0)=0$, and
   \begin{equation*}
   \begin{split}
     \Norm{f(\xi)-f(\eta)}_p
    &\le \kappa(\rho)
     \Norm{\xi-\eta}_{1,2p},
    \\
     \Norm{df(\xi)v-df(\eta)v}_p
    &\le \kappa_*
     \Norm{\xi-\eta}_{1,2p}
     \Norm{v}_{1,2p},
   \end{split}
   \end{equation*}
   whenever $\norm{\xi}_{1,2p},\norm{\eta}_{1,2p}
   \le\rho<\rho_0$ and $v\in W^{1,2p}$.
\end{lemma}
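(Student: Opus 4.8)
The plan is to prove the Lipschitz lemma~\ref{le:f} by a direct, term-by-term analysis of the explicit formula~(\ref{eq:f}) for $f$, relying on two structural features: the vanishing identities~(\ref{eq:E_ij(0)}) and the Banach algebra / multiplication properties of Sobolev spaces on the circle. First I would fix $p\ge1$ and shrink $\rho_0$ so that on $\Bb_{\rho_0}\subset W^{1,2p}$ the argument $\exp_x\zeta$ stays in a compact tubular neighborhood of the image of $x$; since $M$ is closed, all the multilinear maps $E_{i_1\dots i_j}(x,\zeta)$ and $\nabla V_t(\exp_x\zeta)$, together with their derivatives in $\zeta$, are then bounded pointwise, uniformly in $t\in S^1$ and in $\zeta\in\Bb_{\rho_0}$, with all bounds depending only on $p$ and the geometry. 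In particular $E_2(x,\zeta)$ is invertible with uniformly bounded inverse after possibly shrinking $\rho_0$ further, using $E_2(x,0)=\1$ and continuity. That $f$ is of class $C^1$ from $W^{1,2p}$ to $L^p$ then follows from the chain rule together with the fact that Nemytskii-type operators built from smooth fiberwise maps act smoothly between these spaces (one uses $W^{1,2p}(S^1)\hookrightarrow C^0$ and that $\Nabla{t}\zeta\in L^{2p}$, so that quadratic expressions like $E_{22}(x,\zeta)(\Nabla{t}\zeta,\Nabla{t}\zeta)$ land in $L^p$ by H\"older).

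The identities $f(0)=0$ and $df(0)=0$ are read off from~(\ref{eq:f}) using~(\ref{eq:E_ij(0)}) and~(\ref{eq:E112=R}): at $\zeta=0$ the curvature terms $E_{11}(x,0)(\dot x,\dot x)$, $E_{12}(x,0)$, $E_{22}(x,0)$ all vanish, $E_1(x,0)=E_2(x,0)=\1$ so the potential terms cancel as $\nabla V_t(x)-\nabla V_t(x)=0$, and the remaining explicit terms $-R(\zeta,\dot x)\dot x-\Nabla{\zeta}\nabla V_t(x)$ vanish at $\zeta=0$; hence $f(0)=0$. Differentiating once and evaluating at $0$, the linear-in-$\zeta$ contributions coming from $\partial_\zeta E_{11}$, $\partial_\zeta E_2$, etc., are designed precisely so that $df(0)$ cancels: concretely the derivative of $E_{11}(x,\zeta)(\dot x,\dot x)$ at $0$ in direction $v$ produces $E_{112}(x,0)(\dot x,\dot x,v)=R(v,\dot x)\dot x$ by~(\ref{eq:E112=R}), which cancels the $-R(\zeta,\dot x)\dot x$ term, and similarly the derivative of the potential piece cancels $-\Nabla{\zeta}\nabla V_t(x)$; this is exactly the statement that $A_x$ in~(\ref{eq:cauchy-local}) absorbs the full linearization of the heat equation at $x$. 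So $df(0)=0$.

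For the two estimates I would expand $f(\xi)-f(\eta)$ as a telescoping sum over the individual summands in~(\ref{eq:f}) and bound each difference. A generic term has the shape $c(\xi)\cdot B(\xi)[a(\xi),b(\xi)]$ where $c$ is a bounded (matrix-valued) factor like $E_2(x,\cdot)^{-1}$ or $E_1(x,\cdot)$, $B$ is a bounded multilinear coefficient, and $a,b$ are either $\partial_t x$ (fixed, smooth, hence bounded in every norm) or $\Nabla{t}\xi\in L^{2p}$. Writing the difference by the standard "add and subtract" trick, each resulting piece is a product of: a difference of two bounded factors evaluated at $\xi$ versus $\eta$, which by the mean value theorem is $O(\norm{\xi-\eta}_{C^0})\le O(\norm{\xi-\eta}_{1,2p})$ since the fiber maps are $C^1$ with derivative bounded on the compact neighborhood; times bounded factors; times at most one factor $\Nabla{t}\xi$ or $\Nabla{t}\eta$ in $L^{2p}$ (each bounded by $\rho$) or a difference $\Nabla{t}(\xi-\eta)$ in $L^{2p}$. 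Collecting via H\"older's inequality $\|fg\|_p\le\|f\|_{2p}\|g\|_{2p}$ (and $\|fg\|_p\le\|f\|_{C^0}\|g\|_p$ for the bounded-factor products) gives $\norm{f(\xi)-f(\eta)}_p\le\kappa(\rho)\norm{\xi-\eta}_{1,2p}$ where $\kappa(\rho)\to0$ as $\rho\to0$ because every surviving term carries either an explicit factor of $\rho$ (from a $\Nabla{t}\xi$ or $\Nabla{t}\eta$ term, or from $\norm{\xi}_{C^0}\le\rho$ in the Taylor remainder of a coefficient that vanishes at $0$) — this is where the vanishing identities~(\ref{eq:E_ij(0)}) are essential, since they force the curvature coefficients to be $O(\rho)$ rather than $O(1)$. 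The second estimate is proved the same way applied to the formula for $df$ obtained by the chain rule, which has the same multiplicative structure but with one extra linear slot for $v$; here the coefficient $\kappa_*$ need not tend to $0$, and one simply takes $\kappa_*$ to be the supremum over $\Bb_{\rho_0}$ of the relevant bounded quantities. I expect the main obstacle to be purely bookkeeping: organizing the large number of terms in $df$ and the telescoping differences so that each one is visibly controlled, while making sure that exactly one $W^{1,2p}$-derivative factor is spent per term so the H\"older exponents close to give an $L^p$ bound — no single estimate is hard, but the combinatorics of~(\ref{eq:f}) and its derivative are the bulk of the work.
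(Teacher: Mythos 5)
Your overall strategy is the paper's: shrink $\rho_0$ via the injectivity radius and the embedding $W^{1,2p}\hookrightarrow C^0$, read off $f(0)=0$ and $df(0)=0$ from~(\ref{eq:E_ij(0)}) and~(\ref{eq:E112=R}), then estimate $f(\xi)-f(\eta)$ and $df(\xi)-df(\eta)$ term by term with the mean value theorem, sup bounds on the $E$-coefficients, and H\"older $\norm{gh}_p\le\norm{g}_{2p}\norm{h}_{2p}$. However, there is a genuine gap in your justification that $\kappa(\rho)\to0$ in the first estimate. You telescope over the \emph{individual} summands of~(\ref{eq:f}) and claim every resulting difference carries a factor of $\rho$ because the coefficients $E_{11},E_{12},E_{22}$ vanish on the zero section. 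That is not true for the curvature and potential groups: the difference $\bigl(E_2(x,\xi)^{-1}E_{11}(x,\xi)-E_2(x,\eta)^{-1}E_{11}(x,\eta)\bigr)(\dot x,\dot x)$ is, by the mean value theorem, controlled by $\sup_{\Oo_\rho}\abs{\p_\zeta(E_2^{-1}E_{11})}\,\abs{\dot x}^2\abs{X}$, and this derivative does \emph{not} become small as $\rho\to0$ — at the zero section it equals $E_{112}(x,0)(\dot x,\dot x,\cdot)=R(\cdot,\dot x)\dot x$ by~(\ref{eq:E112=R}). So this piece, and likewise the difference of $E_2^{-1}\nabla V_t(\exp_x\cdot)$, is only $O(1)\Norm{\xi-\eta}_{1,2p}$; the vanishing of $E_{11}(x,0)$ makes the coefficient itself $O(\rho)$ but not its Lipschitz constant. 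Treating these summands separately therefore yields a Lipschitz constant bounded away from zero, which destroys the crucial property $\kappa(\rho)\to0$ that the contraction arguments later rely on.

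The missing idea is exactly the cancellation you already used to prove $df(0)=0$, carried into the difference estimate: one must group the $E_{11}$-term together with the explicitly subtracted $R(X,\dot x)\dot x$, and the $\nabla V_t(\exp_x\cdot)$-term together with $\Nabla{X}\nabla V_t(x)$, as the paper does in its lines $I$ through $V$, and then Taylor-expand each grouped expression in $X$ around $X=0$. The smallness then comes from quantities such as $\Norm{E_2^{-1}E_{112}(*,*,\cdot)-R(\cdot,*)*}_{L^\infty(\Oo_\rho)}$, which tend to zero with $\rho$ precisely because of~(\ref{eq:E112=R}), not merely because of~(\ref{eq:E_ij(0)}). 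Alternatively, you could bypass the delicate first estimate entirely: once you have the second estimate (your $\kappa_*$ bound, where no smallness is needed) together with $df(0)=0$, the mean value inequality gives $\norm{df(\zeta)v}_p\le\kappa_*\rho\norm{v}_{1,2p}$ on $\Bb_\rho$ and hence the first estimate with $\kappa(\rho):=\kappa_*\rho$, which is continuous, nondecreasing and vanishes at $0$. As written, though, your direct argument for the first estimate would fail on the curvature and potential terms.
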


\begin{corollary}\label{cor:f}
Assume Lemma~\ref{le:f}. Then
$
     \norm{df(\xi)v}_p
     \le \kappa(\rho)
     \norm{v}_{1,2p}
$
whenever $v\in W^{1,2p}$ and
$\norm{\xi}_{1,2p}\le\rho<\rho_0$.
\end{corollary}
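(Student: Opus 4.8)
\emph{Proof sketch.}
The plan is to obtain the pointwise operator bound on $df(\xi)$ directly from the first Lipschitz estimate of Lemma~\ref{le:f}, using the elementary principle that the differential of a $C^1$ map which is Lipschitz on a ball is bounded in operator norm by that ball's Lipschitz constant.

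First I would fix $\xi$ with $\norm{\xi}_{1,2p}\le\rho<\rho_0$ and $v\in W^{1,2p}$, and pick an auxiliary radius $\rho'\in(\rho,\rho_0)$. Since $\rho<\rho'$, there is $t_0>0$ such that $\norm{\xi+tv}_{1,2p}\le\rho'$ for all $|t|\le t_0$. Applying the first estimate of Lemma~\ref{le:f} to the pair $\xi+tv$ and $\xi$, both of which lie in $\Bb_{\rho'}$, gives
\[
   \Norm{f(\xi+tv)-f(\xi)}_p\le\kappa(\rho')\,|t|\,\Norm{v}_{1,2p}
\]
for all $0<|t|\le t_0$.

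Next I would divide by $|t|$ and let $t\to0$. By Lemma~\ref{le:f} the map $f$ is of class $C^1$ on $\Bb_{\rho_0}$, so the difference quotients converge in $L^p$ to $df(\xi)v$; passing to the limit yields $\Norm{df(\xi)v}_p\le\kappa(\rho')\Norm{v}_{1,2p}$. Finally, since $\kappa$ is continuous and nondecreasing on $[0,\rho_0]$, letting $\rho'\downarrow\rho$ produces the claimed bound $\Norm{df(\xi)v}_p\le\kappa(\rho)\Norm{v}_{1,2p}$.

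There is no genuine obstacle here; the only subtlety is that $\xi$ is allowed to lie on the boundary sphere $\norm{\xi}_{1,2p}=\rho$, which is exactly why one first enlarges to the radius $\rho'$ before invoking the Lipschitz bound and then removes the slack at the end via continuity of $\kappa$. Alternatively one could write $df(\xi)v=df(\xi)v-df(0)v$ and apply the second estimate of Lemma~\ref{le:f} with $\eta=0$ together with $df(0)=0$; but this yields the constant $\kappa_*\rho$ rather than the stated $\kappa(\rho)$, so the difference-quotient route is the cleaner one.
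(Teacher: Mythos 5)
Your argument is correct and is essentially the paper's own proof: the paper simply writes $df(\xi)v=\lim_{\tau\to 0}\frac{f(\xi+\tau v)-f(\xi)}{\tau}$ and applies the first Lipschitz estimate of lemma~\ref{le:f}. Your extra step of enlarging to a radius $\rho'\in(\rho,\rho_0)$ and then letting $\rho'\downarrow\rho$ via continuity and monotonicity of $\kappa$ just makes explicit a boundary detail the paper leaves implicit.
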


\begin{proof}
Use that 
$df(\xi)v
=\lim_{\tau\to 0}\frac{f(\xi+\tau v)-f(\xi)}{\tau}$
and apply Lemma~\ref{le:f}.
\qed
\end{proof}

\begin{remark}
(a)~That $\kappa(\rho)$ is 
nondecreasing in $\rho$ is used to prove the assertion
of Theorem~\ref{thm:local-stable-manifold-graph}
that at the fixed point $0$
the stable manifold is tangent to $X^+$.
\\
(b)~The Lipschitz estimate 
for $df$ with constant $\kappa_*$ is required,
firstly, to prove that the graph map $\Gg^T_\gamma$
is of class $C^1$ in $T$ and, secondly, to prove
uniform convergence of its derivative
to the derivative 
of the stable manifold graph map, as 
$T\to\infty$; see proofs of
Theorem~\ref{thm:backward-lambda-Lemma} step~4
and Theorem~\ref{thm:uniform-C1} step~II.
\\
(c)~While Lemma~\ref{le:f} is used mainly in case $p=1$
the fundamental step in Section~\ref{sec:first-application},
carried out in~\cite{Joa-CONLEY},
requires Lemma~\ref{le:f} for $p=2$.
\end{remark}

\begin{proof}[Lipschitz Lemma~\ref{le:f}]
Fix $p\ge1$ and observe that
$\norm{\cdot}_\infty\le c_\infty\norm{\cdot}_{1,2}
\le c_\infty\norm{\cdot}_{1,2p}$ for some constant $c_\infty>0$.
The last step uses H\"older's inequality
and $\mathrm{vol}(S^1)=1$.
By $\iota>0$ we denote the injectivity radius
of the closed Riemannian manifold $M$.
Fix $\rho_0\in(0,\iota/8c_\infty)$ sufficiently small
such that the ball $\Bb_{\rho_0}:=\Bb_{\rho_0}^{1,2}$
in $W^{1,2}$ of radius $\rho_0$
is contained in $\Uu$.
From now on assume that $\rho\in[0,\rho_0)$ and
$\xi,\eta\in W^{1,2p}$ satisfy
$\norm{\xi}_{1,2p},\norm{\eta}_{1,2p}\le\rho$.
Note that $\xi\in\Uu$ and
$\norm{\xi}_\infty\le c_\infty\rho<\frac{\iota}{8}$
and similarly for $\eta$.
So both $\xi$ and $\eta$ take values in
the compact subset $D_\rho\subset TM$
consisting of all pairs $(q,v)$
such that $q\in M$ and $v\in T_qM$ satisfies
$\abs{v}\le 4c_\infty \rho$.
Note that ${D}_0$ is the zero section.

To see that $f(0)=0$ use~(\ref{eq:f})
and~(\ref{eq:E_ij(0)}).
Use in addition~(\ref{eq:E112=R}) to prove that
$df(0)\zeta:=\frac{d}{d\tau}f(\tau\zeta)=0$.
Abbreviate
$
     X:=\eta-\xi
$
to obtain that
\begin{equation*}
\begin{split}
     f(\xi)-f(\eta)
    &=\left(
     E_2(x,\xi)^{-1}
     E_{11}(x,\xi)
     -E_2(x,\eta)^{-1}
     E_{11}(x,\eta)\right)
     \bigl(\dot x,\dot x\bigr)
     +R(X,\dot x)\dot x\\
    &
     +2\left(
     E_2(x,\xi)^{-1}
     E_{21}(x,\xi)\Nabla{t}\xi 
     -E_2(x,\eta)^{-1}
     E_{21}(x,\eta)\Nabla{t}
     \eta\right)
     \dot x\\
    &
     +E_2(x,\xi)^{-1}
     E_{22}(x,\xi)
     \bigl(\Nabla{t}\xi,\Nabla{t}
     \xi\bigr)
     -E_2(x,\eta)^{-1}
     E_{22}(x,\eta)
     \bigl(\Nabla{t}\eta,
     \Nabla{t}\eta\bigr)\\
    &
     +E_2(x,\xi)^{-1}
     \nabla V_t(\exp_{x}\xi)
     -E_2(x,\eta)^{-1}
     \nabla V_t(\exp_{x}\eta)
     +\Nabla{X}\nabla V_t(x)\\
    &
     -\left(E_2(x,\xi)^{-1}
     E_1(x,\xi)
     -E_2(x,\eta)^{-1}
     E_1(x,\eta)\right)
     \nabla V_t(x)
\end{split}
\end{equation*}
pointwise at every $t\in S^1$.
We denote the last five lines
of the formula above by $I$ 
through $V$, respectively,
and deal with each one separately.
For now think of $\xi$
as a fixed parameter and view
$
     \eta(X)=\xi+X
$
as a function of $X$.
Then each line becomes a (smooth) function of $X(t)$
depending on additional quantities such as certain
derivatives of $\xi$, $X$, and $x$ all evaluated at $t$.
For instance, term $I$ becomes the identity
\begin{equation*}
     I(X)
     =\left(
     E_2(x,\xi)^{-1}E_{11}(x,\xi)
     -E_2(x,\eta(X))^{-1}E_{11}(x,\eta(X))
     \right)
     \bigl(\dot x,\dot x\bigr)
     +R(X,\dot x)\dot x
\end{equation*}
pointwise at every $t\in S^1$.
Straighforward calculation shows that
\begin{equation*}
\begin{split}
    &dI(X) Y
     =\left.\tfrac{D}{d\tau}\right|_{\tau=0}
     I(X+\tau  Y)\\
    &=\bigl(E_2(x,\eta(X))\, \bigr)^{-1}
     E_{22}(x,\eta(X))
      \Bigl(E_2(x,\eta(X))^{-1}
      E_{11}(x,\eta(X))
      \left(\dot x,\dot x\right),
      Y\Bigr)\\
    &\quad
     -E_2(x,\eta(X))^{-1}
     E_{112}(x,\eta(X))
     \left(\dot x,\dot x,Y\right)
     +R(Y,\dot x)\dot x
\end{split}
\end{equation*}
pointwise at every $t\in S^1$.
Note that $\eta(\sigma X)=\sigma\eta+(1-\sigma)\xi$,
for $\sigma\in[0,1]$ and pointwise in $t$,
takes values in
${D}_{\rho/2}\subset{D}_\rho\subset{D}_{\rho_0}$.
Note further that $I(0)=0$.
Hence by Taylor's theorem there is
a constant $\sigma=\sigma(t)\in[0,1]$ such that
\begin{equation*}
\begin{split}
     \Abs{I(X)}
     =\Abs{dI(\sigma X) X}
    &\le
     \Norm{{E_2}^{-1}}_{L^\infty({D}_{\rho_0})}^2
     \Norm{E_{22}}_{L^\infty({D}_\rho)}
     \Norm{E_{11}}_{L^\infty({D}_\rho)}
     \Abs{\dot x}^2\Abs{X}
     \\
    &\quad 
     +\Norm{{E_2}^{-1} 
     E_{112}\left(*,*,\cdot\right)
     -R(\cdot,*) *}_{L^\infty({D}_\rho)}
     \Abs{\dot x}^2\Abs{X}
     \\
    &=:\kappa_1(\rho)\Abs{\dot x}^2\Abs{X}
\end{split}
\end{equation*}
pointwise at every $t\in S^1$. 
The function $\kappa_1$
depends continuously on $\rho\in[0,\rho_0]$
and that $\kappa_1(0)=0$ by
the curvature identity~(\ref{eq:E112=R})
and since $E_{ij}(\cdot ,0)=0$ for
$i,j\in\{1,2\}$ by~(\ref{eq:E_ij(0)}). By the a priori
estimate~\cite[Theorem~12]{Joa-HEATMORSE-II}
applied to the constant heat flow trajectory
$u(s)\equiv x$ there is a constant
$C=C(V,\Ss_\Vv(x))$ such that
$\abs{\dot x(t)}\le\norm{\dot x}_\infty\le C$.
By H\"older's inequality
$\norm{gh}_p\le\norm{g}_{2p}\norm{h}_{2p}$ we obtain
the desired Lipschitz estimate for term one, namely
$
     \norm{I(X)}_p
     \le \kappa_1(\rho)\norm{\dot x}_{2p}^2
     \norm{X}_\infty
     \le c_\infty\kappa_1(\rho)\norm{\dot x}_\infty^2
     \norm{\xi-\eta}_{1,2p}
$.
Indeed the constant depends on $\rho$,
but not on $p$.
We did not pull out $\norm{\dot x}_\infty$ right
in the first step in order to illustrate how
H\"older's inequality serves
to deal with first order squares.
The argument for terms two through five is analogous;
see~\cite{Joa-LOOPSPACE} for details. Here first
order squares of the form $\abs{\Nabla{t} X}^2$ appear.

To see that $f$ is of class $C^1$
observe that
$
     df(\xi) X
     =-dI(0)X-\ldots-dV(0) X
$.
Careful inspection term by term then
shows that each of the five terms in this sum
depends continuously on $\xi$
with respect to the $W^{1,2p}$ topology.

It remains to prove the second Lipschitz estimate,
that is the one for the difference of derivatives 
$df(\xi)-df(\eta)$.
Unfortunately, the number of terms appearing
during the calculation is rather large.
Fortunately, we are only claiming existence
of a constant $\kappa_*$.
Straightforward calculation shows that
\begin{equation*}
\begin{split}
     df(\xi)v
    &=
     -R(v,\dot x)\dot x
     -E_2(x,\xi)^{-1}
     \Bigl[
     E_{22}(x,\xi)\left(
     E_2(x,\xi)^{-1}
     E_{11}(x,\xi)\left(\dot x,\dot x\right),
     v\right)
   \\
    &\quad
     -E_{22}(x,\xi)
     \bigl(E_2(x,\xi)^{-1}
     \bigl[
     2E_{12}(x,\xi)\left(\dot x,\Nabla{t}\xi\right)
     +E_{22}(x,\xi)\left(\Nabla{t}\xi,\Nabla{t}\xi\right)
     \bigr]
     ,v\bigr)
   \\
    &\quad
     -E_{22}(x,\xi)\left(
     E_2(x,\xi)^{-1}
     \nabla V_t(\exp_x \xi),
     v\right)
     +E_{22}(x,\xi)\left(\nabla V_t(x),v\right)
   \\
    &\quad
     +E_{112}(x,\xi)\left(\dot x,\dot x,v\right)
     +2E_{122}(x,\xi)\left(\dot x,\Nabla{t}\xi,v\right)
     +2E_{12}(x,\xi)\left(\dot x,\Nabla{t} v\right)
   \\
    &\quad
     +E_{222}(x,\xi)
     \left(\Nabla{t}\xi,\Nabla{t}\xi,v\right)
     +2E_{22}(x,\xi)\left(\Nabla{t}\xi,\Nabla{t} v\right)
   \\
    &\quad
     +{\textstyle\left.\frac{D}{d\tau}\right|_{\tau=0}}
     \nabla V_t(\exp_x (\xi+\tau v))
     -E_{12}(x,\xi)\left(\nabla V_t(x),v\right)
     \Bigr]
     -\Nabla{v}\nabla V_t(x).
\end{split}
\end{equation*}
Denote the fourteen terms in this sum by
$\sum_{j=1}^{14}H_j(\xi)v$. For $X:=\eta-\xi$ set
$2F_j(X)v:=H_j(\xi)v-H_j(\xi+X)v$.
For instance, consider $F_8$. We get that
\begin{equation*}
\begin{split}
    &dF_8(X)\left( v,Y\right)
  \\&=\left.\tfrac{D}{d\tau}\right|_{\tau=0}
     \bigl[
     E_2(x,\xi+X+\tau Y)^{-1}
     E_{122}(x,\xi+X+\tau Y)
     \left(\dot x,\Nabla{t}\left(\xi+X+\tau Y\right)
     ,v\right)\bigr]
  \\&=-{E_2}^{-1}E_{22}\bigl({E_2}^{-1}E_{122}
    \left(\dot x,\Nabla{t}\xi+\Nabla{t} X,v\right)
    ,Y\bigr)
  \\&\quad
     +{E_2}^{-1}E_{1222}
    \left(\dot x,\Nabla{t}\xi+\Nabla{t} X,v,Y\right)
     +{E_2}^{-1}E_{122}
    \left(\dot x,\Nabla{t} Y,v\right)
\end{split}
\end{equation*}
where the maps are evaluated at $(x,\xi+X)$.
Since $F_8(0)=0$ there is by Taylor's theorem,
pointwise at every $t\in S^1$,
a constant $\sigma=\sigma(t)\in[0,1]$ such that
\begin{equation*}
\begin{split}
     \Norm{2F_8(X)v}_p
    &=\Norm{2dF_8(\sigma X)\left( v,X\right)}_p
  \\&\le\Norm{{E_2}^{-1}}_{L^\infty_{\rho_0}}
    \Bigl(
    \Norm{{E_2}^{-1}}_{L^\infty_{\rho_0}}
    \Norm{E_{22}}_{L^\infty_{\rho}}
    \Norm{E_{122}}_{L^\infty_{\rho_0}}
    +\Norm{E_{1222}}_{L^\infty_{\rho_0}}
    \Bigr)
  \\&\quad
    \cdot\Norm{\dot x}_{2p}
    \Bigl(\sigma\Norm{\eta}_{1,2p}
    +(1-\sigma)\Norm{\xi}_{1,2p}\Bigr)
    \Norm{v}_\infty\Norm{\xi-\eta}_\infty
  \\&\quad
    +\Norm{{E_2}^{-1}}_{L^\infty_{\rho_0}}
    \Norm{E_{122}}_{L^\infty_{\rho_0}}
    \Norm{\dot x}_{2p}
    \Norm{\xi-\eta}_{1,2p}
    \Norm{v}_\infty
\end{split}
\end{equation*}
where we abbreviated
$L^\infty_{\rho}:=L^\infty({D}_{\rho})$.
Since $\Norm{\dot x}_{2p}\le\Norm{\dot x}_\infty\le C$
this proves the Lipschitz estimate for term eight.
Note that $F_1\equiv 0\equiv F_{14}$.
The estimates for the other eleven $F$-terms
follow similarly. This proves
the Lipschitz Lemma~\ref{le:f}.
\qed
\end{proof}

\subsection{Semigroups and splittings}
\label{sec:splittings}

For any $q\in[1,\infty)$ and $k\in\N_0$
the negative Jacobi operator
$-A:=-A_x$ on $Z:=W^{k,q}$ with dense domain $W^{k+2,q}$
and given by~(\ref{eq:Jacobi-1})
is sectorial and therefore generates
the strongly continuous semigroup
$e^{-sA}\in\Ll(Z)$ given by
\begin{equation}\label{eq:semigroup}
     e^{-sA}:=\frac{1}{2\pi i}
     \int_{\gamma} e^{s\lambda} R(\lambda,-A)
     \: d\lambda
     ,\qquad
     \forall s>0,
\end{equation}
and by $e^{0A}:=\1_Z$ for $s=0$.
Here $R$ denotes the resolvent and $\gamma:\R\to\C\cup\{\infty\}$
is a suitable loop inside the resolvent set $\rho(-A)$.
Sectoriality of $-A$ is well known, but a proof
for the periodic domain $S^1$ is hard to find,
unlike for the domain $\R$.
So we provide the details in~\cite{Joa-LOOPSPACE}.
By nondegeneracy of $x$ the operator $-A$ is
{\bf hyperbolic}, that is its spectrum and the
imaginary axis $i\R$ are disjoint.
Pick a counter-clockwise oriented circle
$\gamma^+:S^1\to(0,\infty)\times i\R$
which encloses the positive part
$\{-\lambda_k,\ldots,-\lambda_1\}$
of the spectrum of $-A$.
The linear operators
\begin{equation}\label{eq:spec-proj}
     {\pi_-}
     :=\frac{1}{2\pi i}\int_{\gamma^+}
     R(\lambda,-A)\: d\lambda
     ,\qquad
     {\pi_+}:=\1-{\pi_-},
\end{equation}
are elements of $\Ll(Z)$ called {\bf spectral projections},
because $(\pi_\pm)^2=\pi_\pm$.

We collect key facts of semigroup theory.
By boundedness of $\pi_\pm$ the images
\begin{equation}
     Z^\pm:=\range{\pi_\pm},
\end{equation}
are closed (Banach) subspaces.
As a vector
space $Z^-$ is spanned by $k$ eigenfunctions
corresponding to the $k=\IND(x)$ negative eigenvalues
of $A$, in particular $Z^-\subset C^\infty(S^1,x^*TM)$.
In contrast $Z^+$ is the
$W^{k,q}$ closure of the sum of eigenspaces
coresponding to positive eigenvectors of $A$.
Thus $Z^+=Z^+(k,q)$.
The obvious identity $\pi_-\pi_+=\pi_+\pi_-=0$
shows that $$
     Z\simeq Z^-\oplus Z^+.
$$
Moreover, this {\bf splitting} is preserved by $A$
and the restrictions of $A$ to the Banach
subspaces $Z^\pm$ are denoted by $A^\pm$.
Since the semigroup $e^{-sA}$ preserves both subspaces
$Z^\pm$, the restrictions $e^{-sA}|_{Z^\pm}$ are
semigroups as well. They are called {\bf subspace semigroups}.
On the other hand, the restrictions $-A^\pm$
themselves are sectorial operators on the Banach
spaces $Z^\pm$ with dense domains $Z^\pm\cap D(A)$.
Therefore they generate strongly continuous
semigroups $e^{-sA^\pm}$ on $Z^\pm$. But these
coincide with the subspace semigroups 
due to the resolvent identity
$R(\lambda,-A)|_{Z^\pm}=R(\lambda,-A^\pm)$
which holds for every $\lambda$ in the resolvent set 
$\rho(-A)\subset\rho(-A^\pm)$.
The upshot is the formula
$$
     e^{-sA}
     =e^{-sA^-}\oplus e^{-sA^+}
     ,\qquad
     s\ge 0.
$$
Note that $D(A)\cap Z^-=Z^-$ by smoothness.
Thus $A^-\in\Ll(Z^-)$ and the series
\begin{equation}\label{eq:exponential}
     e^{-sA^-}
     :=\sum_{k=0}^\infty
     \frac{(-sA^-)^k}{k!},
     \qquad \forall s\in\R,
\end{equation}
is well defined providing a norm continuous group
which for $s\ge 0$ coincides with the subspace semigroup
$e^{-sA}|_{Z^-}$. For \emph{negative} times $s\le 0$
it decays exponentially
$\norm{e^{-sA^-}}_{\Ll(Z^-)}\le ce^{-s\lambda_k}\le ce^{s\mu}$.
The constructions above ``commute''
with Sobolev embeddings
$W^{\ell,r}\hookrightarrow W^{k,q}$.
For $\pi_\pm$ this is again a consequence of
a resolvent identity.

\begin{proposition}
\label{prop:semigroup-NEW}
   Fix integers $\ell\ge k\ge0$ and constants $r\ge q\ge1$.
   Consider the Jacobi operator $A:=A_x$ on
   $Z:=W^{k,q}$ with
   dense domain $W^{k+2,q}$ and its restrictions $A^\pm$ to the
   closed subspaces $Z^\pm:=\pi_\pm(Z)$.
   Fix $\mu>0$ in the spectral gap~(\ref{eq:mu}) of $A$.
   Then there is a constant $c=c(\ell,k,r,q,\mu)$
   such that
\begin{enumerate}

 \item[\rm (a)]
   The operator $-A$ on $L^1$ generates the strongly
   continuous semigroup $e^{-sA}$ on $L^1$
   given by~(\ref{eq:semigroup}).
   Both commute with the spectral projections
   $\pi_\pm$ in~(\ref{eq:spec-proj}).

\item[\rm (b)]
   The subspace semigroup $e^{-sA}|$ on $Z\subset L^1$
   coincides with the strongly continuous
   semigroup $e^{-sA|}$ generated
   by the restriction $-A|$ of $-A$ to $Z\subset L^1$.
   Restricting the semigroup and restricting $A$ both
   commute with the spectral projections
   which themselves satisfy $\pi_\pm(-A|)=\pi_\pm(-A)|$.
  To simplify notation we omit from now on
  the slash sign $\mid$.

 \item[\rm (c)]
   The restriction of $-A$ to $Z^-$
   generates the norm continuous group
   $e^{-sA^-}$ on $Z^-$ given
   by the exponential series~(\ref{eq:exponential}).
   For positive times this group is equal to
   the restriction of the semigroup $e^{-sA}$ to $Z^-$.
   For negative times it holds that
   \begin{equation} \label{eq:c-NEW}
     \big\|e^{-sA^-}\pi_-\big\|_{\Ll(W^{k,q},W^{\ell,r})}
     \le ce^{s\mu}
     ,\quad
     s\le 0.
   \end{equation}

 \item[\rm (d)]
   Restricting $e^{-sA}$ to $Z^+$ gives a
   strongly continuous semigroup on $Z^+$ and
   \begin{equation}\label{eq:reg-for-sing-NEW}
        \Norm{e^{-sA}\pi_+}_{\Ll(W^{k,q},W^{\ell,r})}
        \le c
        s^{-\frac{1}{2}(\frac{1}{q}-\frac{1}{r}+\ell-k)}
        e^{-s\mu}
     ,\quad
     s>0.
   \end{equation}
\end{enumerate}
\end{proposition}

\subsection{The representation formula}

\begin{proposition}\label{prop:representation-formula}
    Consider the nonlinearity $f:X\supset \Uu\to Y$ 
    given by~(\ref{eq:f})
    and the constant $\rho_0$ provided 
    by the Lipschitz Lemma~\ref{le:f}.
    Pick $T>0$ and
    assume $\xi:[0,T]\to X$ is a
    map bounded by $\rho_0$
    thus taking values in $\Uu$.
    Then the following are equivalent. 
\begin{enumerate}
  \item[\rm (a)] 
    The map $\xi:[0,T]\to Y$ 
    is the (unique) solution 
    of the Cauchy 
    problem~(\ref{eq:cauchy-local})
    with initial value $\xi(0)$.

  \item[\rm (b)]
    The map 
    $\xi:(0,T]\to X$ is continuous~\footnote{hence $f\circ\xi:(0,T]\to Y$ is continuous and, by the Lipschitz Lemma~\ref{le:f}, bounded.}
    and satisfies the integral equation,
    also called {\bf representation formula}, given by
    \begin{equation}\label{eq:representation-formula}
    \begin{split}
      \xi(s) 
     &=e^{-sA}\pi_+ \xi(0)
      +\int_0^s e^{-(s-\sigma)A}
      \pi_+f(\xi(\sigma))\, d\sigma
      \\
     &\quad
      +e^{-(s-T)A^-}\pi_- \xi(T)
      -\int_s^T e^{-(s-\sigma)A^-}
      \pi_-f(\xi(\sigma))\, d\sigma
    \end{split}
    \end{equation}
    for every $s\in[0,T]$.
    In the limit $T\to\infty$ the first term
    in line two disappears.
\end{enumerate}
\end{proposition}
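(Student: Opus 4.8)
\medskip

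\noindent
The plan is to establish the equivalence of (a) and (b) by the classical variation-of-constants argument, adapted to the \emph{split} situation where the semigroup on $X^-$ runs backward from the terminal time $T$ while the semigroup on $X^+$ runs forward from the initial time $0$. The direction (a)$\implies$(b) is the substantive one; (b)$\implies$(a) is essentially a matter of differentiating the integral formula and checking regularity.

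\medskip

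\noindent
\textbf{Step 1: Regularity and the Duhamel formula for the full problem.} Assume $\xi$ solves~(\ref{eq:cauchy-local}) with $\|\xi(s)\|_X\le\rho_0$ for all $s\in[0,T]$. By the Lipschitz lemma~\ref{le:f} the composition $f\circ\xi:[0,T]\to Y=L^p$ is continuous and bounded. Since $-A$ generates the analytic semigroup $e^{-sA}$ on $Y$ (proposition~\ref{prop:semigroup-NEW}) and $\xi$ is a mild solution, standard parabolic regularity upgrades $\xi$ to a continuous map $(0,T]\to X=W^{1,2p}$ --- this uses the smoothing estimate~(\ref{eq:reg-for-sing-NEW}) applied to the $\pi_+$ component together with the fact that on $X^-$ everything is finite-dimensional and smooth. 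This gives the continuity claim in (b). Then, on any subinterval, Duhamel's principle applied to $\frac{d}{ds}\xi+A\xi=f(\xi)$ yields
\begin{equation*}
  \xi(s)=e^{-(s-s_0)A}\xi(s_0)+\int_{s_0}^s e^{-(s-\sigma)A}f(\xi(\sigma))\,d\sigma .
\end{equation*}

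\medskip

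\noindent
\textbf{Step 2: Apply the spectral projections and push the $\pi_-$ part to the terminal time.} Apply $\pi_+$ to the Duhamel formula with $s_0=0$; since $\pi_+$ commutes with $e^{-sA}$ and is bounded (proposition~\ref{prop:semigroup-NEW}(a),(b)), this produces exactly the first line of~(\ref{eq:representation-formula}). For the $\pi_-$ part, use that $e^{-sA^-}$ extends to a \emph{group} on the finite-dimensional space $X^-$ (proposition~\ref{prop:semigroup-NEW}(c)), so that $e^{-(s-T)A^-}$ is meaningful even for $s<T$. Apply $\pi_-$ to the Duhamel formula with $s_0=T$, i.e.\ write
\begin{equation*}
  \pi_-\xi(s)=e^{-(s-T)A^-}\pi_-\xi(T)+\int_T^s e^{-(s-\sigma)A^-}\pi_-f(\xi(\sigma))\,d\sigma ,
\end{equation*}
which is legitimate precisely because the $X^-$ dynamics is invertible; flipping the limits of integration turns this into the second line of~(\ref{eq:representation-formula}) (note the sign). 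Adding the two projected identities and using $\xi=\pi_+\xi+\pi_-\xi$ gives~(\ref{eq:representation-formula}) for every $s\in[0,T]$. The remark that the third term disappears as $T\to\infty$ follows from the exponential decay $\|e^{-sA^-}\pi_-\|\le ce^{s\mu}$ for $s\le 0$ in proposition~\ref{prop:semigroup-NEW}(c), since $s-T\to-\infty$.

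\medskip

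\noindent
\textbf{Step 3: The converse (b)$\implies$(a).} Suppose $\xi:(0,T]\to X$ is continuous and satisfies~(\ref{eq:representation-formula}). Differentiate in $s$: the $\pi_+$-block contributes $-A\pi_+\xi(s)+\pi_+f(\xi(s))$ by the standard computation for mild solutions of analytic semigroups (the boundary term at $\sigma=s$ produces $\pi_+f(\xi(s))$, using continuity and boundedness of $f\circ\xi$ from the footnote), while the $\pi_-$-block is an honest ODE in the finite-dimensional space $X^-$ and contributes $-A\pi_-\xi(s)+\pi_-f(\xi(s))$, again with the boundary term at $\sigma=s$ giving $\pi_-f(\xi(s))$. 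Summing, $\frac{d}{ds}\xi+A\xi=f(\xi)$ on $(0,T)$. Evaluating~(\ref{eq:representation-formula}) at $s=0$ and using $e^{0A}=\1$ and that the integral from $0$ to $0$ vanishes in the first line shows that the initial condition $\xi(0)=\pi_+\xi(0)+\pi_-\xi(0)$ is the prescribed one, so $\xi$ is the Cauchy solution; uniqueness is the standard one for~(\ref{eq:cauchy-local}).

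\medskip

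\noindent
\textbf{Main obstacle.} The only real subtlety is in Step 2: justifying that one may run the $X^-$ part of the flow \emph{backward} from time $T$. This is exactly where the finite-dimensionality of $X^-=\range\pi_-$ and the resulting group property of $e^{-sA^-}$ (proposition~\ref{prop:semigroup-NEW}(c)) are indispensable --- without them the term $e^{-(s-T)A^-}\pi_-\xi(T)$ for $s<T$ would be meaningless. A secondary technical point is the bootstrap of $\xi$ from a $Y$-valued to an $X$-valued continuous map in Step 1, which relies on the singular-but-integrable smoothing estimate~(\ref{eq:reg-for-sing-NEW}); care is needed near $s=0$, which is why the conclusion in (b) is only continuity on the half-open interval $(0,T]$.
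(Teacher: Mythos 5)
The paper itself states this proposition as a toolbox item and defers the proof to~\cite{Joa-LOOPSPACE}, so your argument can only be measured against the standard one; your route --- project the Duhamel identity by $\pi_\pm$, run the $\pi_-$ part backward from time $T$ using that $e^{-sA^-}$ is a group on the finite-dimensional space $X^-$, and deduce the $T\to\infty$ remark from $\norm{e^{-sA^-}\pi_-}\le ce^{s\mu}$ for $s\le 0$ together with $\norm{\xi(T)}_X\le\rho_0$ --- is exactly that standard argument, and your Steps 1 and 2, including the sign bookkeeping when flipping the limits of integration, are fine.

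The gap is in Step 3, the direction (b)$\Rightarrow$(a). You differentiate the integral equation and claim the $\pi_+$-block yields $-A\pi_+\xi(s)+\pi_+f(\xi(s))$ ``by the standard computation for mild solutions\dots using continuity and boundedness of $f\circ\xi$''. Mere continuity of the forcing is not enough: for an analytic semigroup the map $s\mapsto\int_0^s e^{-(s-\sigma)A}g(\sigma)\,d\sigma$ with $g$ only continuous into $Y$ need not take values in $D(A)$ nor be differentiable, so a mild solution need not be a classical solution of~(\ref{eq:cauchy-local}); this is precisely the classical pathology of the inhomogeneous Cauchy problem with continuous data. What rescues the statement is a bootstrap you never perform: from the integral equation and the smoothing estimate~(\ref{eq:reg-for-sing-NEW}) one first shows that $\xi$ is locally H\"older continuous as a map $(0,T]\to X$; the Lipschitz lemma~\ref{le:f} then makes $s\mapsto f(\xi(s))$ locally H\"older into $Y$; and only with H\"older (not merely continuous) forcing does the linear theory (e.g.~\cite[sec.~3.2]{Henry-81-GeomTheory}) guarantee that the $\pi_+$-block is differentiable, takes values in $D(A)$, and satisfies the equation on $(0,T]$, after which your boundary-term computation and the finite-dimensional ODE for the $\pi_-$-block go through as written. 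So the ``main obstacle'' is not only the backward group on $X^-$ (which you handle correctly) but this mild-to-classical upgrade; with the H\"older bootstrap inserted your proof is complete, without it the differentiation in Step 3 is unjustified.
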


\subsection{Local stable manifold theorem}\label{sec:loc-stab-mf}

\begin{theorem}[$C^1$~graph]
\label{thm:local-stable-manifold-graph}
Assume the local setup of
Hypothesis~\ref{hyp:loc-coord-lambda};
see Figure~\ref{fig:fig-local-setup}.
In particular, consider the hyperbolic fixed point
$0$ of the local semi-flow $\phi$ defined
by~(\ref{eq:cauchy-local}) on $\Uu\subset X$.
Then the following is true.
There is a closed ball $\Bb^+\subset X^+$ 
of radius $r>0$ about $0$ such
that a neighborhood of $0$ in the
{\bf local stable manifold}
\begin{equation}\label{eq:loc-stab-mf}
     W^s(0,\Uu)
     :=\left\{ z\in \Uu\,\big|\,
     \text{$\phi(s,z)\in \Uu$ $\forall s>0$ and
     $\lim_{s\to\infty}\phi(s,z)=0$}
     \right\}
\end{equation}
is a graph over $\Bb^+$,
tangent to $X^+$ at $0$.
In fact, there is a Lipschitz
continuous map
$$
     \Gg^\infty
     =\left( G,id \right)
     :\Bb^+\to X^-\times \Bb^+
     ,\qquad
     G(0)=0
     ,\qquad
     dG(0)=0,
$$
of class $C^1$
such that $\Gg^\infty(\Bb^+)$ is a
neighborhood of $0$ in $W^s(0,\Uu)$;
cf. Figure~\ref{fig:fig-lambda-Lemma}.
\end{theorem}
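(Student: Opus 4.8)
The plan is the classical Lyapunov--Perron argument, but organized around the representation formula~(\ref{eq:representation-formula}) in the limit $T\to\infty$, in which the unstable contribution $e^{-(s-T)A^-}\pi_-\xi(T)$ disappears. Concretely, I would fix a radius $r>0$ (to be chosen small in terms of $\rho_0$, $\kappa$ from Lemma~\ref{le:f}, and the semigroup constant $c$ from Proposition~\ref{prop:semigroup-NEW}) and, for each $z_+\in\Bb^+=\Bb^+_r$, set up the fixed-point operator
\begin{equation*}
   (\Psi_{z_+}\xi)(s)
   =e^{-sA}\pi_+ z_+
   +\int_0^s e^{-(s-\sigma)A}\pi_+ f(\xi(\sigma))\,d\sigma
   -\int_s^\infty e^{-(s-\sigma)A^-}\pi_- f(\xi(\sigma))\,d\sigma
\end{equation*}
acting on the Banach space $\Xx_\mu$ of continuous maps $\xi:[0,\infty)\to X$ with finite norm $\Norm{\xi}_\mu:=\sup_{s\ge0}e^{s\mu/2}\Norm{\xi(s)}_X$, where $\mu\in(0,d)$ is the fixed number from the spectral gap. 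First I would check that $\Psi_{z_+}$ maps the closed ball of radius $\rho_0$ in $\Xx_\mu$ into itself: the exponential decay $\Norm{e^{-sA}\pi_+}\le c s^{-1/2}e^{-s\mu}$ from part~(d), the bound $\Norm{e^{-sA^-}\pi_-}\le c e^{s\mu}$ for $s\le0$ from part~(c), and the Lipschitz/vanishing-at-zero property $f(0)=0$, $\Norm{f(\xi)-f(\eta)}_p\le\kappa(\rho)\Norm{\xi-\eta}_X$ together give, after the standard convolution estimates (the integral $\int_0^\infty \sigma^{-1/2}e^{-\sigma\mu/2}\,d\sigma$ converges), a bound of the form $\Norm{\Psi_{z_+}\xi}_\mu\le c'(r+\kappa(\rho_0)\Norm{\xi}_\mu)$; shrinking $r$ and $\rho_0$ makes this a self-map. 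The same estimates with $f(\xi)-f(\eta)$ in place of $f(\xi)$ show $\Psi_{z_+}$ is a contraction with constant $\le c'\kappa(\rho_0)<1$. Banach's fixed point theorem then yields a unique $\xi=\xi(\cdot,z_+)\in\Xx_\mu$, and one reads off from the representation formula (equivalence (a)$\Leftrightarrow$(b) of Proposition~\ref{prop:representation-formula}, $T=\infty$) that $s\mapsto\xi(s,z_+)$ is precisely the semiflow line with $\pi_+\xi(0,z_+)=z_+$ that converges to $0$; hence $\xi(0,z_+)\in W^s(0,\Uu)$. I would then define $G(z_+):=\pi_-\xi(0,z_+)$ and $\Gg^\infty(z_+):=(G(z_+),z_+)=\xi(0,z_+)$.

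Next I would establish that the image is exactly a neighborhood of $0$ in the local stable manifold. The inclusion $\mathrm{graph}\,G\subset W^s(0,\Uu)$ is immediate from the construction. For the reverse, any $z\in W^s(0,\Uu)$ sufficiently close to $0$ stays in $\Bb_{\rho_0}$ for all $s\ge0$ and converges to $0$; a Gronwall/bootstrap argument using the same convolution inequalities upgrades this to the quantitative decay $\xi(\cdot,z)\in\Xx_\mu$, so $\xi(\cdot,z)$ is a fixed point of $\Psi_{\pi_+z}$, and by uniqueness $z=\xi(0,\pi_+z)=\Gg^\infty(\pi_+z)$.

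It remains to prove the regularity statements: that $G$ is Lipschitz, of class $C^1$, with $G(0)=0$ and $dG(0)=0$. The Lipschitz bound in $z_+$ follows from the uniform contraction (differentiate the fixed-point identity in the parameter, or simply estimate $\Norm{\xi(\cdot,z_+)-\xi(\cdot,z_+')}_\mu$ directly against $\Norm{z_+-z_+'}_X$). That $G(0)=0$ is clear since $\xi\equiv0$ solves the equation for $z_+=0$. For $C^1$ dependence I would invoke the uniform contraction principle with parameters: the map $(z_+,\xi)\mapsto\Psi_{z_+}\xi$ is $C^1$ jointly (using that $f$ is $C^1$ with the second Lipschitz estimate of Lemma~\ref{le:f} controlling $df$, so that differentiation under the integral sign is justified and $\sigma\mapsto df(\xi(\sigma))$ is continuous and bounded), its fibrewise derivative is a contraction uniformly in $z_+$, hence the fixed point $\xi(\cdot,z_+)$ is $C^1$ in $z_+$ and its derivative solves the linearized fixed-point equation. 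Evaluating that linearized equation at $z_+=0$, where $\xi\equiv0$ and $df(0)=0$, gives $d\xi(0,0)v=(e^{-sA}\pi_+v)$, so $d\Gg^\infty(0)v=(0,v)$, i.e. $dG(0)=0$ and the graph is tangent to $X^+$ at $0$; here the remark after Corollary~\ref{cor:f} that $\kappa(\rho)$ is nondecreasing (hence $\to0$ as $\rho\to0$) is what makes the error terms vanish.

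The main obstacle I anticipate is not the contraction itself but the $C^1$ bookkeeping: verifying that the candidate derivative operator, obtained formally by differentiating the integral formula, genuinely lands in $\Ll(X^+,\Xx_\mu)$ and depends continuously on $z_+$ requires the singular weight $s^{-1/2}$ in~(\ref{eq:reg-for-sing-NEW}) to interact correctly with the $e^{s\mu/2}$ weight in the norm, and the second (quadratic) Lipschitz estimate for $df$ from Lemma~\ref{le:f} is exactly what is needed to control the remainder in the difference quotient uniformly. Everything else is the standard parabolic Lyapunov--Perron scheme, and the payoff is that this sets up the notation and the operator $\Psi$ in the precise form ($T=\infty$ limit of the mixed-Cauchy representation formula) that Theorem~\ref{thm:backward-lambda-lemma} will perturb for finite $T$.
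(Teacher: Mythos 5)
Your proposal is correct and follows essentially the same route as the paper: the paper treats this theorem as the $T=\infty$ special case of the contraction scheme used for the backward $\lambda$-lemma, setting up exactly your operator $\Psi_{z_+}$ on the exponentially weighted space $Z^\rho_\mu$ of~(\ref{eq:exp norm}), defining $G(z_+)=\pi_-\bigl(\eta_{z_+}(0)\bigr)$, and obtaining $C^1$ regularity via the uniform contraction principle together with the Lipschitz lemma~\ref{le:f} (with the monotonicity of $\kappa$ giving $dG(0)=0$), otherwise citing Henry's book. No substantive differences to report.
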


\begin{proposition}[$L^2$~extension]
\label{prop:L2-extension-stab-mf}
Assume
Theorem~\ref{thm:local-stable-manifold-graph}.
Then the linearization 
$d\Gg^\infty(z_+):X^+\to X$ 
extends to a bounded linear operator
on the $L^2$ completions,
uniformly in $z_+\in\Bb^+$.
More precisely, it holds that
\begin{equation*}
     \Norm{d\Gg^\infty(z_+) v}_2
     \le 2 \Norm{v}_2
     ,\qquad
     \Norm{d\Gg^\infty(z_+) v-v}_2
     \le \frac{1}{4} \Norm{v}_2,
\end{equation*}
for all $v\in\pi_+(L^2)$ and $z_+\in \Bb^+$.
\end{proposition}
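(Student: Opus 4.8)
The plan is to read off $d\Gg^\infty(z_+)$ from the representation formula~(\ref{eq:representation-formula}) at $T=\infty$ and to estimate it directly in $L^2$, using that with respect to the $L^2$ inner product the Jacobi operator $A$ is self-adjoint, so that $\pi_\pm$ extend to orthogonal projections of $L^2$ and the subspace semigroups of proposition~\ref{prop:semigroup-NEW} contract on $L^2$ with the bare exponential rate. Recall from the proof of theorem~\ref{thm:local-stable-manifold-graph} that $\Gg^\infty(z_+)=\xi(0)$, where $\xi=\xi_{z_+}$ is the unique solution of~(\ref{eq:representation-formula}) with $T=\infty$ and $\pi_+\xi(0)=z_+$, and that $z_+\mapsto\xi_{z_+}$ is of class $C^1$ into the trajectory space used there. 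Differentiating the fixed point identity in $z_+$ in a direction $v\in X^+$ shows that $\eta:=d\xi_{z_+}\cdot v$ solves the linearized representation formula
\begin{equation*}
\begin{split}
   \eta(s)
   &=e^{-sA}\pi_+v
    +\int_0^s e^{-(s-\sigma)A}\pi_+\,df(\xi(\sigma))\eta(\sigma)\,d\sigma
   \\
   &\quad
    -\int_s^\infty e^{-(s-\sigma)A^-}\pi_-\,df(\xi(\sigma))\eta(\sigma)\,d\sigma ,
\end{split}
\end{equation*}
and that $d\Gg^\infty(z_+)v=\eta(0)$. Evaluating at $s=0$, where the $\pi_+$-integral vanishes, $e^{0A}\pi_+v=v$, and the remaining integral takes values in the finite-dimensional smooth subspace $X^-$, gives the splitting $d\Gg^\infty(z_+)v=v+dG(z_+)v$ with $dG(z_+)v=-\int_0^\infty e^{\sigma A^-}\pi_-\,df(\xi(\sigma))\eta(\sigma)\,d\sigma\in X^-$. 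Thus it suffices to establish the uniform $L^2$ bound $\Norm{dG(z_+)v}_2\le\frac14\Norm{v}_2$: the estimate $\Norm{d\Gg^\infty(z_+)v}_2\le2\Norm{v}_2$ then follows from the triangle inequality, and the $L^2$-extension assertion is precisely the statement that $v\mapsto\eta(0)$ is bounded $\pi_+(L^2)\to L^2$.

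The heart of the matter is an $L^2$-based a priori estimate for $\eta$. I would work in the Banach space of maps $\eta:(0,\infty)\to W^{1,2}$, continuous also into $L^2$ on $[0,\infty)$, with
$$
   \NORM{\eta}
   :=\sup_{s>0}e^{s\mu/4}\Norm{\eta(s)}_2
    +\sup_{s>0}\min\{1,\sqrt{s}\}\;e^{s\mu/4}\Norm{\eta(s)}_{1,2},
$$
and show that the affine operator $\eta\mapsto$ (right-hand side of the linearized formula) maps this space to itself, with inhomogeneity $s\mapsto e^{-sA}\pi_+v$ of $\NORM{\cdot}$-norm $\le C\Norm{v}_2$ and linear part of operator norm $\le\theta(\rho)$, where $\theta(\rho)\to0$ as $\rho\to0$. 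The semigroup inputs, with $L^1$ as working target, are $\Norm{e^{-sA}\pi_+}_{\Ll(L^2)}\le e^{-s\mu}$, the smoothing bounds $\Norm{e^{-sA}\pi_+}_{\Ll(L^1,L^2)}\le cs^{-1/4}e^{-s\mu}$ and $\Norm{e^{-sA}\pi_+}_{\Ll(L^1,W^{1,2})}\le cs^{-3/4}e^{-s\mu}$ from proposition~\ref{prop:semigroup-NEW}(d), and $\Norm{e^{\sigma A^-}\pi_-}_{\Ll(L^1,W^{1,2})}\le ce^{-\sigma\mu}$ (a fortiori into $L^2$) from proposition~\ref{prop:semigroup-NEW}(c); the nonlinearity input is the uniform smallness estimate $\Norm{df(\xi(\sigma))w}_1\le\kappa(\rho)\Norm{w}_{1,2}$ for all $\sigma\ge0$, with $\kappa(\rho)\to0$ as $\rho\to0$, valid because the stable trajectory obeys $\Norm{\xi(\sigma)}_{1,2p}\le\rho$. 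Feeding these into the three terms and bounding the resulting Volterra integrals — all dominated by $\int_0^s(s-\sigma)^{-a}e^{-(s-\sigma)\mu}\max\{1,\sigma^{-1/2}\}e^{-\sigma\mu/4}\,d\sigma$ with $a\in\{1/4,3/4\}<1$, and their counterparts on $[s,\infty)$, which by standard estimates are $\le C_\mu\max\{1,s^{-1/2}\}e^{-s\mu/4}$ — makes the operator a contraction once $\Bb^+$ is shrunk so that $\rho$ is small. Its unique fixed point is $\eta$: for $v\in X^+$ it agrees, by uniqueness and the embedding $X^+\hookrightarrow W^{1,2}$, with the solution of theorem~\ref{thm:local-stable-manifold-graph}. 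Hence $\NORM{\eta}\le C\Norm{v}_2$, and $v\mapsto\eta(0)$ extends boundedly from $X^+$ to $\pi_+(L^2)$, uniformly in $z_+\in\Bb^+$ — this is the $L^2$-extension.

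With the a priori estimate in hand the numerical constants drop out of the splitting of the first paragraph:
$$
   \Norm{dG(z_+)v}_2
   \le\int_0^\infty\Norm{e^{\sigma A^-}\pi_-}_{\Ll(L^1,L^2)}\Norm{df(\xi(\sigma))\eta(\sigma)}_1\,d\sigma
   \le c\,\kappa(\rho)\int_0^\infty e^{-\sigma\mu}\max\{1,\sigma^{-1/2}\}e^{-\sigma\mu/4}\,d\sigma\;\NORM{\eta}
   \le C_*\,\kappa(\rho)\Norm{v}_2 .
$$
Shrinking $\Bb^+$ a last time so that $C_*\kappa(\rho)\le\frac14$ yields $\Norm{d\Gg^\infty(z_+)v-v}_2=\Norm{dG(z_+)v}_2\le\frac14\Norm{v}_2$, and therefore $\Norm{d\Gg^\infty(z_+)v}_2\le\frac54\Norm{v}_2\le2\Norm{v}_2$, for all $v\in\pi_+(L^2)$ and $z_+\in\Bb^+$.

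The main obstacle is the uniform $L^1$-smallness of $df$ along the stable trajectory — an $L^2$-scale analogue of the Lipschitz lemma~\ref{le:f} — because $df(\xi)$ is genuinely first order in its argument: the formula for $df$ in the proof of lemma~\ref{le:f} contains the terms $2E_{12}(x,\xi)(\dot x,\Nabla{t}w)$ and $2E_{22}(x,\xi)(\Nabla{t}\xi,\Nabla{t}w)$, so the $W^{1,2p}\to L^p$ bound there does not transfer verbatim to an $L^2$-type target. One has to revisit formula~(\ref{eq:f}) term by term, using that every zeroth-order contribution not already carrying a factor $E_{ij}(x,\xi)$ that vanishes at $\xi=0$ cancels because $df(0)=0$ (here identity~(\ref{eq:E112=R}) enters), that each first-order contribution does carry such a vanishing factor, and that $\Norm{\Nabla{t}\xi(\sigma)}_{L^2}\le\Norm{\Nabla{t}\xi(\sigma)}_{L^{2p}}\le\rho$ along the stable trajectory, so that estimating into $L^1$ by H\"older with $L^2\times L^2$ closes uniformly in $\sigma$ without ever putting $\Nabla{t}\xi$ into $L^\infty$ — exactly the bookkeeping the constant $\kappa_*$ in lemma~\ref{le:f} is set up for (cf. the remark following it). Once this bound is available, the remaining Volterra estimates with the integrable singularity $(s-\sigma)^{-3/4}$ and the verification that the fixed point is the one of theorem~\ref{thm:local-stable-manifold-graph} are routine.
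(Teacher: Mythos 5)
Your argument is correct and is essentially the paper's own route: the paper obtains this proposition as the formal $T=\infty$ case of Step~I (together with the $\pi_-$-integral estimate) in the proof of theorem~\ref{thm:uniform-C1}, i.e.\ it linearizes the fixed-point/representation formula, controls the linearization in a singularly weighted $W^{1,2}$ norm by $\Norm{v}_2$ via the smoothing estimates of proposition~\ref{prop:semigroup-NEW}, and then reads off both $L^2$ bounds at $s=0$ from the finite-dimensional $\pi_-$-integral, exactly as you do. Two cosmetic remarks: the bound $\Norm{df(\xi(\sigma))w}_1\le\kappa(\rho)\Norm{w}_{1,2}$ that you re-derive term by term is already corollary~\ref{cor:f} with $p=1$ applied along the stable trajectory, and no final shrinking of $\Bb^+$ is needed, since the smallness conditions~(\ref{eq:rho-backward}) and~(\ref{eq:rho_0-backward}) built into the local setup are arranged precisely so that the constants $2$ and $\frac{1}{4}$ come out.
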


The local stable manifold
Theorem~\ref{thm:local-stable-manifold-graph}
is well known; see
e.g.~\cite[Thm.~5.2.1]{Henry-81-GeomTheory}
for a proof by the contraction method.
In finite dimensions the theorem is also called
Hadamard-Perron Theorem~\cite{Hadamard-01,Perron-28}.
Observe that proofs of
Theorem~\ref{thm:local-stable-manifold-graph}
and Proposition~\ref{prop:L2-extension-stab-mf}
arise as special cases of the proofs
in Section~\ref{sec:proofs}, formally set $T=\infty$.
Now we recall the {\bf contraction method}
for the stable manifold theorem.
Pick a value for each parameter of interest,
in our case $z_+\in X^+$. Our object of interest
is a heat flow line $\eta:[0,\infty)\to\Uu$
whose initial value projects to $z_+$ under $\pi_+$
and which converges to $0$, as $s\to\infty$.
Find a complete metric space, namely
\begin{equation}\label{eq:exp norm}
\begin{split}
     Z=Z^\rho_\mu
     :=\Bigl\{\eta\in  
     C^0([0,\infty),X)\,\,\Big|\,\,
     \Norm{\eta}_{\exp}
     :=\sup_{s\ge 0} e^{s\frac{\mu}{2}}
     \Norm{\eta(s)}_X
     \le\rho
     \Bigr\}
\end{split}
\end{equation}
for suitable constants $\rho$ and $\mu$,
and a strict contraction on $Z$, namely
\begin{equation*}
     (\Psi_{z_+}\eta)(s)
     =e^{-sA}z_+
     +\int_0^s e^{-(s-\sigma)A}\pi_+
     f(\eta(\sigma)) d\sigma
     -\int_s^\infty e^{-(s-\sigma)A^-}
     \pi_-f(\eta(\sigma)) d\sigma
\end{equation*}
such that the (unique) fixed point $\eta_{z_+}$
is the initial object of interest.
Use the representation
formula~(\ref{eq:representation-formula})
for $T=\infty$ to see that this is indeed true.
In fact this setup works for any
$\mu$ in the spectral gap~(\ref{eq:mu}) of the
Jacobi operator $A:=A_x$,
see~\cite{Joa-LOOPSPACE}, and for all
$\rho>0$ sufficiently small,
that is whenever~(\ref{eq:rho-backward}) holds.
The map
\begin{equation*}
\begin{split}
     G: \Bb^+
     &\to X^-,
      \qquad\qquad\quad
      \Bb^+
      :=\left\{z_+\in X^+:
      \Norm{z_+}_X\le\rho/2c
      \right\},
   \\
     z_+
     &\mapsto \pi_-\left( \eta_{z_+}(0)\right)
\end{split}
\end{equation*}
has the properties asserted by
Theorem~\ref{thm:local-stable-manifold-graph};
here $c=c(\mu)$ is given by
Proposition~\ref{prop:semigroup-NEW}.

\begin{remark}[Unstable manifold]
\label{rem:unstable-manifold}
The contraction method also serves
to represent the elements of the
{\bf local unstable manifold} $W^u(0,\Uu)$; see
          e.g.~\cite[Theorem~5.2.1, proof of 
          Theorem~5.1.3]{Henry-81-GeomTheory}.
By definition this is the set of end points
of all (backward) heat flow lines $\tilde\eta$ in $\Uu$
parametrized by $(-\infty,0]$
and emanating at time $-\infty$ from $0$.
There is a ball $\Bb^-\subset W^u\subset X^-$
of sufficiently small radius $r>0$
such that the following is true.

Pick $\gamma\in\Bb^-$ and consider the backward heat
flow trajectory $\tilde\eta$ which satisfies
$\gamma=\tilde\eta(0)=\pi_-\tilde\eta(0)$.
        (Backward flow invariance of
        descending disks and
        $W^u\subset X^-$ imply that
        $\tilde\eta$ lies completely in $X^-$.)
Note that $\tilde\eta$ is asymptotic to zero in
infinite backward time since $\tilde\eta(0)$ lies in a
descending disk. 
By~(\ref{eq:representation-formula})
and the uniqueness Theorem~17
in~\cite{Joa-HEATMORSE-II} for
action bounded backward heat flow solutions
$\tilde\eta$ is equal to the unique fixed
point of the map
\begin{equation*}
\begin{split}
     \left(\Phi_{\gamma}\tilde\eta\right)(s)
     :=e^{-sA^-}\gamma
    &-\int_s^0 e^{-(s-\sigma)A^-}\pi_-
     f(\tilde\eta(\sigma)) d\sigma
     +\int_{-\infty}^s e^{-(s-\sigma)A}
     \pi_+ f(\tilde\eta(\sigma)) d\sigma
\end{split}
\end{equation*}
which acts as a strict contraction
on the complete metric space
\begin{equation*}
\begin{split}
     \tilde Z=\tilde Z_{\frac{\mu}{2},\rho}
     :=\Bigl\{\tilde\eta\in  
     C^0((-\infty,0],X)\,\,\Big|\,\,
     \Norm{\tilde\eta}_{\exp}
     :=\sup_{s\le 0} e^{-s\frac{\mu}{2}}
     \Norm{\tilde\eta(s)}_X
     \le\rho
     \Bigr\}.
\end{split}
\end{equation*}
\end{remark}

\section{Proofs}\label{sec:proofs}

\subsection{Proof of the backward $\lambda$-Lemma (Theorem~\ref{thm:backward-lambda-Lemma})}

\begin{figure}
  \includegraphics{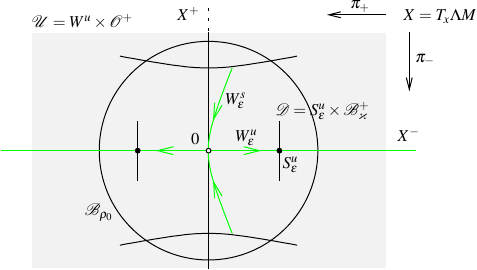}
  \caption{Local setup}
  \label{fig:fig-local-setup}
\end{figure}

Uniform exponential 
convergence in step~6 is the heart
of the proof. It relies on a
suitable time decomposition of trajectories.
Throughout assume
\begin{hypothesis}\label{hyp:backward-lambda-Lemma}
Assume the local setup of
Hypothesis~\ref{hyp:loc-coord-lambda}; see
Figure~\ref{fig:fig-local-setup}.
Consider the constants $\rho_0>0$ and $\kappa_*\ge 1$
and the continuous function
$\kappa(\rho)$ with $\kappa(0)=0$ provided
by the Lipschitz Lemma~\ref{le:f}.
Note that $\Bb_{\rho_0}\subset\Uu$.
Fix $\mu$ in the spectral
gap $(0,d)$ given by~(\ref{eq:mu}) and
a constant $c=c(\mu)\ge 1$ satisfying
Proposition~\ref{prop:semigroup-NEW} for
the (finitely many) choices of $(\ell,k,r,q)$
that will be used in the present proof.
We may assume that~\footnote{
  Otherwise, choose $\rho_0>0$ smaller. This
  leads to a smaller $\eps$ in
  Hypothesis~\ref{hyp:loc-coord-lambda}~(a).
  Condition~(\ref{eq:rho_0-backward}) is used in step~4
  and in the proof of
  Theorem~\ref{thm:uniform-C1},
  both concerning $C^1$.
}
\begin{equation}\label{eq:rho_0-backward}
     \rho_0 c^2\kappa_*
     \left(
     \frac{1}{\mu}
     +\frac{8}{\mu^\frac{1}{4}}
     +\frac{6}{\mu^\frac{5}{4}}
     \right)
     \le\frac{1}{8}.
\end{equation}
In Hypothesis~\ref{hyp:loc-coord-lambda}~(b)
we picked $\varkappa\in(0,\rho_0)$ and so the
constant~\footnote{
  The definition of $T_1$ ensures in step~2 the second
  of the two endpoint conditions~(\ref{eq:xi-end}).
  }
\begin{equation}\label{eq:T_1}
     T_1=T_1(\varkappa,\mu,\rho_0)
     :=-\frac{2}{\mu}\ln
     \frac{\varkappa}{\rho_0}> 0
\end{equation}
is well defined.
Assume $\rho\in(0,\frac12\rho_0]$ is sufficiently
small such that
\begin{equation}\label{eq:rho-backward}
     c^2\kappa(\rho)
     \left(\frac{9}{\mu^\frac{1}{4}}
     +\frac{4}{3\mu}
     +\frac{3}{\mu^\frac{5}{4}}
     \right) 
     \le\frac{1}{8}
\end{equation}
and such that all points of $\Uu$ of distance $\le\rho$
to the descending sphere $S^u_\eps$ are
contained in $\Bb_{\rho_0}$.
Fix a constant $T_2=T_2(c,\mu,\rho,\eps)\ge0$
such that $8ce^{-\mu T_2/4}\le1$ and
$\phi_{-T_2/8}W^u_\eps\subset\Bb^-:=\Bb_{\rho/2c}\cap X^-$;
see Remark~\ref{rem:unstable-manifold}.~\footnote{
  The conditions on $T_2$ will be
  used in step~6, in particular in~(\ref{eq:back-traj}).
  }
Set
$
     T_0
     :=\max\{T_1,T_2\}
     > 0
$.
\end{hypothesis}
Pick $T\ge T_0$ and $\gamma\in S^u_\eps$ and consider the
infinite dimensional disk 
$\Dd_\gamma=\{\gamma\}\times\Bb^+_\varkappa$. The key
observation to represent the preimage 
${\phi_T}^{-1}\Dd_\gamma$ under the time-$T$-map $\phi_T$
as a graph over the stable subspace $X^+$ is the
fact that to any pair $(q_u,z_+)\in X^-\oplus X^+$
sufficiently close to zero there corresponds a unique
heat flow trajectory $\xi$ whose initial value $\xi(0)$
projects under $\pi_+$ to $z_+$ and whose endpoint at
time $T$ projects under $\pi_-$ to $q_u$; see
Remark~\ref{rem:mixed-Cauchy}. In particular, for
$q_u:=\gamma$ any $z_+\in X^+$ near the origin
corresponds to a unique heat flow line 
$\xi=\xi^T_{\gamma,z_+}$ which ends at time $T$ in
$\Dd_\gamma$. Because its initial value $\xi(0)$ is of
the form $\left(\pi_-\xi(0),z_+\right)$, it is natural
to define the map $G^T_\gamma(z_+):=\pi_-\xi(0)$
whose graph at $z_+$ reproduces $\xi(0)$.
In fact, we prove that for any $z_+\in X^+$ with
$\norm{z_+}\le\rho/2c$ there is precisely one semi-flow
line $\xi=\xi_{\gamma,z_+}^T$ with initial
condition $\pi_+ \xi(0)=z_+$ and endpoint condition
$\xi(T)\in\Dd_\gamma$. The latter is equivalent to
\begin{equation}\label{eq:xi-end}
     \pi_-\xi(T)=\gamma
     \quad\wedge\quad
     \Norm{\xi(T)-\gamma}_X\le\varkappa.
\end{equation} 
We will see in step~2 that the definition of $T_1$ assures the second condition.

The key step to determine the unique
semi-flow line $\xi$ associated
to $(T,\gamma,z_+)$
is to set up a strict
contraction on a complete
metric space $Z^T$ whose (unique) fixed
point is $\xi$.
Set
\begin{equation}\label{eq:exp-T}
     \Norm{\xi}_{\exp}
     =\Norm{\xi}_{\exp,T}
     :=\max_{s\in[0,T]} 
     e^{s\frac{\mu}{2}}
     \Norm{\xi(s)}_X
\end{equation}
and for $\gamma_T:=\phi_{-T}\gamma$ define
\begin{equation}\label{eq:ZT}
     Z^T
     =Z^{T,\gamma}_{\mu/2,\rho}
     :=\left\{
     \xi\in
     C^0([0,T],X)\,\colon
     \Norm{\xi-\phi_\cdot\gamma_T}
     _{\exp}
     \le\rho\right\}.
\end{equation}
Consider the map
$\Psi^T=\Psi^T_{\gamma,z_+}$ defined on $Z^T$ by
\begin{equation}\label{eq:Psi}
\begin{split}
     \left(\Psi_{\gamma,z_+}^T
     \xi\right)(s)
    &:=e^{-sA}z_+
     +\int_0^s e^{-(s-\sigma)A}\pi_+
     f(\xi(\sigma))
     \, d\sigma
     \\
    &\quad
     +e^{-(s-T)A^-}\gamma
     -\int_s^T e^{-(s-\sigma)A^-}\pi_-
     f(\xi(\sigma))
     \, d\sigma
\end{split}
\end{equation}
for every $s\in[0,T]$.
The fixed points of $\Psi^T$ 
correspond to the desired heat flow
trajectories by
Proposition~\ref{prop:representation-formula}.
By step~1 and step~2 below $\Psi^T$
is a strict contraction on $Z^T$.
Hence by the Banach fixed point theorem
it admits a unique fixed point $\xi_{\gamma,z_+}^T$
and for $\Bb^+:=\Bb^+_{\rho/2c}\subset X^+$
we define the map
\begin{equation}\label{eq:G^T}
     G^T:S^u_\eps\times
     \Bb^+\to W^u\subset X^-
     ,\qquad
     \left(\gamma,z_+\right)\mapsto 
     \pi_-\xi_{\gamma,z_+}^T(0)
     =:G^T_\gamma(z_+).
\end{equation}
Actually $\Bb^+$ is the same ball for
which the stable manifold
Theorem~\ref{thm:local-stable-manifold-graph}
holds true~\cite{Joa-LOOPSPACE}.

The proof takes six steps.
Fix $\gamma\in S^u_\eps$ and $z_+\in\Bb^+$
and abbreviate $\Psi^T=\Psi_{\gamma,z_+}^T$.

\vspace{.2cm}
\noindent
{\bf Step 1.}
{\it Fix $T\ge0$.
Then the set $Z^T$ equipped with the
metric induced by the exp norm is a
complete metric space, any 
$\xi\in Z^T$ takes values in 
$\Bb_{\rho_0}$, and $\Psi^T$ acts on $Z^T$.
}

\begin{proof}
In case of
the compact domain
$[0,T]$ the space $C^0([0,T],X)$
is complete with respect to the
supremum norm, hence with respect 
to the exp norm as both norms
are equivalent by compactness of 
$[0,T]$. The subset $Z^T\subset
C^0([0,T],X)$ is
closed with respect to the exp norm.
By the assumption
which immediately follows~(\ref{eq:rho-backward})
the elements of $Z^T$ take values 
in $\Bb_{\rho_0}$, hence in $\Uu$.

To see that $\Psi^T$ acts on $Z^T$
we need to verify that $\Psi^T\xi$ is continuous
and satisfies the 
exponential decay condition in~(\ref{eq:ZT})
whenever $\xi\in Z^T$.
By definition
$\Psi^T\xi$ is a sum of four terms.
That each one is continuous
as a map $[0,T]\to X$ is standard.
For terms one, two, and four
see step~1~(iii) in the proof of
Theorem~\ref{thm:local-stable-manifold-graph} 
given in~\cite{Joa-LOOPSPACE}.
For term three continuity follows 
from the definition
of the exponential by the
power series~(\ref{eq:exponential}).
For latter reference we sketch the argument for term two
which we denote by $F(s)$:
Continuity of $F:[0,T]\to X$ and the fact
that $F(0)=0$ (used in steps~2 and~3 below) both 
follow by an analogue
of~\cite[Le.~9.7~a)]{Joa-HABILITATION} for $-A$
instead of $\Delta$ and with $p=2$;
see also~\cite[Le.~3.2.1]{Henry-81-GeomTheory}.
The condition to 
be checked is that the map $\tilde{f}
:=\pi_+ \circ f\circ\xi:[0,T]\to Y^+\hookrightarrow Y$
is continuous and bounded: This is true since
$\xi:[0,T]\to X$ is continuous and bounded by 
definition of $Z^T$ and so is $f$
by 
Lemma~\ref{le:f}.

We prove exponential decay.
For $s\in[0,T]$
consider the heat flow trajectory
given by $\phi_s\gamma_T$.
By the representation formula of
Proposition~\ref{prop:representation-formula}
it satisfies 
\begin{equation}\label{eq:phi-s}
\begin{split}
     \phi_s\gamma_T
    &=\int_0^s e^{-(s-\sigma)A}\pi_+
     f(\phi_\sigma \gamma_T)\, d\sigma
     +e^{-(s-T)A^-}\gamma
     -\int_s^T e^{-(s-\sigma)A^-}\pi_-
     f(\phi_\sigma \gamma_T)\, d\sigma.
\end{split}
\end{equation}
Here we used that $\pi_+\gamma_T=0$,
because $\gamma$ and therefore 
$\gamma_T=\phi_{-T}\gamma$ lies in
$W^u(0,\Uu)\subset X^-$ by backward
flow invariance. By the same argument
$\pi_-\phi_T\gamma_T=\pi_-\gamma
=\gamma$.
By definition~(\ref{eq:Psi}) of $\Psi^T$
and~(\ref{eq:phi-s})
we get for $s\in[0,T]$ the estimate
\begin{equation}\label{eq:Psixi-bounded}
\begin{split}
    &\Norm{\left(\Psi^T\xi\right)(s)
     -\phi_s\gamma_T
     }_X
   \\
    &\le
     \Norm{e^{-sA}z_+}_X
     +\int_0^s
     \Norm{e^{-(s-\sigma)A}\pi_+}
     _{\Ll(Y,X)}
     \Norm{f(\xi(\sigma))
     -f(\phi_\sigma\gamma_T)
     }_Y d\sigma
   \\
    &\quad
     +\int_s^T
     \Norm{e^{-(s-\sigma)A^-}\pi_-}
     _{\Ll(Y,X)}
     \Norm{f(\xi(\sigma))
     -f(\phi_\sigma\gamma_T)
     }_Y d\sigma
   \\
    &\le ce^{-s\mu}\Norm{z_+}_X
     +c\kappa(\rho)e^{-s\frac{\mu}{2}}
     \Norm{\xi-\phi_\cdot\gamma_T}
     _{\exp}
     \int_0^s
     \frac{e^{-(s-\sigma)\frac{\mu}{2}}}
     {(s-\sigma)^{\frac{3}{4}}}
     \, d\sigma
   \\
    &\quad
     +c\kappa(\rho)e^{-s\frac{\mu}{2}}
     \Norm{\xi-\phi_\cdot\gamma_T}
     _{\exp}
     \int_s^T 
     e^{(s-\sigma)\frac{3}{2}\mu}
     \, d\sigma
   \\
    &\le 
     \frac{\rho}{2} e^{-s\mu}
     + 
     c\kappa(\rho)
     \left(
     \frac{8}{\mu^{1/4}} 
     +\frac{2}
     {3\mu} 
     \right)
     \rho e^{-s\frac{\mu}{2}}
     \le\rho e^{-s\frac{\mu}{2}}
\end{split}
\end{equation}
where the last inequality is by
smallness~(\ref{eq:rho-backward}) of $\rho$.
Inequality two follows by the exponential decay 
Proposition~\ref{prop:semigroup-NEW}
with constant $c$
and the Lipschitz Lemma~\ref{le:f}
for $f$ with Lipschitz constant $\kappa(\rho)$.
We multiplied the integrands by
$e^{-\sigma\frac{\mu}{2}}
e^{\sigma\frac{\mu}{2}}$ to create the exp norms.
Inequality three uses
$\norm{z_+}_X\le\frac{\rho}{2c}$ and boundedness
of the exp norms by $\rho$ since $\xi\in Z^T$.
We also used that
\begin{equation}\label{eq:int-preGamma}
     \int_s^\infty
     e^{(s-\sigma)\frac{3}{2}\mu}
     \, d\sigma
     =\frac{2}{3\mu}.
\end{equation}
To estimate the other integral define
$\Gamma(\alpha):=\int_0^\infty e^{-\tau}\tau^{\alpha-1}\, d\tau$
for $\alpha>0$.
The $\Gamma$ function
satisfies $\Gamma(\frac{1}{4})
=4\Gamma(\frac{5}{4})\le 4$ since
$\Gamma\le1$ on the interval $[1,2]$. Hence
\begin{equation}\label{eq:int-Gamma}
     \int_0^s\frac{e^{-(s-\sigma)\frac{\mu}{2}}}
     {(s-\sigma)^{\frac{3}{4}}} \, d\sigma
     =\left(\frac{2}{\mu}\right)^{\frac{1}{4}}
     \int_0^{s\frac{\mu}{2}} e^{-\tau} 
     \tau^{\frac{1}{4}-1} \, d\tau
     \le \frac{2^{\frac{1}{4}}}{\mu^{\frac{1}{4}}}\,
     \Gamma(\tfrac{1}{4})
     \le \frac{8}{\mu^{\frac{1}{4}}}.
\end{equation}
\qed
\end{proof}

\noindent
{\bf Step 2.}
{\it For $T\ge0$ the map 
$\Psi^T$ acts as a strict contraction on $Z^T$.
Each image point $\Psi^T\xi$
satisfies the initial condition
$\pi_+\left(\Psi^T\xi\right)(0)=z_+$
and, if $T\ge T_1$, also the endpoint 
condition~(\ref{eq:xi-end}),
that is $\left(\Psi^T\xi\right)(T)
\in\Dd_\gamma=\{\gamma\}\times \Bb^+_\varkappa$.
}

\begin{proof}
Assume $T\ge 0$ and fix 
$\xi_1,\xi_2\in Z^T$.
Similarly to~(\ref{eq:Psixi-bounded})
we obtain that
\begin{equation}\label{eq:est-step-2}
\begin{split}
    &\Norm{\left(\Psi^T\xi_1\right)(s)
     -\left(\Psi^T\xi_2\right)(s)}_X
   \\
     &\le
      \int_0^s\Norm{e^{-(s-\sigma)A}
      \pi_+}_{\Ll(Y,X)}
      \Norm{f(\xi_1(\sigma))
      -f(\xi_2(\sigma))}_Y d\sigma
   \\
     &\quad
      +\int_s^T
      \Norm{e^{-(s-\sigma)A^-}\pi_-}
      _{\Ll(Y,X)}
      \Norm{f(\xi_1(\sigma))
      -f(\xi_2(\sigma))}_Y d\sigma
    \\
      &\le c\kappa(\rho)
       \left(
       \frac{8}{\mu^{1/4}}
       +\frac{2}{3\mu} 
       \right)
       e^{-s\frac{\mu}{2}}
       \Norm{\xi_1-\xi_2}_{\exp}
\end{split}
\end{equation}
for every $s\in[0,T]$. Now use the smallness
assumption~(\ref{eq:rho-backward}) on
$\rho$ to conclude that
$\norm{\Psi^T\xi_1-\Psi^T\xi_2}_{\exp}
\le\frac{1}{2}\norm{\xi_1-\xi_2}_{\exp}$.

The identities
$\pi_+\left(\Psi^T\xi\right)(0)=z_+$ and
$\pi_-\left(\Psi^T\xi\right)(T)=\gamma$
follow from definition~(\ref{eq:Psi}) of $\Psi^T$,
the identities $\pi_+\pi_-=\pi_-\pi_+=0$, strong
continuity of the semigroups on $X^-$ and $X^+$
asserted by
Proposition~\ref{prop:semigroup-NEW},
continuity and boundedness of both integrands,
and $F(0)=0$ by the proof of step~1.
Concerning the second endpoint
condition in~(\ref{eq:xi-end})
assume $T\ge T_1$ and
evaluate~(\ref{eq:Psixi-bounded})
at $s=T$ to get
$$
     \Norm{\left(\Psi\xi\right)(T)
     -\gamma}_X
     \le\rho
     e^{-T\frac{\mu}{2}}
     \le e^{-T_1\frac{\mu}{2}}\rho_0 
     =\varkappa
$$
where the last step is by 
definition of $T_1$ in~(\ref{eq:T_1}).
\qed
\end{proof}

\noindent
{\bf Step 3.}
{\it For $T\ge0$ the map $G^T:S^u_\eps\times\Bb^+\to X^-$
defined by~(\ref{eq:G^T}) is of class $C^1$ and,
for each $\gamma\in S^u_\eps$, the map 
$G^T_\gamma:=G^T(\gamma,\cdot):\Bb^+\to X^-$ satisfies
$$
     G^T_\gamma(0)
     =\phi_{-T}\gamma
     =:\gamma_T
     ,\qquad
     \mathrm{graph}\, G^T_\gamma
     =\left\{\xi_{\gamma,z_+}^T(0)
     \,\big|\, 
     z_+\in\Bb^+\right\}.
$$
} 

\begin{proof}
Assume $T\ge 0$. By step~2 and its proof the map
$$
     \Psi^T:
     S^u_\eps\times\Bb^+\times Z^T\to Z^T,\qquad
     (\gamma,z_+,\xi)\mapsto\Psi^T_{\gamma,z_+}\xi
$$
is a uniform contraction on $Z^T$ with contraction
factor $\tfrac12$. (Actually $Z^T$ depends on $\gamma$,
but the complete metric spaces associated to different
$\gamma$'s are quasi-isometric.) Observe that
$\Psi^T$ is linear, hence smooth, in $\gamma$ and in
$z_+$ and of class $C^1$ in $\xi$, because $f$ is of
class $C^1$ by the Lipschitz Lemma~\ref{le:f}.
Hence by the uniform contraction principle,
see e.g.~\cite{Chow-Hale-Bifurcation-Theory}, the map 
$\lambda:S^u_\eps\times \Bb^+\to Z^T$ assigning to
$(\gamma,z_+)$  the unique fixed point
$\xi^T_{\gamma,z_+}$ of $\Psi^T_{\gamma,z_+}$ is of class
$C^1$. So is its composition with (linear)
evaluation $ev_0:Z^T\to X$, $\xi\mapsto\xi(0)$,
and (linear) projection $\pi_-:X\to X^-$.
But overall this composition is $G^T$ by 
definition~(\ref{eq:G^T}).
This proves that $G^T$, thus $\Gg$, is of class $C^1$
in $\gamma$ and $z_+$.

Consider the heat flow trajectory
$\tilde\eta:[0,T]\to X$, 
$s\mapsto\phi_s\gamma_T=\phi_{s-T}\gamma$.
By Remark~\ref{rem:unstable-manifold}
it takes values in $X^-$, because
$\tilde\eta(T)=\gamma\in S^u_\eps=\p W^u_\eps\subset W^u$
lies in a descending disk.
Hence $\pi_+\tilde\eta(0)=0$
and $\pi_-$ leaves $\tilde\eta$ pointwise invariant.
An argument as in Remark~\ref{rem:unstable-manifold}
(using likewise forward uniqueness)
shows that $\tilde\eta=\xi^T_{\gamma,0}$.
Thus
$
     G^T_\gamma(0)
     :=\pi_-\xi^T_{\gamma,0}(0)
     =\pi_-\tilde\eta(0)
     =\tilde\eta(0)
     =\gamma_T
$.
To get the desired representation of 
$\mathrm{graph}\, G^T_\gamma$ observe that
\begin{equation}\label{eq:G=xi}
     \Gg^T_\gamma(z_+)
     :=\left( G^T_\gamma(z_+),z_+\right)
     =\left(\pi_-\xi^T_{\gamma,z_+}(0),
     \pi_+ \xi^T_{\gamma,z_+}(0)\right)
     = \xi^T_{\gamma,z_+}(0)
\end{equation}
by definition~(\ref{eq:G^T}).
The first identity also uses the fixed point
property and the initial condition proved in step~2.
The final identity is by $\pi_-\oplus\pi_+=\1_Y$.
\qed
\end{proof}

\noindent
{\bf Step 4.}
{\it The map $\Gg$ is of class $C^1$.
The map $T\mapsto\Gg(T,\gamma,z_+)$ is Lipschitz
continuous and its derivative is locally H\"older
continuous with exponent $\alpha=\frac{1}{8}$.
The map $\gamma\mapsto\Gg(T,\gamma,z_+)$ is
Lipschitz continuous.
} 

\begin{proof}
By step~3 the map $\Gg$ is of class $C^1$ in
the $\gamma$ and $z_+$ variables.
By compactness of the
$(k-1)$-dimensional sphere
$S^u_\eps$, the derivative of $\Gg$ with
respect to $\gamma$ is bounded. Thus
$\Gg$ is Lipschitz continuous in $\gamma$.

{\it We prove that $\Gg$ is Lipschitz continuous in $T$.}
Fix $T\ge T_0>0$,
$\gamma\in S^u_\eps$, and $z_+\in\Bb^+$.
The fixed point $\xi^T:=\xi^T_{\gamma,z_+}$
of $\Psi^T$ is given by~(\ref{eq:Psi}) and
the one of $\Psi^{T+\tau}$ by
\begin{equation*}
\begin{split}
     \xi^{T+\tau}(s)
     :=\xi^{T+\tau}_{\gamma,z_+}(s)
    &=e^{-sA}z_+
     +\int_0^s e^{-(s-\sigma)A}\pi_+
     f(\xi^{T+\tau}(\sigma))
     \, d\sigma
     \\
    &\quad
     +e^{-(s-T-\tau)A^-}\gamma
     -\int_s^{T+\tau} e^{-(s-\sigma)A^-}\pi_-
     f(\xi^{T+\tau}(\sigma))
     \, d\sigma.
\end{split}
\end{equation*}
For $s\in[0,T]$ and $\tau\ge0$ we obtain,
analogously to~(\ref{eq:Psixi-bounded}), the estimate
\begin{equation*}
\begin{split}
    &\Norm{\xi^{T+\tau}(s)-\xi^T(s)}_X
   \\
    &\le\int_0^s\bigl\| e^{-(s-\sigma)A}\pi_+\bigr\|
     _{\Ll(Y,X)}
     \Norm{f(\xi^{T+\tau}(\sigma))-f(\xi^T(\sigma))}_Y
     \, d\sigma
     +\bigl\| \bigl(e^{\tau A^-}-\1\bigr)
     e^{-(s-T)A^-}\gamma\bigr\|_X
   \\
    &\quad
     +\int_s^T\bigl\| e^{-(s-\sigma)A^-}\pi_-\bigr\|
     _{\Ll(Y,X)}
     \Norm{f(\xi^{T+\tau}(\sigma))-f(\xi^T(\sigma))}_Y
     \, d\sigma
   \\
    &\quad
     +\int_T^{T+\tau}
     \bigl\| e^{-(s-\sigma)A^-}\pi_-\bigr\|_{\Ll(Y,X)}
     \Norm{f(\xi^{T+\tau}(\sigma))}_Y
     \, d\sigma
   \\
    &\le c\kappa(\rho)
     \Norm{\xi^{T+\tau}-\xi^T}_{C^0([0,T],X)}
     \left(
     \int_0^s\frac{e^{-(s-\sigma)\mu}}
     {(s-\sigma)^{\frac{3}{4}}}\, d\sigma
     +\int_s^T e^{(s-\sigma)\mu}\, d\sigma
     \right)
   \\
    &\quad
     +\tau c\abs{\lambda_1}
     \cdot ce^{(s-T)\mu}
     \Norm{\gamma}_X
     +c\kappa(\rho)\rho_0
     \int_T^{T+\tau} e^{(s-\sigma)\mu}
     \, d\sigma
   \\
    &\le c\kappa(\rho)
     \left(\frac{8}{\mu^{1/4}}
     +\frac{1}{\mu}\right)
     \Norm{\xi^{T+\tau}-\xi^T}_{C^0([0,T],X)}
     +\tau\rho_0 c^2\abs{\lambda_1} e^{(s-T)\mu}
   \\
    &\quad
     + c\kappa(\rho)\rho_0\frac{e^{(s-T)\mu}}{\mu}
     \left(1-e^{-\tau\mu}\right)
   \\
    &\le\frac{1}{8}
     \Norm{\xi^{T+\tau}-\xi^T}_{C^0([0,T],X)}
     +\tau\rho_0\left(c^2\abs{\lambda_1}+1\right)
     e^{(s-T)\mu}.
\end{split}
\end{equation*}
Inequality two uses the Lipschitz
Lemma~\ref{le:f} for $f$ and the exponential
estimates of Proposition~\ref{prop:semigroup-NEW}.
To estimate the second of the four terms recall that
$X^-$ is spanned by an orthonormal basis of eigenvectors
of $A^-\in\Ll(X^-)$ corresponding to the
eigenvalues $\lambda_1\le\ldots\le\lambda_k<0$.
Hence $\norm{A^-}=-\lambda_1=\abs{\lambda_1}$.
Since $e^{-s\mu}\le 1$ we get that
\begin{equation}\label{eq:1-e}
     \frac{1-e^{-\mu\tau}}{\mu}
     =\int_0^\tau e^{-s\mu}\; ds
     \le\tau.
\end{equation}
Thus~\cite[Prop.~1.3.6.~(ii)]{LUNARDI-InetSem}
implies the estimate
\begin{equation}\label{eq:e-1}
     \norm{e^{\tau A^-}-\1}
     _{\Ll(X^-)}
     =\Norm{A^-\int_0^\tau e^{\sigma A^-}\, d\sigma}
     _{\Ll(X^-)}
     \le\abs{\lambda_1}
     \int_0^\tau ce^{-\sigma\mu}\, d\sigma
     \le\tau c\abs{\lambda_1}.
\end{equation}
Coming back to inequality two above,
we used that $\xi^{T+\tau}\in Z^{T+\tau}$ in term four
takes values in $\Bb_{\rho_0}$ by step~1.
Inequality three uses~(\ref{eq:int-Gamma})
and~(\ref{eq:int-preGamma}) for the first two
integrals and that
$\gamma\in S^u_\eps\subset\Bb_{\rho_0}$.
Inequality four uses~(\ref{eq:1-e}) and
smallness~(\ref{eq:rho-backward})
of $\rho$ which also implies that
$c\kappa(\rho)\le1$.
Now set $c_1=2(c^2\abs{\lambda_1}+1)$
and take the supremum over $s\in[0,T]$ to
get that
\begin{equation}\label{eq:xi-xi}
     \Norm{\xi^{T+\tau}-\xi^T}_{C^0([0,T],X)}
     \le \rho_0 c_1\tau.
\end{equation}
Therefore by~(\ref{eq:G=xi}) we get
$
     \norm{\Gg^{T+\tau}_\gamma(z_+)-\Gg^T_\gamma(z_+)}_X
     =\norm{\xi^{T+\tau}(0)-\xi^T(0)}_X
     \le \rho_0 c_1\tau
$
and this proves that
$\Gg(T,\gamma,z_+)=\Gg^T_\gamma(z_+)$ is Lipschitz
continuous in $T$. The difference $\xi^{T+\tau}-\xi^T$
is illustrated by Figure~\ref{fig:fig-xi-xi}.
\begin{figure}
  \includegraphics{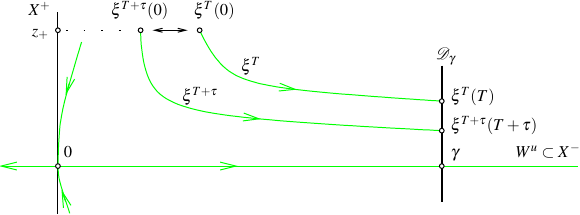}
  \caption{The difference 
           $\Gg^{T+\tau}_\gamma(z_+)-\Gg^T_\gamma(z_+)$}
  \label{fig:fig-xi-xi}
\end{figure}

{\it We prove that 
$T\mapsto\frac{d}{dT}\Gg(T,\gamma,z_+)$
is locally H\"older.}
Consider the derivative
\begin{equation*}
\begin{split}
     \Theta^T(s)
    &:=\left.\tfrac{d}{d\tau}\right|_{\tau=0}
     \xi^{T+\tau}_{\gamma,z_+}(s)
   \\
    &=\int_0^s e^{-(s-\sigma)A}\pi_+
     \bigl(df|_{\xi^T(\sigma)}\circ \Theta^T(\sigma)\bigr)
     d\sigma
     \;+\; A^- e^{-(s-T)A^-}\gamma
     \\
    &\quad
     -e^{-(s-T)A^-}\pi_- f(\xi^T(T))
     -\int_s^T e^{-(s-\sigma)A^-}\pi_-
     \Bigl(df|_{\xi^T(\sigma)}\circ \Theta^T(\sigma)\Bigr)
     d\sigma.
\end{split}
\end{equation*}
Since $\frac{d}{dT}\Gg(T,\gamma,z_+)=\Theta^T(0)$
by~(\ref{eq:G=xi}), it remains to show that the map
$T\mapsto \Theta^T(0)\in X$ is locally H\"older continuous.
By definition of $\Theta^T$ we get the identity
\begin{equation*}
\begin{split}
     \Theta^{T+\tau}(s)-\Theta^T(s)
    &=\int_0^s e^{-(s-\sigma)A}\pi_+
     df|_{\xi^{T+\tau}(\sigma)}
     \left( \Theta^{T+\tau}(\sigma)-\Theta^T(\sigma)\right)
     \, d\sigma
  \\
    &\quad
     +\int_0^s e^{-(s-\sigma)A}\pi_+
     \left(
     df|_{\xi^{T+\tau}(\sigma)}-df|_{\xi^T}(\sigma)
     \right)\circ 
     \Theta^T(\sigma)
     \, d\sigma
   \\
    &\quad
     +\bigl(e^{\tau A^-}-\1\bigr)
     A^- e^{-(s-T)A^-}\gamma
     -\bigl(e^{\tau A^-}-\1\bigr)
     e^{-(s-T)A^-}\pi_- f(\xi^{T+\tau}(T+\tau))
   \\
    &\quad
     -e^{-(s-T)A^-}\pi_-
     \left(
     f(\xi^{T+\tau}(T+\tau))-f(\xi^T(T))
     \right)
   \\
    &\quad
     -\int_s^T e^{-(s-\sigma)A^-}\pi_-
     df|_{\xi^{T+\tau}(\sigma)}
     \left( \Theta^{T+\tau}(\sigma)-\Theta^T(\sigma)\right)
     \, d\sigma
  \\
    &\quad
     -\int_s^T e^{-(s-\sigma)A^-}\pi_-
     \left(
     df|_{\xi^{T+\tau}(\sigma)}-df|_{\xi^T}(\sigma)
     \right)\circ 
     \Theta^T(\sigma)
     \, d\sigma
   \\
    &\quad
     -\int_T^{T+\tau}
     e^{-(s-\sigma)A^-}\pi_-
     df|_{\xi^{T+\tau}(\sigma)}\circ \Theta^{T+\tau}(\sigma)
     \, d\sigma
\end{split}
\end{equation*}
for all $s\in[0,T]$ and $\tau\ge0$.
To obtain terms one and two we added zero, similarly
for terms four and five and terms six and seven.
Abbreviate the norm of the Banach space $C^0([0,T],X)$ by
$\norm{\cdot}_{C^0_T}$
and combine terms one and six and terms two
and seven to get that
\begin{equation*}
\begin{split}
    &\Norm{\Theta^{T+\tau}(s)-\Theta^T(s)}_X
   \\
    &\le c\kappa(\rho)
     \Norm{\Theta^{T+\tau}-\Theta^T}_{C^0_T}
     \left(
     \int_0^s\frac{e^{-(s-\sigma)\mu}}
     {(s-\sigma)^{\frac{3}{4}}}\, d\sigma
     +\int_s^T e^{(s-\sigma)\mu}\, d\sigma
     \right)
   \\
    &\quad
     +c\kappa_*
     \Norm{\xi^{T+\tau}-\xi^T}_{C^0_T}
     \Norm{\Theta^T}_{C^0_T}
     \left(
     \int_0^s\frac{e^{-(s-\sigma)\mu}}
     {(s-\sigma)^{\frac{3}{4}}}\, d\sigma
     +\int_s^T e^{(s-\sigma)\mu}\, d\sigma
     \right)
   \\
    &\quad
     +\tau c^2\abs{\lambda_1}
     \left(
     \abs{\lambda_1}\cdot\Norm{\gamma}_X
     +\kappa(\rho)\Norm{\xi^{T+\tau}(T+\tau)}_X
     \right)
     e^{(s-T)\mu}
   \\
    &\quad
     +c\kappa(\rho)
     \Norm{\xi^{T+\tau}(T+\tau)-\xi^T(T)}_X
     e^{(s-T)\mu}
     +c\kappa(\rho)\Norm{\Theta^{T+\tau}}_{C^0_{T+\tau}}
     \int_T^{T+\tau} e^{(s-\sigma)\mu}
     \, d\sigma
   \\
    &\le c\kappa(\rho)
     \left(\frac{8}{\mu^{1/4}}
     +\frac{1}{\mu}\right)
     \Norm{\Theta^{T+\tau}-\Theta^T}_{C^0_T}
     +\tau c\kappa_*\rho_0^2 c_1^2
     \left(
     \frac{8}{\mu^{1/4}}+\frac{1}{\mu}
     \right)
   \\
    &\quad
     +\tau c\rho_0\abs{\lambda_1}
     \left(c\abs{\lambda_1}+1\right)
     +
     \tau^{\frac{1}{8}}\rho_0\left(
     c_3T^{\frac{3}{8}}+c_4\tau^{\frac{1}{8}}
     \right)
     +\frac{\rho_0 c_1}{\mu}
     \left(1-e^{-\tau\mu}\right)
   \\
    &\le\frac{1}{8}
     \Norm{\Theta^{T+\tau}-\Theta^T}_{C^0_T}
     +\rho_0\left(
     \tau c_1^2
     +\tau c\abs{\lambda_1} c_1
     +
     \tau^{\frac{1}{8}}\left(
     c_3T^{\frac{3}{8}}+c_4\tau^{\frac{1}{8}}
     \right)
     +\tau c_1\right)
\end{split}
\end{equation*}
for $s\in[0,T]$ and $\tau\ge0$.
Inequality one uses the exponential
decay Proposition~\ref{prop:semigroup-NEW},
the Lipschitz Lemma~\ref{le:f}
for $f$, and its Corollary~\ref{cor:f}.
To obtain line three we used~(\ref{eq:e-1}).
In line five we used backward time 
exponential decay~(\ref{eq:c-NEW}).
\\
To see inequality two observe the following.
Estimate the first integral in lines one and two
by~(\ref{eq:int-Gamma}), the second one
by~(\ref{eq:int-preGamma}).
Recall that $\gamma\in\Bb_{\rho_0}$ by our local setup.
Apply estimate~(\ref{eq:xi-xi}).
In addition, use~(\ref{eq:xi-xi}) to conclude that
$\norm{\Theta^T(s)}_X\le\rho_0 c_1$
whenever $s\in[0,T]$.
(Note that the same is true when $T$ is replaced by
$T+\tau$.)
The elements of $Z^T$ (and $Z^{T+\tau}$) take
values in $\Bb_{\rho_0}$ by step~1. 
Use that $c\kappa(\rho)\le1$
by~(\ref{eq:rho-backward}) and that
$e^{(s-T)\mu}\le1$.
To estimate the difference
$\xi^{T+\tau}(T+\tau)-\xi^T(T)\in X$ in line four is
surprisingly subtle. 
{\it This estimate will be carried
out separately below}; see~(\ref{eq:xi-xi-T-tau}) for
the result used in inequality two
and for the definition of $c_3$ and $c_4$.
\\
To obtain inequality three we used
smallness~(\ref{eq:rho_0-backward})
and~(\ref{eq:rho-backward}) of $\rho$ and
estimate~(\ref{eq:1-e}).
Now take the supremum over $s\in[0,T]$ to get
\begin{equation}\label{eq:V-V}
     \Norm{\Theta^{T+\tau}-\Theta^T}_{C^0([0,T],X)}
     \le c_T\rho_0 \tau^{\frac{1}{8}}
\end{equation}
where
$\frac12
     c_T
     =
     c_3 T^{\frac{3}{8}}
     +\tau^{\frac{7}{8}}
     \left(
       c_1^2
       +c^2\lambda_1^2+c\abs{\lambda_1}
       +c_1
     \right)
     +c_4 \tau^{\frac{1}{8}}
$.
Thus
\begin{equation*}
     \Norm{
     \tfrac{d}{dT}\Gg(T+\tau,\gamma,z_+)
     -\tfrac{d}{dT}\Gg(T,\gamma,z_+)
     }_X
     =\Norm{\Theta^{T+\tau}(0)-\Theta^T(0)}_X
     \le c_T\rho_0\tau^{\frac{1}{8}},
\end{equation*}
that is
$T\mapsto\frac{d}{dT}\Gg(T,\gamma,z_+)$ is locally H\"older
continuous with exponent $\alpha=\frac{1}{8}$.

{\it As mentioned above it remains to
estimate the $W^{1,2}$ norm of the difference:}
\begin{equation*}
\begin{split}
     \xi^{T+\tau}(T+\tau)-\xi^T(T)
    &=\left(e^{-\tau A}-\1\right) e^{-T A}z_+
  \\
    &\quad
     +\int_0^T e^{-(T+\tau-\sigma)A}\pi_+
     \left(
     f(\xi^{T+\tau}(\sigma))-f(\xi^T(\sigma))
     \right)
     \, d\sigma
  \\
    &\quad
     +\int_0^T
     \left(e^{-\tau A}-\1\right)
     e^{-(T-\sigma)A}\pi_+ f(\xi^T(\sigma))
     \, d\sigma
   \\
    &\quad
     +\int_T^{T+\tau}
     e^{-(T+\tau-\sigma)A}\pi_+
     f(\xi^{T+\tau}(\sigma))
     \, d\sigma.
\end{split}
\end{equation*}
We added zero to obtain terms II and III
in this sum I+II+III+IV of four.

I) Concerning term one we get
\begin{equation*}
\begin{split}
     \Norm{\left(e^{-\tau A}-\1\right) e^{-TA}z_+}_X
    &=\Norm{
     \int_0^\tau -A
     e^{-sA} e^{-TA}z_+ \, ds
     }_X
   \\
    &\le\int_0^\tau
     \Norm{e^{-sA}\pi_+}_{\Ll(X)}
     \Norm{Ae^{-TA}\pi_+}_{\Ll(X)}
     \Norm{z_+}_X
     \, ds
   \\
    &\le\int_0^\tau
     ce^{-s\mu} \left(\frac{c^\prime C}{T}
     e^{-T\mu}\right)
     \rho_0
     \, ds
   \\
    &\le\frac{cc^\prime C\rho_0e^{-T_0\mu}}{T_0}\;\tau.
\end{split}
\end{equation*}
The first identity even without norms
is standard; see
e.g.~\cite[Prop.~1.3.6.~(ii)]{LUNARDI-InetSem}.
To obtain inequality one we permuted $A$ and 
$e^{-sA}$; see e.g.~\cite[Thm.~1.3.3.~(i)]{LUNARDI-InetSem}.
Here we used that $e^{-TA}z_+\in W^{2,1}=D(A)$
since $T>0$.
Compare the above estimate on $X^+$
with the corresponding estimate~(\ref{eq:e-1}) on the
finite dimensional vector space $X^-$ and note how
boundedness of $A^-$ simplifies~(\ref{eq:e-1}).
Inequality two uses that the norms
$\norm{A\cdot}_{1,2}$ and $\norm{\cdot}_{3,2}$
are equivalent with constant $c^\prime$
by compactness of $S^1$ and $A$ being of second order.
The regularity-for-singularity
estimate~(\ref{eq:reg-for-sing-NEW}) with constant
$C=C(\mu)$ allows to get
from $W^{3,2}$ back to $W^{1,2}$ catching a factor
$CT^{-1}$. The final step uses~(\ref{eq:1-e}).

II)~For term two use estimate~(\ref{eq:xi-xi})
and the fact that $c\kappa(\rho)\le 1$
by~(\ref{eq:rho-backward}) to get that
\begin{equation*}
\begin{split}
     \int_0^T\Norm{ e^{-(T+\tau-\sigma)A}\pi_+
     \left(
     f(\xi^{T+\tau}(\sigma))-f(\xi^T(\sigma))
     \right)}_X
     d\sigma
    &\le\frac{\rho_0 c_1\tau}{\mu^\frac{1}{4}}
     \int_{\tau\mu}^{\tau\mu+T\mu}
     \frac{e^{-s}}{s^{\frac{3}{4}}}ds
   \\
    &\le\frac{\rho_0 c_1\tau}{\mu^\frac{1}{4}}\Gamma(\tfrac{1}{4})
   \\
    &\le\frac{4\rho_0 c_1}{\mu^\frac{1}{4}}\;\tau.
\end{split}
\end{equation*}
We also used the
definition of the $\Gamma$ function
after~(\ref{eq:int-preGamma})
and its functional equation.

III) Term three requires similar techniques as term one,
but their application requires more care.
Namely, it is crucial not to deal with the $\Ll(L^1,W^{3,2})$ norm
in one go, but to decompose it into a product involving
$\Ll(L^1,W^{1,q})$ and $\Ll(W^{1,q},W^{3,2})$ norms
where $q$ is any real strictly larger than the
order (two) of the differential operator $A$.
This way we avoid catching either a factor $s^{-\alpha}$ or
$T^{-\alpha}$ with $\alpha\ge 1$ when trading
regularity for singularity via~(\ref{eq:reg-for-sing-NEW}).
Each of these factors would void our estimate,
since they are not integrable locally
near zero. Pick $q>2$. 
Similarly as in case of term
one we obtain that
\begin{equation*}
\begin{split}
   &\int_0^T
    \Norm{\left(e^{-\tau A}-\1\right)
    e^{-(T-\sigma)A}\pi_+ f(\xi^T(\sigma))
    }_X
    \, d\sigma
   \\
    &\le\int_0^T\int_0^\tau
     \Norm{Ae^{-sA}e^{-(T-\sigma)A}\pi_+
     f(\xi^T(\sigma))
     }_{W^{1,2}}
     \, ds\, d\sigma
   \\
    &\le\int_0^T\int_0^\tau
     c^\prime\Norm{e^{-sA}e^{-(T-\sigma)A}\pi_+
     f(\xi^T(\sigma))
     }_{W^{3,2}}
     \, ds\, d\sigma
   \\
    &\le c^\prime\kappa(\rho)\rho_0
     \int_0^T\int_0^\tau
     \Norm{e^{-sA}\pi_+}_{\Ll(W^{1,q},W^{3,2})}
     \Norm{e^{-(T-\sigma)A}\pi_+}_{\Ll(L^1,W^{1,q})}
     \, ds\, d\sigma
   \\
    &\le c^\prime C^\prime C^{\prime\prime}\rho_0
     \int_0^\tau
     e^{-s\mu} s^{-\frac{3}{4}-\frac{1}{2q}}\, ds\,
     \int_0^T
     e^{-(T-\sigma)\mu}
     (T-\sigma)^{-\frac{1}{2}-\frac{1}{2q}}\, d\sigma
   \\
    &=c^\prime C^\prime C^{\prime\prime}\rho_0
     \int_0^\tau
     e^{-s\mu} s^{-\frac{7}{8}}\, ds\,
     \int_0^T
     e^{-s\mu} s^{-\frac{5}{8}}\, ds
   \\
    &\le
     22c^\prime C^\prime C^{\prime\prime}\rho_0
     T^{\frac{3}{8}} \tau^{\frac{1}{8}}.
\end{split}
\end{equation*}
Inequality three uses once more that
$\norm{\xi^T(\sigma))}_X\le\rho_0$ by step~1.
Note that $\kappa(\rho)\le 1$.
Inequality four uses twice
the regularity-for-singularity
estimate~(\ref{eq:reg-for-sing-NEW})
with constants
$C^\prime$ and $C^{\prime\prime}$, respectively.
The exponent
of $s$ shows that concerning integrability we could
have picked any $q>2$.
In the final inequality we dropped the factors
$e^{-s\mu}\le 1$ and carried out the integrals.

IV) Concerning term four  we get the estimate
\begin{equation*}
     \int_T^{T+\tau}\Norm{ e^{-(T+\tau-\sigma)A}\pi_+
     f(\xi^{T+\tau}(\sigma))}_X
     d\sigma
     \le\rho_0
     \int_0^{\tau} e^{-s\mu} s^{-\frac{3}{4}}
     \, ds
     \le 4\rho_0\tau^{\frac{1}{4}}
\end{equation*}
by dropping the term $e^{-s\mu}\le 1$ under the integral.

{\it Side remark concerning the
estimate for term III:}
Unfortunately, we do not see any way to trade
$\tau^{1/8}$ for $\tau$ or, equivalently,
to trade $T^{3/8}$ for $T$. This has the
following consequences.
The positive power of $T$ obstructs the conclusion
that $\frac{d}{dT}\Gg$ is uniformly continuous in $T$.
The conclusion of local Lipschitz continuity is
obstructed by the factor $\tau^\alpha$ with
$\alpha=1/8<1$.
All we can say is that $\frac{d}{dT}\Gg$ is locally
H\"older continuous in $T$ 
with exponent $\alpha=1/8$.

To summarize, the above estimates show that
\begin{equation}\label{eq:xi-xi-T-tau}
     \Norm{\xi^{T+\tau}(T+\tau)-\xi^T(T)}_X
     \le\tau^{\frac{1}{8}}\rho_0
     \left(c_3T^{\frac{3}{8}}
     +c_4\tau^{\frac{1}{8}}\right)
\end{equation}
for $\tau\ge0$ and where
$$
     c_3:=22c^\prime C^\prime C^{\prime\prime}
     ,\qquad
     c_4:=4+\tau^{\frac{7}{8}}\left( cc^\prime C T_0^{-1}
     +4c_1\mu^{-\frac{1}{4}}\right).
$$
This concludes the proof of~(\ref{eq:V-V})
and therefore of step~4.
\qed
\end{proof}

\noindent
{\bf Step 5.}
{\it For $T\ge0$ the graph map
$\Gg^T_\gamma:\Bb^+\to X^-\oplus X^+$,
$z_+\mapsto\left( G^T_\gamma(z_+),z_+\right)$, and its
inverse $\pi_+|_{\Gg^T_\gamma(\Bb^+)}$ are both Lipschitz
continuous with respect to the $W^{1,2}$ norm.
In fact, the graph map is a
diffeomorphism onto its image.
} 

\begin{proof}
For $j=1,2$ pick $z_j\in\Bb^+$
and denote the fixed point
$\xi^T_{\gamma,z_j}$ of 
$\Psi^T=\Psi^T_{\gamma,z_j}$ by $\xi_j$.
Similarly to the estimate in the 
proof of step~2 we obtain for each
$s\in[0,T]$ that
$$
     \Norm{\xi_1(s)-\xi_2(s)}_X
     \le ce^{-s\mu}
     \Norm{z_1-z_2}_X
     +\frac{1}{2}
     \Norm{\xi_1-\xi_2}_{\exp}.
$$
Multiply by $e^{s\frac{\mu}{2}}$
and take the supremum over 
$s\in[0,T]$ to get
\begin{equation}\label{eq:xi1-xi2}
     \Norm{\xi_1-\xi_2}_{\exp}
     \le 2 c 
     \Norm{z_1-z_2}_X.
\end{equation}
By~(\ref{eq:G=xi}) this proves
Lipschitz continuity of 
$\Gg^T_\gamma$, namely
\begin{equation*}
     \Norm{\Gg^T_\gamma(z_1)
      -\Gg^T_\gamma(z_2)}_X
     =\Norm{\xi_1(0)-\xi_2(0)}_X
     \le\Norm{\xi_1-\xi_2}_{\exp}
     \le 2c\Norm{z_1-z_2}_X.
\end{equation*}
Next use that $\pi_+$ vanishes on $X^-$ and acts as the
identity on $X^+$ to see that $\pi_+$
is a left inverse of $\Gg^T_\gamma$.
Thus $\pi_+$ restricted to $\Gg^T_\gamma(\Bb^+)$ is 
its inverse.
But this restriction is of class $C^1$, because it is
of the form $\pi_+\circ\Gg^T_\gamma(z_+)$
where $\pi_+$ is linear and the map
$z_+\mapsto\Gg^T_\gamma(z_+)
:=\left(G^T_\gamma z_+, z_+\right)$ is
of class $C^1$ by step~3.

To see that the restriction of $\pi_+$ to
$\Gg^T_\gamma(\Bb^+)$ is Lipschitz continuous
consider the difference  
$\xi_1(0)-\xi_2(0)=\Psi^T\xi_1(0)-\Psi^T\xi_2(0)$ whose
right hand side is given by~(\ref{eq:Psi}). Apply
$\norm{a-b}\ge\norm{a}-\norm{b}$ with $a=z_1-z_2$
and~(\ref{eq:est-step-2}) for $s=0$
to get
\begin{equation*}
     \Norm{\xi_1(0)-\xi_2(0)}_X
     \ge \Norm{z_1-z_2}_X
     - c\kappa(\rho)\frac{2}{3\mu}
     \Norm{\xi_1-\xi_2}_{\exp}.
\end{equation*} 
By~(\ref{eq:xi1-xi2}) and the smallness
assumption~(\ref{eq:rho-backward}) on $\rho$ this
implies that
$$
     \Norm{\xi_1(0)-\xi_2(0)}_X
     \ge\left(1-c\kappa(\rho)
     \frac{2}{3\mu} 2c\right)
     \Norm{z_1-z_2}_X
     \ge\frac{1}{2}\Norm{z_1-z_2}_X
$$
which by~(\ref{eq:G=xi}) and the fact that $\pi_+$ left
inverts $\Gg^T_\gamma$ is equivalent to
\begin{equation}\label{eq:bi-Lip2}
     \Norm{\Gg^T_\gamma(z_1)
     -\Gg^T_\gamma(z_2)}_X
     \ge\frac{1}{2}
     \Norm{\pi_+\Gg^T_\gamma(z_1)
     -\pi_+\Gg^T_\gamma(z_2)}_X.
\end{equation}
This proves that $\pi_+$ is Lipschitz continuous on the
image of $\Gg^T_\gamma$.

By estimate~(\ref{eq:bi-Lip2}) and the estimate 
after~(\ref{eq:xi1-xi2}) the map $\Gg^T_\gamma$ is
bi-Lipschitz and therefore a homeomorphism onto
its image. Since the map and its inverse are both of
class $C^1$, it is in fact a diffeomorphism onto its image.
\qed
\end{proof}

\noindent
{\bf Step 6.} (Uniform convergence)
{\it 
$
     \norm{\Gg^\infty(z_+)
     -\Gg_\gamma^T(z_+)}_{W^{2,2}}
     \le \rho_0
     e^{-T\frac{\mu}{16}}
$
$\forall T\ge T_2$.
}

\begin{proof}
Assume $T\ge T_2$; see~(\ref{eq:back-traj}) below.
Consider the fixed point $\xi^T=\xi^T_{\gamma,z_+}$ of
$\Psi_{\gamma,z_+}^T$ on $Z^T$ and the fixed point
$\eta=\eta_{z_+}$ of $\Psi_{z_+}$
on $Z$ defined by~(\ref{eq:exp norm}).
Because $\Gg^T_\gamma(z_+)=\xi^T(0)$ by~(\ref{eq:G=xi}),
similarly $\Gg^\infty(z_+)=\eta(0)$,
it remains to estimate the difference
$\eta(0)-\xi^T(0)$.
Observe that, firstly, since the difference lies in
$X^-\subset C^\infty$ application of the $W^{2,2}$
norm makes sense. Secondly, by the respective
representation formulae, this difference depends on
the whole trajectories $\eta$ and
$\xi^T$. 
But while $\eta$ runs into the origin, the
trajectory $\xi^T$ ends on the fiber $\Dd_\gamma$
far away! So the difference
$\eta-\xi^T$ cannot converge to zero, as
$T\to\infty$, uniformly on $[0,T]$. However,
Figure~\ref{fig:fig-xi-xi} suggests that this
could be true on some initial part of the domain
$[0,T]$, say on $[0,\frac{1}{2}T]$. So step~A
is to reduce the problem to the smaller interval
$[0,\frac{1}{2}T]$. Step~B is to solve the
reduced problem. Here the key idea is to
\emph{suitably partition} both trajectories $\eta$ and
$\xi^T$ and compare due parts; see
Figure~\ref{fig:fig-exp-convergence}.
The fact that $\eta$ is
\emph{asymptotically well behaved}, i.e. exponentially
close to zero on $[T,\infty)$, enters frequently.
\begin{figure}
  \includegraphics{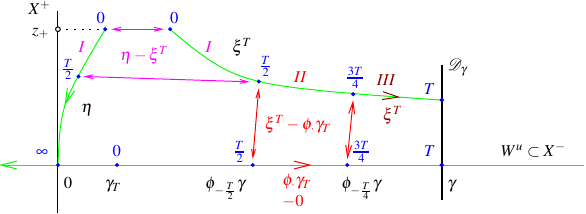}
  \caption{Time partitions and exponentially
           decaying differences}
  \label{fig:fig-exp-convergence}
\end{figure}

We proceed as follows:
 In step~A we estimate the (stronger)
$W^{2,2}$ norm of the difference $\eta(0)-\xi^T(0)$ by an
exponentially decaying function of $T$ plus the
supremum over $s\in[0,\frac{1}{2}T]$ of the (weaker) norm
$\norm{\eta(s)-\xi^T(s)}_{X}$. This reduction of a stronger
to a weaker norm is based on the \emph{key fact} that the
difference $\eta(0)-\xi^T(0)$ only involves $\pi_-$ terms.
Namely, these take values in $X^-$, hence in $C^\infty$.
In step~B we prove exponential decay of this sup norm.
Here we encounter again
the difference $\eta-\xi^T$, unfortunately on the whole
interval $[0,T]$. Now the \emph{key idea} is to decompose
this interval into three pieces, namely
$$
     I:=[0,\tfrac{1}{2}T]
     ,\qquad
     II:=[\tfrac{1}{2}T,\tfrac{3}{4} T]
     ,\qquad
     III:=[\tfrac{3}{4} T,T],
$$
as shown in Figure~\ref{fig:fig-exp-convergence}.
In fact an extra piece $[T,\infty)$ is brought in
by $\eta$.
On interval $I$ we pull out the supremum norm and use
smallness of the Lipschitz constant $\kappa(\rho)$ to
get a coefficient less than one to throw the whole
$(\eta-\xi^T)$ term on the left hand side. Off $I$
we apply the triangle inequality to deal with each term
$\eta$ and $\xi^T$ separately. Exponential decay
built into the definition~(\ref{eq:exp norm}) of $Z$
allows to handle $\eta$ on its whole remaining time
interval $[\frac{1}{2}T,\infty)$ in one go. It remains
to deal with $\xi^T$ on intervals $II$ and $III$. For
$\sigma\in II$ we exploit
(after adding zero) that both terms
$\xi^T(\sigma)-\phi_\sigma(\gamma_T)$ and
$\phi_\sigma(\gamma_T)$ individually decay
exponentially in $T$, uniformly in $\sigma\in II$.
For the first term this is simply true by
definition~(\ref{eq:ZT}) of $Z^T$. Concerning the
second term we use that $\gamma$ lies in the unstable
manifold. Hence $\phi_\sigma(\gamma_T)=\phi_t(\gamma)$
collapses exponentially fast into the origin, since
$t:=\sigma-T\in[-\frac{1}{2}T,-\frac{1}{4} T]$ and
the whole interval sets off to $-\infty$~\footnote{
  The argument relies on the right boundary
  of the $t$-interval running to $-\infty$, as
  $T\to\infty$. Therefore the right boundary of $II$
  needs to be strictly smaller than $T$, but at the
  same time be element of $[0,T]$ whichever $T$ we
  pick. Thus any $\alpha T$ with $0<\alpha<1$ is a
  good choice.
};
cf. Remark~\ref{rem:unstable-manifold}.
For interval $III$ the argument is \emph{analytic} and
\emph{cannot be guessed}
by Figure~\ref{fig:fig-exp-convergence}. The figure
even suggests trouble. Fortunately, we are not
concerned with the image of the trajectory, but
with the integral over its time parametrization.
In fact due to an abundance of negative powers
already the coarse estimate
$\norm{\xi^T(\sigma)}_X\le\rho_0$ is fine:
It leaves us with integrating
$e^{(s-\sigma)\mu}$ over $III$. But
$s\le\frac{1}{2} T$ by assumption and
$\sigma\ge\frac{3}{4} T$ on $III$.~\footnote{
  Exponential decay is achieved, if
  the left boundary of $III$ is of the
  form $\alpha T$ with $\alpha>\frac{1}{2}$.
}

Our choice of time partitions and
combinations of trajectory pieces which leads to
exponential decay in $T$ is
shown in Figure~\ref{fig:fig-exp-convergence}
where the upper labels of points are time.
It is instructive to figure out how the drawing changes
as $T$ tends to infinity. How do $\eta$ and $\xi^T$
change and how their time labels?
What happens to the lengths of the four double arrows?
Consider the pair of double arrows with
common point $\xi^T(T/2)$. What is the
asymptotic behavior of this point?

(A)~Abbreviate ${\tilde{X}}:=W^{2,2}$.
Note that by parabolic regularity
the heat flow trajectories $\eta$ and $\xi^T$ take values
in $C^\infty$ at strictly positive times. Recall that
$\gamma\in S^u_\eps\subset(X^-\cap \Bb_{\rho_0})$
by our local setup.
Use formula~(\ref{eq:Psi}) for $\xi^T$
and the one for $\eta$, see formula after~(\ref{eq:exp norm}),
together with the fact that the nonlinearity $f$ maps $X$
to $Y$ to obtain~\footnote{
  Here and throughout
  $\left(\int_a^b +\int_c^d\right) f$
  abbreviates $\int_a^b f+\int_c^d f$.
  }
\begin{equation*}
\begin{split}
    &\Norm{\eta(0)-\xi^T(0)}_{\tilde X}
   \\
     &\le
      \left(\int_0^{T/2}
      +\int_{T/2}^T\right)
      \Norm{e^{\sigma A^-}\pi_-}
      _{\Ll(Y,\tilde X)}
      \Norm{f\circ\eta(\sigma)-f\circ\xi^T(\sigma)}_Y 
      \,d\sigma
   \\
     &\quad
      +\Norm{e^{TA^-}\pi_-}
      _{\Ll(X,\tilde X)}\Norm{\gamma}_X
      +\int_T^\infty
      \Norm{e^{\sigma A^-}\pi_-}
      _{\Ll(Y,\tilde X)}
      \Norm{f\circ\eta(\sigma)}_Y 
      \, d\sigma
    \\
     &\le
      c\kappa(\rho)
      \Norm{\eta-\xi^T}_{C^0([0,\frac{T}{2}],X)}
      \int_0^{T/2}
      e^{-\sigma\mu}
      \, d\sigma
      +c\kappa(\rho)\cdot 2\rho_0
      \int_{T/2}^T e^{-\sigma\mu}
      \, d\sigma
   \\
     &\quad
      +c\rho_0 e^{-T\mu}
      +c\kappa(\rho)
      \rho_0
      \int_T^\infty
      e^{-\sigma\mu}
      \,d\sigma.
   \\
    &\le
     \frac{c\kappa(\rho)}{\mu}
     \Norm{\eta-\xi^T}_{C^0([0,\frac{T}{2}],X)}
     +
     \frac{2c\kappa(\rho)}{\mu}
     \rho_0 e^{-T\frac{\mu}{2}}
     +
     \frac{\rho_0}{8} e^{-T\frac{\mu}{2}}
     +
     \frac{c\kappa(\rho)}{\mu}
     \rho e^{-T\mu}
   \\
    &\le
     \frac{1}{8}
     \Norm{\eta-\xi^T}_{C^0([0,\frac{T}{2}],X)}
     +
     \frac12 \rho_0 e^{-T\frac{\mu}{2}}.
\end{split}
\end{equation*}
Inequality two uses the exponential decay
Proposition~\ref{prop:semigroup-NEW}~(c) and
the Lipschitz Lemma~\ref{le:f} for $f$ and $p=1$.
We also used definition~(\ref{eq:exp-T})
of the exp-$T$ norm and the fact that the
elements of $Z^T$ take values in
$\Bb_{\rho_0}$ by step~1 and those of $Z$ in
$\Bb_\rho\subset\Bb_{\rho_0}$ by
definition~(\ref{eq:exp norm}).
Inequalities three is by calculation
and definition of $T_2$.
Now use~(\ref{eq:rho-backward}).

(B)~Pick $s\in[0,\frac{T}{2}]$.
Similarly as in~(A) we get the estimate
\begin{equation*}
\begin{split}
    &\Norm{\eta(s)-\xi^T(s)}_X
   \\
     &\le
      \int_0^s\Norm{e^{-(s-\sigma)A}
      \pi_+}_{\Ll(Y,X)}
      \Norm{f\circ\eta(\sigma)-f\circ \xi^T(\sigma)}_Y 
      \, d\sigma
   \\
     &\quad
      +
      \left(
      \int_s^{\frac{T}{2}}
      +\int_{\frac{T}{2}}^{\frac{3T}{4}}
      +\int_{\frac{3T}{4}}^T
      \right)
      \Norm{e^{-(s-\sigma)A^-}\pi_-}_{\Ll(Y,X)}
      \Norm{f\circ\eta(\sigma)-f\circ\xi^T(\sigma)}_Y
      d\sigma
   \\
     &\quad
      +
      ce^{(s-T)\mu}\Norm{\gamma}_X
      +
      \int_T^\infty
      \Norm{e^{-(s-\sigma)A^-}\pi_-}
      _{\Ll(Y,X)}
      \Norm{f\circ\eta(\sigma)}_Y 
      d\sigma
    \\
\end{split}
\end{equation*}
\begin{equation*}
\begin{split}
    &\le
     c\rho_0 e^{-T\frac{\mu}{2}}
     +
     c\kappa(\rho)
     \Norm{\eta-\xi^T}
      _{C^0([0,\frac{T}{2}],X)}
     \left(
     \int_0^s
     \frac{e^{-(s-\sigma)\mu}}
     {(s-\sigma)^{\frac{3}{4}}}
     \, d\sigma
     +
     \int_s^{\frac{T}{2}}
     e^{(s-\sigma)\mu} d\sigma
     \right)
   \\
     &\quad
      +
      c\kappa(\rho)
      \int_{\frac{T}{2}}^{\frac{3T}{4}}
      e^{(s-\sigma)\mu}
      \left(
      \Norm{\xi^T(\sigma)
      -\phi_\sigma (\gamma_T)}_X
      +
      \Norm{\phi_\sigma(\gamma_T)}_X
      \right)
      d\sigma
   \\
     &\quad
      +
      c\kappa(\rho)
      \int_{\frac{3T}{4}}^T
      e^{(s-\sigma)\mu}
      \Norm{\xi^T(\sigma)}_X
      d\sigma
      +
      c\kappa(\rho)
      \Norm{\eta}_{\exp}
      \int_{\frac{T}{2}}^\infty
      e^{(s-\frac{3}{2}\sigma)\mu}
      d\sigma.
\end{split}
\end{equation*}
To get overall exponential decay in $T$ we have
split the domain of integration in three parts.
The domain of integration $\int_{T/2}^\infty$ in the last line
is not a misprint.

To continue the estimate consider the last three lines.
Now we explain how to
get to the corresponding three lines 
in~(\ref{eq:arrive-at}) below. Concerning line one
use the definition of $T_2$ and
recall that $s\in[0,T/2]$ and
use~(\ref{eq:int-preGamma}) and~(\ref{eq:int-Gamma}).
In line two we drop $e^{(s-\sigma)\mu}\le1$
and use that
$
     \norm{\xi^T(\sigma)
     -\phi_\sigma(\gamma_T)}_X
     \le\rho e^{-\sigma\frac{\mu}{2}}
$
by definition of $Z^T$ and that
\begin{equation}\label{eq:back-traj}
     \int_{\frac{T}{2}}^{\frac{3T}{4}}
     \Norm{\phi_{\sigma-T}\gamma}_X d\sigma
     =
     \int_{\frac{T}{8}}^{\frac{3T}{8}}
     \Norm{\phi_{-t-\frac{T}{8}}\gamma}_X  dt
     \le
     \int_{\frac{T}{8}}^{\frac{3T}{8}}
     \rho e^{-t\frac{\mu}{2}}\, dt
     \le
     \frac{2\rho}{\mu} e^{-T\frac{\mu}{16}}.
\end{equation}
Here the identity is by change of variables
$t=-\sigma+\frac{7}{8}T$ and the first inequality
uses Remark~\ref{rem:unstable-manifold} for the
backward time trajectory
$\tilde\eta(-t)=\phi_{-t-T/8}(\gamma)$ defined for $t\ge0$.
To see this note that $\tilde\eta(-t)\to 0$, as $t\to\infty$,
because $\gamma$ lies in the descending sphere
$S^u_\eps$ by assumption.
Observe that the image of $\tilde\eta$ is contained
in the backward flow invariant set
$\phi_{-T/8}\overline{W^u_\eps}$ which
by assumption on $T_2$ is itself contained
in $\Bb^-:=\Bb_{\rho/2c}\cap X^-\subset\Bb_\rho$.
By the argument in
Remark~\ref{rem:unstable-manifold}
the solution
$\tilde\eta$ is equal to the unique fixed point of the map
$\Phi_\gamma$.
In particular, it holds that $\tilde\eta\in Z^u$
and therefore
$\norm{\tilde\eta(-t)}_X\le\rho e^{-t\mu/2}$
for every $t\ge0$.
To summarize, line two is bounded from above by
\begin{equation*}
      c\kappa(\rho)\cdot\rho
      \left(
     \int_{\frac{T}{2}}^{\frac{3T}{4}}
     e^{-\sigma\frac{\mu}{2}}
     \, d\sigma
     +\frac{2e^{-T\frac{\mu}{16}}}{\mu}
     \right)
     \le
     c\kappa(\rho)\cdot\frac{2\rho}{\mu}
     \left(
     e^{-T\frac{\mu}{4}}
     +e^{-T\frac{\mu}{16}}
     \right).
\end{equation*}

In line three use
$\norm{\xi^T(\sigma)}_X\le\rho_0$ for any
$\xi^T\in Z^T$ by step~1 and
$\norm{\eta}_{\exp}\le\rho$ by definition of $Z$.
Carry out the integrals, in the second one
drop $e^{(s-\sigma)\mu}\le1$, to get
\begin{equation}\label{eq:arrive-at}
\begin{split}
     \Norm{\eta(s)-\xi^T(s)}_X
     &\le 
      \frac{\rho_0}{8} e^{-T\frac{\mu}{4}}
      +
      c\kappa(\rho)
      \left(\frac{8}{\mu^{1/4}}
      +\frac{1}{\mu}\right)
      \Norm{\eta-\xi^T}
      _{C^0([0,\frac{T}{2}],X)}
   \\
      &\quad
      +
      \frac{4c\kappa(\rho)}{\mu}\rho
      e^{-T\frac{\mu}{16}}
      +
      \frac{c\kappa(\rho)}{\mu}\rho_0
      e^{-T\frac{\mu}{4}}
      + 
      \frac{2c\kappa(\rho)}{\mu}
      \rho
      e^{-T\frac{\mu}{4}}
   \\
    &\le
     \frac{1}{8}
     \Norm{\eta-\xi^T}
      _{C^0([0,\frac{T}{2}],X)}
     +
     \frac{1}{2}
     \rho_0 e^{-T\frac{\mu}{16}}.
\end{split}
\end{equation}
The last step uses
smallness~(\ref{eq:rho-backward}) of $\rho$. Take the
sup over $s\in [0,\frac{T}{2}]$ to get
\begin{equation}\label{eq:xi-eta}
     \Norm{\eta-\xi^T}
     _{C^0([0,\frac{T}{2}],X)}
     \le\frac{4}{7} \rho_0 e^{-T\frac{\mu}{16}}.
\end{equation}
Hence
$
     \norm{\Gg^\infty(z_+)-\Gg^T_\gamma(z_+)}_{\tilde X}
     =\norm{\eta(0)-\xi^T(0)}_{\tilde X}
     \le \rho_0
     e^{-T\frac{\mu}{16}}
$,
for all $\gamma\in S^u_\eps$, times $T\ge T_2$, and
$z_+\in\Bb^+$ and this proves step~6.
\qed\end{proof}
The Sobolev embedding
$W^{2,2}(S^1)\hookrightarrow C^1(S^1)$
concludes
the proof of
Theorem~\ref{thm:backward-lambda-Lemma}.

\subsection{Proof of uniform $C^1$ convergence (Theorem~\ref{thm:uniform-C1})}

Theorem~\ref{thm:uniform-C1}
builds on the backward $\lambda$-Lemma,
Theorem~\ref{thm:backward-lambda-Lemma}.
So we may use any of the six steps of its proof.
The proof at hand takes two steps.
Fix $\gamma\in S^u_\eps$ and $z_+\in\Bb^+$.

\vspace{.1cm}
\noindent
{\bf Step I.} {\rm ($L^2$ extension)}
{\it 
$
     \norm{d\Gg^T_\gamma(z_+)
     v}_2
     \le 2 \norm{v}_2
$
for all $v\in \pi_+(L^2)$ and 
$T\ge T_1$.
}

\begin{proof}
By the bounded linear transform
theorem~\cite[Thm.~I.7]{ReSi-I}
it suffices to pick $v$ in the dense subspace 
$X^+=\pi_+(X)$ of $\pi_+(L^2)$. 
Pick $\tau\ge0$ small.
Consider the fixed point
$\xi_{z_++\tau v}=\xi^T_{\gamma,z_++\tau v}\in Z^T$ of
$\Psi^T_{\gamma,z_++\tau v}$.
By~(\ref{eq:Psi}) the fixed point property means that
\begin{equation}\label{eq:xiT-tau}
\begin{split}
     \xi_{z_++\tau v}(s)
    &=
     e^{-sA}\left(z_++\tau v\right)
     +\int_0^s e^{-(s-\sigma)A}\pi_+
     f(\xi_{z_++\tau v}(\sigma))\, 
     d\sigma\\
    &\quad
     +e^{-(s-T)A^-}\gamma
     -\int_s^T e^{-(s-\sigma)A^-}
     \pi_-
     f(\xi_{z_++\tau v}(\sigma))\,
     d\sigma
\end{split}
\end{equation}
for every $s\in[0,T]$.
By the proof of step~3 the
composition of maps
$
     \tau
     \mapsto
     \xi_{z_++\tau v}
     \mapsto
     \xi_{z_++\tau v}(s)
$
is of class $C^1$.
Hence the linearization
is well defined and satisfies
\begin{equation}\label{eq:X_v}
\begin{split}
     X_v(s)
   :&=\left.\tfrac{d}{d\tau}\right|
     _{\tau=0}
     \xi_{z_++\tau v}(s)
  \\
    &=e^{-sA} v
     +\int_0^s e^{-(s-\sigma)A}\pi_+
     \left(
     df|_{\xi_{z_+}(\sigma)}\circ
     X_v(\sigma)\right)
     \, d\sigma\\
    &\qquad\quad\;\;
      -\int_s^T e^{-(s-\sigma)A^-}\pi_-
     \left(
     df|_{\xi_{z_+}(\sigma)}\circ
     X_v(\sigma)
     \right)
     \, d\sigma
\end{split}
\end{equation}
for each $s\in[0,T]$.
Use~(\ref{eq:G=xi}) to see that
$
     X_v(0)
     =\left.\frac{d}{d\tau}\right|
     _{\tau=0}
     \xi_{z_++\tau v}(0)
     =d\Gg^T_\gamma(z_+) v
$.
To conclude the proof
it remains to show that
$\norm{X_v(0)}_2\le 2\norm{v}_2$.
Recall the estimate
\begin{equation}\label{eq:L^1-L^2}
     \Norm{e^{-sA}\pi_+}
     _{\Ll(L^2,X)}
     \le c s^{-\frac{1}{2}}e^{-s\mu}, \qquad s>0,
\end{equation}
provided by
Proposition~\ref{prop:semigroup-NEW}.
This motivates, cf.~\cite{Henry-81-GeomTheory},
to define the weighted exp norm
\begin{equation*}
     \Norm{X_v}_{\frac{1}{2},\exp}=
     \Norm{X_v}_{\frac{1}{2},\exp,T}
     :=\sup_{s\in[0,T]}
     s^{\frac{1}{2}} e^{s\frac{\mu}{2}}
     \Norm{X_v(s)}_X.
\end{equation*}
This choice allows to estimate
$\norm{X_v(s)}_X$ (up to a singular factor)
in terms of $\norm{v}_2$ instead of $\norm{v}_X$.
Namely, by~(\ref{eq:X_v}) and since
$v\in X^+\subset X\hookrightarrow L^2$ we obtain
that
\begin{equation*}
\begin{split}
     s^{\frac{1}{2}}e^{s\frac{\mu}{2}}
     \Norm{X_v(s)}_X
    &\le
     s^{\frac{1}{2}}e^{s\frac{\mu}{2}}
     \Norm{e^{-sA}\pi_+}
     _{\Ll(L^2,X)}
     \Norm{v}_2
     \\
    &\quad +
     s^{\frac{1}{2}}e^{s\frac{\mu}{2}}
     \int_0^s
     \Norm{e^{-(s-\sigma)A}\pi_+}
     _{\Ll(Y,X)} \kappa(\rho) 
     \Norm{X_v(\sigma)}_X
     \, d\sigma
     \\
    &\quad +
     s^{\frac{1}{2}}e^{s\frac{\mu}{2}}
     \int_s^T
     \Norm{e^{-(s-\sigma)A^-}\pi_-}
     _{\Ll(Y,X)} 
     \kappa(\rho) 
     \Norm{X_v(\sigma)}_X
     d\sigma
     \\
    &\le
     ce^{-s\frac{\mu}{2}}\Norm{v}_2
     + c\kappa(\rho)
     \Norm{X_v}_{\frac{1}{2},\exp}
     \int_0^s
     \frac{e^{-(s-\sigma)\frac{\mu}{2}}}
     {(s-\sigma)^{\frac{3}{4}}}
     \left(\frac{s}{\sigma}\right)
     ^{\frac{1}{2}}
     d\sigma
     \\
    &\quad +
     c\kappa(\rho)
     \Norm{X_v}_{\frac{1}{2},\exp}
     \int_s^T
     e^{\frac{3}{2}(s-\sigma)\mu}
     \left(\frac{s}{\sigma}\right)
     ^{\frac{1}{2}}
     d\sigma
     \\
    &\le
     ce^{-s\frac{\mu}{2}}\Norm{v}_2
     +c\kappa(\rho)
     \left(
     \frac{18}{\mu^{1/4}}
     +\frac{2}{3\mu}\right)
     \Norm{X_v}_{\frac{1}{2},\exp}
\end{split}
\end{equation*}
for every $s\in[0,T]$.
Inequality one uses that $\xi_{z_+}\in Z^T$
takes values in $B_{\rho_0}\subset\Uu$
by step~1. Hence Corollary~\ref{cor:f}
applies and provides the estimate for $df$.
In inequality two
we used that 
$\norm{X_v(\sigma)}_X
\le\sigma^{-\frac{1}{2}} e^{-\sigma\frac{\mu}{2}}
\norm{X_v}_{\frac{1}{2},\exp}$
by definition of the exp norm. 
We used~(\ref{eq:L^1-L^2}) to
obtain the first term
and Proposition~\ref{prop:semigroup-NEW}
to obtain the other two terms
of the sum. Inequality three will be proved below.
Now use smallness~(\ref{eq:rho-backward})
of $\rho$ and take the supremum
over $s\in[0,T]$ to obtain
\begin{equation}\label{eq:X_v-exp}
     \Norm{X_v}_{\frac{1}{2},\exp}
     \le 2c \Norm{v}_2.
\end{equation}

Concerning inequality three we need to estimate
the two integrals. Observe first of all that
$
     \int_s^T
     e^{\frac{3}{2}(s-\sigma)\mu}
     \left(\frac{s}{\sigma}\right)
     ^\frac{1}{2}
     \, d\sigma
     \le
     \int_s^T
     e^{\frac{3}{2}(s-\sigma)\mu}
     \, d\sigma
     \le
     \frac{2}{3\mu}
$
and
\begin{equation}\label{eq:3/4}
\begin{split}
    &\int_0^s
     \frac{e^{-(s-\sigma)\frac{\mu}{2}}}
     {(s-\sigma)^{\frac{3}{4}}}
     \left(\frac{s}{\sigma}
     \right)^{\frac{1}{2}}
     d\sigma
     \\
    &=
     \int_0^{s/2}
     \underbrace{
     e^{-(s-\sigma)\frac{\mu}{2}}}
       _{\le e^{-s\mu/4}}
     (\underbrace{s-\sigma}
       _{\ge s/2})^{-\frac{3}{4}}
     s^{\frac{1}{2}}
     \sigma^{-\frac{1}{2}}
     d\sigma
     +
     \int_{s/2}^s
     \frac{e^{-(s-\sigma)\frac{\mu}{2}}}
     {(s-\sigma)^{\frac{3}{4}}}
     \,(\, 
     \underbrace{s/\sigma}
       _{\le 2}
     \,)^{\frac{1}{2}}
     d\sigma
     \\
    &\le
     2^{\frac{3}{4}} s^{-\frac{1}{4}}
     e^{-s\frac{\mu}{4}}
     \int_0^{s/2}
     \sigma^{-\frac{1}{2}} d\sigma
     +
     \frac{2^{\frac{3}{4}}}{\mu^{\frac{1}{4}}}
     \Gamma(\tfrac{1}{4})
   \\
    &\le
     \frac{2}{\mu^{\frac{1}{4}}}
     +
     \frac{8}{\mu^{\frac{1}{4}}}.
\end{split}
\end{equation} 
Here we used that the last integral is equal to
$\sqrt{2s}$ and $h(s)
:=2^{\frac{5}{4}}s^{\frac{1}{4}}
e^{-s\frac{\mu}{4}}$
is bounded by $h(s_{max})=h(1/\mu)
=2(2/\mu e)^{1/4}$. 
Furthermore, we used~(\ref{eq:int-Gamma}).

We start over estimating $X_v(s)$, but now
at $s=0$ and in the $L^2$ norm.
Similarly as above, using that
$\norm{X_v(\sigma)}_X
\le 2c\sigma^{-\frac{1}{2}} 
e^{-\sigma\frac{\mu}{2}}\norm{v}_2$
by~(\ref{eq:X_v-exp}) we get
\begin{equation*}
\begin{split}
     \Norm{X_v(0)}_2
    &\le
     \Norm{v}_2
     +
     \int_0^T
     \Norm{e^{\sigma A^-}\pi_-}
     _{\Ll(L^1,L^2)} 
     \kappa(\rho) 
     \Norm{X_v(\sigma)}_X
     d\sigma
     \\
    &\le
     \Norm{v}_2
     +2c^2\kappa(\rho)
     \Norm{v}_2
     \int_0^T
     e^{-\frac{3}{2}\sigma\mu}
     \sigma^{-\frac{1}{2}}
     \, d\sigma
     \\
    &\le
     \Norm{v}_2
     +c^2\kappa(\rho)
     \left(\frac{6}{\mu^{1/4}}
     +\frac{3}{\mu^{5/4}}\right)
     \Norm{v}_2
     \le2\Norm{v}_2
\end{split}
\end{equation*} 
for $s\in[0,T]$.
Inequality one also
uses that $e^{-sA}$ restricts to a strongly
continuous semigroup on $L^2$ by
Proposition~\ref{prop:semigroup-NEW}
and that 
$\norm{\cdot}_{\Ll(L^1,L^2)}
\le\norm{\cdot}_{\Ll(L^1,X)}$
by the embedding 
$X\hookrightarrow L^2$.
Inequality four is by smallness~(\ref{eq:rho-backward})
of $\rho$. Concerning inequality three
we applied (for $s=0$)
the following consequence of H\"older's
inequality on the domain $[s,\infty)$, namely
\begin{equation}\label{eq:Hoelder}
     \int_s^\infty e^{-\frac{3}{2}\sigma\mu}
     \sigma^{-\frac{1}{2}} d\sigma
     \le
     \norm{e^{-\sigma\mu}
     }_{L^4}
     \norm{e^{-\sigma\frac{\mu}{2}}
     \sigma^{-\frac{1}{2}}
     }_{L^{4/3}}
     \le
     \left(3+\frac{3}{2\mu}\right)
     \frac{e^{-s\mu}}{\mu^{\frac{1}{4}}}
\end{equation}
for $s\ge 0$.
Here step two uses that
$\norm{e^{-\sigma\mu}}_{L^4}
=(1/4\mu)^{1/4} e^{-s\mu}$
by calculation and that
$$
     \norm{e^{-\sigma\frac{\mu}{2}}
     \sigma^{-\frac{1}{2}}}_{L^{\frac{4}{3}}}^{\frac{4}{3}}
     =
     \int_s^\infty
     e^{-\sigma\frac{2}{3}\mu}
     \sigma^{-\frac{2}{3}}
     \, d\sigma
     \le 
     \int_0^1\sigma^{-\frac{2}{3}}
     \, d\sigma
     +
     \int_1^\infty 
     e^{-\sigma\frac{2}{3}\mu}
     \, d\sigma
     =
     3+\frac{3}{2\mu} 
     e^{-\frac{2}{3}\mu}.
$$
This proves Step~I.
\qed\end{proof}

\noindent
{\bf Step II.} 
{\it 
$
     \norm{d\Gg^T_\gamma(z_+)v
     -d\Gg^\infty(z_+)v}_2
     \le e^{-T\frac{\mu}{16}}
     \norm{v}_2
$
$\;\forall T\ge T_0$
$\;\forall v\in \pi_+(L^2)$.
}

\begin{proof}
The proof of convergence of the linearized graph maps
should use convergence of the graph maps themselves.
Indeed~(\ref{eq:xi-eta}) is a key ingredient.
Another one is the Lipschitz estimate
for $df$ provided by Lemma~\ref{le:f}.

Pick $T\ge T_0$ and $v\in X^+$. Consider the fixed point
$\xi_{z_+}=\xi^T_{\gamma,z_+}$ of the strict contraction
$\Psi^T_{\gamma,z_+}$ on $Z^T$ and the fixed point
$\eta_{z_+}$ of $\Psi_{z_+}$ on $Z$. It is a side remark that
Theorem~\ref{thm:local-stable-manifold-graph}
is recovered by the present setup for $T=\infty$ and $\gamma:=0$.
For $\tau\ge0$ small $\xi_{z_++\tau v}$
satisfies the integral equation~(\ref{eq:xiT-tau})
and $\eta_{z_++\tau v}$ satisfies~(\ref{eq:xiT-tau})
with $T=\infty$; in particular, term three in that sum
disappears. Consider the linearizations
$
     X_v
     :=\left.\frac{d}{d\tau}
     \right|_{\tau=0}
     \xi^T_{\gamma,z_++\tau v}
$
and
$
     Y_v
     :=\left.\frac{d}{d\tau}
     \right|_{\tau=0}
     \eta_{z_++\tau v}
$.
Observe that $X_v$
satisfies the integral 
equation~(\ref{eq:X_v})
and $Y_v$ satisfies~(\ref{eq:X_v})
with $T=\infty$.
We know that $d\Gg^T_\gamma(z_+) v= X_v(0)$
by the identity following~(\ref{eq:X_v}),
similarly $d\Gg^\infty(z_+) v=Y_v(0)$.
It remains to estimate
$\norm{X_v(0)-Y_v(0)}_2$.
Define
$$
     \Norm{X_v}_*
     :=\sup_{s\in[0,\frac{1}{2}T]}
     s^{\frac{1}{2}}
     \Norm{X_v(s)}_X
$$
and abbreviate $\xi:=\xi^T_{\gamma,z_+}$
and $\eta:=\eta_{z_+}$.
Then we obtain the $L^2$ estimate
\begin{equation*}
\begin{split}
    &\Norm{X_v(0)-Y_v(0)}_2
   \\
    &\le \left(\int_0^{{\frac{T}{2}}}
     +\int_{{\frac{T}{2}}}^T\right)
     \Norm{e^{\sigma A^-}\pi_-}
     _{\Ll(L^1,L^2)}
     \Norm{
     df|_{\xi(\sigma)}
     \circ X_v(\sigma)
     -df|_{\eta(\sigma)}
     \circ Y_v(\sigma)
     }_Y \, d\sigma
   \\
    &\quad
     +\int_T^\infty
     \Norm{e^{\sigma A^-}\pi_-}
     _{\Ll(L^1,L^2)}
     \Norm{
     df|_{\eta(\sigma)}
     \circ Y_v(\sigma)
     }_Y \, d\sigma
   \\
    &\le
     \int_0^{{\frac{T}{2}}}
     ce^{-\sigma\mu}
     \Bigl(\kappa_*
     \underbrace{
      \Norm{\xi(\sigma)-\eta(\sigma)}_X
     }_{\text{$\le \frac{4}{7}\rho_0
        e^{-T\frac{\mu}{16}}$,~(\ref{eq:xi-eta})}}
     \Norm{X_v(\sigma)}_X
     +
     \kappa(\rho)
     \Norm{X_v(\sigma)-Y_v(\sigma)}_X
     \Bigr)
     d\sigma
   \\
    &\quad
     +\int_{{\frac{T}{2}}}^T
     ce^{-\sigma\mu}
     \kappa(\rho)
     \underbrace{
       \Norm{X_v(\sigma)}_X
     }_{\le 2c\sigma^{-\frac{1}{2}}
        e^{-\sigma\frac{\mu}{2}}
        \Norm{v}_2}
     \, d\sigma
     +\int_{{\frac{T}{2}}}^\infty
     ce^{-\sigma\mu}
     \kappa(\rho)
     \underbrace{
       \Norm{Y_v(\sigma)}_X
     }_{\le 2c\sigma^{-\frac{1}{2}}
        e^{-\sigma\frac{\mu}{2}}
        \Norm{v}_2}
     \, d\sigma
   \\
    &\le
     \frac{8}{7}\rho_0 c^2\kappa_*
     e^{-T\frac{\mu}{16}}\Norm{v}_2
     \int_0^{{\frac{T}{2}}}
     e^{-\sigma\frac{3}{2}\mu}
     \sigma^{-\frac{1}{2}}
     d\sigma
     +c\kappa(\rho)
     \Norm{X_v-Y_v}_*
     \int_0^{{\frac{T}{2}}}
     e^{-\sigma\mu}
     \sigma^{-\frac{1}{2}}
     d\sigma
   \\
    &\quad
     +
     4c^2\kappa(\rho)
     \Norm{v}_2
     \int_{{\frac{T}{2}}}^\infty
     e^{-\sigma\frac{3}{2}\mu}\sigma^{-\frac{1}{2}}
     d\sigma
   \\
    &\le
      \frac{1}{8}
      \Norm{X_v-Y_v}_*
     +
     \left(\frac{1}{4} e^{-T\frac{\mu}{16}}
     +\frac{1}{4} e^{-T\frac{\mu}{2}}\right)
     \Norm{v}_2.
\end{split}
\end{equation*}
Inequality two uses
that by the Lipschitz Lemma~\ref{le:f}
for $df$ and its Corollary~\ref{cor:f}
\begin{equation*}
\begin{split}
    &\Norm{
     df|_{\xi(\sigma)}
     \circ X_v(\sigma)
     -df|_{\eta(\sigma)}
     \circ Y_v(\sigma)
     }_Y
   \\
    &=
     \Norm{
     \left(df|_{\xi(\sigma)}
     -df|_{\eta(\sigma)}\right)
     \circ X_v(\sigma)
     +
     df|_{\eta(\sigma)}\circ 
     \left(X_v(\sigma)
     -Y_v(\sigma)\right)
     }_Y
   \\
    &\le
     \kappa_*
     \Norm{\xi(\sigma)-\eta(\sigma)}
     _X
     \Norm{X_v(\sigma)}_X
     +
     \kappa(\rho)
     \Norm{X_v(\sigma)-Y_v(\sigma)}_X
\end{split}
\end{equation*}
and
$
     \norm{df|_{\eta(\sigma)}
     \circ Y_v(\sigma)}_Y
     \le \kappa(\rho)
     \norm{Y_v(\sigma)}_X
$,
respectively. We treated
the integral over $[T/2,T]$
with the triangle inequality and incorporated
its $\eta$ part into the integral
over $[T/2,\infty)$.
Furthermore, use that
$\norm{\cdot}_{\Ll(L^1,L^2)}
\le\norm{\cdot}_{\Ll(L^1,X)}$
by the embedding 
$X\hookrightarrow L^2$,
then apply Proposition~\ref{prop:semigroup-NEW}.
Consider inequality three.
In the calculation above
we indicated how to estimate certain terms.
The estimates used are~(\ref{eq:xi-eta})
and~(\ref{eq:X_v-exp}). We also
used~(\ref{eq:X_v-exp}) for $Y_v$
with $T=\infty$.
In inequality four we
applied the estimate~(\ref{eq:Hoelder})
to deal with all integrals
and we used the smallness 
assumption~(\ref{eq:rho_0-backward}) on $\rho_0$
and~(\ref{eq:rho-backward}) on $\rho$.

It remains to prove exponential
decay of the weighted sup norm $\norm{\cdot}_*$
over the domain $[0,\frac{1}{2} T]$.
Fix $s\in[0,\frac{1}{2} T]$
and conclude similarly as above that
\begin{equation*}
\begin{split}
    &
     s^{\frac{1}{2}}
     \Norm{X_v(s)-Y_v(s)}_X
   \\
    &\le
     s^{\frac{1}{2}} c
     \int_0^s
     \frac{e^{-(s-\sigma)\mu}}
     {(s-\sigma)^{\frac{3}{4}}}
     \Bigl(\kappa_*
     \underbrace{
       \Norm{\xi(\sigma)
       -\eta(\sigma)}_X
     }_{\le\frac{4}{7}\rho_0e^{-T\frac{\mu}{16}}}
       \Norm{X_v(\sigma)}_X
     +\kappa(\rho)
       \Norm{X_v(\sigma)
       -Y_v(\sigma)}_X
     \Bigr) d\sigma
   \\
    &\quad
     +
     s^{\frac{1}{2}} c
     \int_s^{{\frac{T}{2}}}
     e^{(s-\sigma)\mu}
     \Bigl(\kappa_*
     \Norm{\xi(\sigma)
     -\eta(\sigma)}_X
     \Norm{X_v(\sigma)}_X
     +\kappa(\rho)
     \Norm{X_v(\sigma)
     -Y_v(\sigma)}_X
     \Bigr) d\sigma
   \\
    &\quad
     +
     s^{\frac{1}{2}}c\kappa(\rho)\biggl(
     \int_{\frac{T}{2}}^T
     e^{(s-\sigma)\mu}
     \underbrace{
       \Norm{X_v(\sigma)}_X
     }_{\le 2c\sigma^{-\frac{1}{2}}
            e^{-\sigma\frac{\mu}{2}}\norm{v}_2}
     d\sigma
     +
     \int_{\frac{T}{2}}^\infty
     e^{(s-\sigma)\mu}
     \underbrace{
       \Norm{Y_v(\sigma)}_X
     }_{\le 2c\sigma^{-\frac{1}{2}}
            e^{-\sigma\frac{\mu}{2}}\norm{v}_2}
     d\sigma\biggr)
   \\
    &\le
     \frac{8}{7}\rho_0 c^2\kappa_*
     e^{-T\frac{\mu}{16}}\Norm{v}_2
     \left(
     \int_0^s
     \frac{e^{-(s-\sigma)\mu}
     e^{-\sigma\frac{\mu}{2}} s^{\frac{1}{2}}}
     {(s-\sigma)^{\frac{3}{4}}
     \sigma^{\frac{1}{2}}}
     \, d\sigma
     +
     \int_s^{\frac{T}{2}}
     \frac{e^{(s-\sigma)\mu}
     e^{-\sigma\frac{\mu}{2}} s^{\frac{1}{2}}}
     {\sigma^{\frac{1}{2}}}
     \, d\sigma
     \right)
   \\
    &\quad
     +c\kappa(\rho)
     \Norm{X_v-Y_v}_*
     \left(
     \int_0^s
     \frac{e^{-(s-\sigma)\mu}
     s^{\frac{1}{2}}}
     {(s-\sigma)^{\frac{3}{4}}
     \sigma^{\frac{1}{2}}}
     \, d\sigma
     +
     \int_s^{\frac{T}{2}}
     \frac{e^{(s-\sigma)\mu}
     s^{\frac{1}{2}}}
     {\sigma^{\frac{1}{2}}}
     \, d\sigma
     \right)
   \\
    &\quad
     +4c^2\kappa(\rho)
     \Norm{v}_2
     \int_{{\frac{T}{2}}}^\infty
     e^{(s-\frac{3}{2}\sigma)\mu}
     \left(\frac{s}{\sigma}\right)
     ^{\frac{1}{2}}
     d\sigma
   \\
    &\le 
     \left(\frac{8}{7}\rho_0 c^2\kappa_*
     \left(\frac{10}{\mu^{1/4}}
     +\frac{1}{\mu}\right)
     +\frac{8 c^2\kappa(\rho)}{3\mu}\right)
     \frac{\Norm{v}_2}{e^{T\frac{\mu}{16}}}
     +c\kappa(\rho)\left(\frac{10}{\mu^{1/4}}
     +\frac{1}{\mu}\right)
     \Norm{X_v-Y_v}_*
   \\
    &\le 
     \Bigl(\frac{1}{4}+\frac{1}{4}\Bigr)
     \Norm{v}_2 e^{-T\frac{\mu}{16}}
     +\frac{1}{4}\Norm{X_v-Y_v}_*.
\end{split}
\end{equation*}
It is a side remark that without
the weight factor $s^{1/2}$ in the
$\norm{\cdot}_*$ norm the integrals involving
$(s-\sigma)^{-3/4}$ cause trouble,
concerning boundedness, for $s$ near zero. 
It is another side remark that
due to the presence of the extra factor 
$\norm{X_v(\sigma)}_X$ we do not
have to cut the interval $[\frac{T}{2},T]$
into two pieces as we did in step~6 above.
Inequality three uses the following estimates.
By~(\ref{eq:3/4}) and by calculation, 
respectively, we obtain
$$
     \int_0^s
     \frac{e^{-(s-\sigma)\mu}}
     {(s-\sigma)^{3/4}}
     \left(\frac{s}{\sigma}\right)^{1/2}
     d\sigma 
     \le 10/\mu^{\frac{1}{4}}
     ,\quad
     \int_s^{\frac{T}{2}}
     e^{(s-\sigma)\mu}
     \left(\frac{s}{\sigma}\right)^{1/2}
     d\sigma
     \le
     \frac{1}{\mu}.
$$
To get the second of these estimates
we used $s/\sigma\le1$.
Again by calculation we get
$$
     \int_{\frac{T}{2}}^\infty
     e^{(s-\frac{3}{2}\sigma)\mu}
     \left(\frac{s}{\sigma}\right)
     ^{\frac{1}{2}}
     d\sigma
     \le
     \int_{\frac{T}{2}}^\infty
     e^{(s-\frac{3}{2}\sigma)\mu}
     d\sigma
     \le
     \frac{2}{3\mu}
     e^{(s-\frac{3}{4}T)\mu}
     \le
     \frac{2}{3\mu}
     e^{-T\frac{\mu}{4}}
$$
since $s\le T/2\le\sigma$.
In the final inequality four use
smallness~(\ref{eq:rho_0-backward}) of $\rho_0$
and~(\ref{eq:rho-backward}) of $\rho$.
Now take the supremum over
$s\in[0,\frac{1}{2}T]$ to obtain
$
     \Norm{X_v-Y_v}_*
     \le
     e^{-T\mu/16}\Norm{v}_2
$.
Together with the estimate for
$\norm{X_v(0)-Y_v(0)}_2$
derived earlier this concludes the
proof of Step~II.
\qed\end{proof}
\noindent
This concludes the proof of Theorem~\ref{thm:uniform-C1}.

\begin{acknowledgements}
For hospitality I would like to thank Universit\"at Bielefeld where foundations were laid. In this respect I am most grateful to Helmut Hofer for the right words in a difficult moment. Many thanks to Andr\'{e} de Carvalho and Pedro Salom\~{a}o for building the bridge to a new continent and, in particular, the excellent research conditions provided by IME USP and FAPESP. Last, not least, the paper would not exist without Dietmar Salamon teaching me for many years his way of solving complex problems. I owe him deeply.
\end{acknowledgements}


\end{document}